\crefname{hypothesis}{Hypothesis}{Hypotheses}
\renewcommand{\vec}[1] {\ensuremath{\boldsymbol{#1}}}
\title{Enabling hyper-differential sensitivity analysis for ill-posed inverse problems \thanks{Submitted to the editors DATE.
\funding{This paper describes objective technical
results and analysis. Any subjective views or opinions that might be
expressed in the paper do not necessarily represent the views of the
U.S. Department of Energy or the United States Government. Sandia
National Laboratories is a multimission laboratory managed and
operated by National Technology and Engineering Solutions of Sandia
LLC, a wholly owned subsidiary of Honeywell International, Inc., for
the U.S. Department of Energy's National Nuclear Security
Administration under contract DE-NA-0003525. SAND2021-6604 O
This work was supported by the US Department of Energy, Office of Advanced Scientific Computing Research, Field Work Proposal 20-023231.}}}
\author{Joseph Hart\thanks{Optimization and Uncertainty Quantification, Sandia National Laboratories, P.O. Box 5800, Albuquerque, NM 87123-1320
  (\email{joshart@sandia.gov}).}
\and Bart van Bloemen Waanders \thanks{Optimization and Uncertainty Quantification, Sandia National Laboratories, P.O. Box 5800, Albuquerque, NM 87123-1320
  (\email{bartv@sandia.gov}).} }
\begin{document}

\maketitle

% REQUIRED
\begin{abstract}
Inverse problems constrained by partial differential equations (PDEs) play a critical role in model development and calibration. In many applications, there are multiple uncertain parameters in a model that must be estimated. However, high dimensionality of the parameters and computational complexity of the PDE solves make such problems challenging. A common approach is to reduce the dimension by fixing some parameters (which we will call auxiliary parameters) to a best estimate and use techniques from PDE-constrained optimization to estimate the other parameters. In this article, hyper-differential sensitivity analysis (HDSA) is used to assess the sensitivity of the solution of the PDE-constrained optimization problem to changes in the auxiliary parameters. Foundational assumptions for HDSA require satisfaction of the optimality conditions which are not always practically feasible as a result of ill-posedness in the inverse problem. We introduce novel theoretical and computational approaches to justify and enable HDSA for ill-posed inverse problems by projecting the sensitivities on likelihood informed subspaces and  defining a posteriori updates. Our proposed framework is demonstrated on a nonlinear multi-physics inverse problem motivated by estimation of spatially heterogenous material properties in the presence of spatially distributed parametric modeling uncertainties. 
\end{abstract}

% REQUIRED
\begin{keywords}
Hyper-differential sensitivity analysis, inverse problems, PDE-constrained optimization
\end{keywords}

% REQUIRED
\begin{AMS}
65K10, 62F15
\end{AMS}

\section{Introduction}
Inverse problems arise in scientific and engineering applications when quantities cannot be directly observed but rather are estimated using observations of  related quantities.  In the case of inverse problems constrained by partial differential equations (PDEs), the state variables (solution of the PDE) are observed at (potentially sparse) locations in space and/or instances in time, and from these observations one seeks to infer parameters in the model. There is a wealth of literature developing theory and computational methods to solve these challenging yet scientifically critical problems.  We refer the interested reader to a small sampling of the literature and the references therein \cite{ghattas_infinite_dim_bayes_1,ghattas_infinite_dim_bayes_2,stuart_inv_prob,Biegler_11}. 

Among the many challenges faced in PDE-constrained inverse problems, data sparsity, high dimensionality, and computational complexity are at the forefront. It is common for PDE-based models of physical processes to have many uncertain parameters, some of which are spatially and/or temporally distributed. Ideally, all uncertain parameters are inverted for simultaneously to characterize dependences between them; however, this is challenging because the high dimensionality of the parameter space induces intractable ill-conditioning. A simplification employed in practice is to fix some of the uncertain parameters to a best estimate which reduces the dimension and improves numerical conditioning. Although pragmatic, this fail to adequately address dependence between the parameters.  

In this article we focus on deterministic inverse problems and consider the sensitivity of the solution to the inverse problem due to perturbations in parameters. %that are fixed to a best estimate.
Although these fixed parameters are sometimes called nuisance parameters in the inverse problems literature, in this article we adopt the term auxiliary parameters for greater generality. For problems with separable structure, variable projection may be used to write the parameter of interest as a function of the auxiliary parameters \cite{varpro_nuisance_parameters}. The error introduced by fixing auxiliary parameters to a nominal estimate may be represented in a Bayesian approximation error approach to incorporate their uncertainty in the inversion \cite{bayes_approx_error_petra,stat_inv_prob_book}. Alternatively, this article uses hyper-differential sensitivity analysis (HDSA) to assess the influence of the auxiliary parameters on the solution of a PDE-constrained optimization problem. By differentiating through optimality conditions, and leveraging adjoint-based derivative calculations along with randomized and matrix free linear algebra, HDSA has been shown to provide a computationally scalable approach to analyze uncertainty in large-scale optimization \cite{HDSA,sunseri_hdsa,saibaba_gsvd}. The resulting sensitivities provide critical information to characterize dependencies between the inversion parameters and fixed auxiliary parameters. This characterization of dependences supports model development, calibration, and data acquisition strategies~\cite{sunseri_hdsa,hdsa_control,Sunseri_2022}.

Large-scale PDE-constrained inverse problems, even with auxiliary parameters fixed, are frequently ill-posed as a result of the parameter dimensionality and data sparsity. This ill-posedness creates theoretical and computational challenges for the HDSA framework \cite{HDSA} since it is build on the foundation of post-optimality sensitivity analysis \cite{Griesse_part_1,shapiro_SIAM_review}. In particular, satisfying first and second order optimality conditions is required for HDSA.  However, such satisfaction may not be easily attained for many inverse problems where prior information is limited, nonlinearities are present, and limited computational resources are available. Failure to satisfy these assumptions prohibits the use of HDSA. This article introduces novel theoretical and computational approaches to enable HDSA for ill-posed PDE-constrained inverse problems.  Our contributions include:
\begin{enumerate}
\item[$\bullet$] defining HDSA on the likelihood informed subspace~\cite{cui_2014}, the low dimensional subspace informed by data, and demonstrating its advantages to combat ill-conditioning due to ill-posedness and data sparsity,
\item[$\bullet$] developing first and second order a posteriori updates to overcome the theoretical limitations of HDSA when optimality conditions are not satisfied, 
\item[$\bullet$] providing an algorithmic and computational framework to apply HDSA to ill-posed inverse problems and demonstrating its effectiveness on a nonlinear multi-physics application.
\end{enumerate}

The article is organized as follows. Section~\ref{sec:background} reviews PDE-constrained inverse problems and HDSA. We introduce HDSA on the likelihood informed subspace in Section~\ref{sec:likelihood_informed} to facilitate a more useful interpretation of the sensitivities for ill-posed inverse problems. A posteriori updates are developed in Section~\ref{sec:enabling_hdsa} to overcome the practical challenges associated slow optimizer convergence which limit the use of HDSA. An algorithmic overview is presented in Section~\ref{sec:alg_overview}. We demonstrate our approach on a multi-physics subsurface flow application in Section~\ref{sec:numerics} with uncertainty in a permeability field, source, boundary conditions, and diffusion coefficient. Section~\ref{sec:conclusion} concludes the article with a perspective on how these developments relate to the Bayesian formulation of the inverse problem and opportunities for future work along these lines.

\section{Background} \label{sec:background}
We consider inverse problems constrained by partial differential equations (PDEs). In many applications, the PDE depends on multiple uncertain parameters; however, inverting for all parameters simultaneously is frequently intractable. Rather, the common scenario in practice is to fix some parameters to a best estimate and only invert for the parameters of greatest interest. To this end, let $\vec{z} \in \mathbb R^m$ be the parameters of interest being inverted for and $\vec{\theta} \in \mathbb R^n$ denote parameters being fixed to a nominal estimate. We will refer to $\vec{\theta}$ as the auxiliary parameters since they are not of primary interest to estimate. Though results in this article are applicable for low dimensional problems, we are generally interested in cases where $\vec{z}$ and/or $\vec{\theta}$ correspond to the discretization of infinite dimensional parameters and hence $m$ and/or $n$ are high dimensional.

Let $F:\mathbb R^m \times \mathbb R^n \to \mathbb R^d$ denote the parameter-to-observable map. Evaluating $F(\vec{z},\vec{\theta})$ involves solving the PDE and then applying an observation operator, i.e. a map from the PDE solution space to the space of observable data. Evaluating $F$ is computationally costly due to the PDE solve. 

 Given data $\vec{d} \in \mathbb R^d$ and a nominal estimate $\overline{\vec{\theta}}$ for the auxiliary parameters, the inverse problem is to determine $\vec{z}$ such that 
$$F(\vec{z},\overline{\vec{\theta}}) \approx \vec{d}.$$
Since observed data is typically noisy and models are imperfect, we do not seek $F(\vec{z},\overline{\vec{\theta}}) = \vec{d}$, but rather that $F(\vec{z},\overline{\vec{\theta}}) - \vec{d}$ is on the same order of magnitude as the noise in the data. Inverse problems are frequently ill-posed in the sense that there are many $\vec{z}$'s, which may be far from one another, such that $F(\vec{z},\overline{\vec{\theta}}) - \vec{d}$ has the same magnitude as the data's noise. This causes considerable challenges when solving the inverse problem.

\subsection{Optimization problem}
We focus on the optimization problem
\begin{align}\label{inv_prob_RS}
& \min\limits_{\vec{z} \in \mathbb R^m} J(\vec{z};\overline{\vec{\theta}}):=M(\vec{z},\overline{\vec{\theta}}) + R(\vec{z})
\end{align}
where 
\begin{eqnarray*}
M(\vec{z},\overline{\vec{\theta}}) = \frac{1}{2} (F(\vec{z},\overline{\vec{\theta}})-\vec{d})^T W (F(\vec{z},\overline{\vec{\theta}})-\vec{d}) 
\end{eqnarray*}
is the data misfit weighted by a symmetric positive definite matrix $W$ and $R(\vec{z})$ is a regularization function.  Since there are typically many $\vec{z}$'s for which $M(\vec{z},\overline{\vec{\theta}})$ is small, the choice of the regularization function $R$ is critical to impose prior knowledge on $\vec{z}$. Though not a focus for this article, the choice of regularization is linked to the prior in a Bayesian inverse problem \cite{isaac_2015,ghattas_infinite_dim_bayes_1,ghattas_infinite_dim_bayes_2}.

Computing the solution of~\eqref{inv_prob_RS} for large-scale inverse problems is computationally challenging since it requires many PDE solves \cite{ghattas_infinite_dim_bayes_1,ghattas_infinite_dim_bayes_2,Biegler_11}. However, techniques from PDE-constrained optimization may be leveraged to facilitate computational efficiency. Techniques include finite element discretization, matrix free linear algebra, adjoint-based derivative computation, and parallel computing. The reader is referred to \cite{Vogel_99, Archer_01,Haber_01,Vogel_02,Biegler_03,Biros_05,Laird_05,Hintermuller_05,Hazra_06,Biegler_07,Borzi_07,Hinze_09,Biegler_11,frontier_in_pdeco} for a sampling of the PDE-constrained optimization literature. 

Solving~\eqref{inv_prob_RS} with auxiliary parameter $\overline{\vec{\theta}}$ provides an estimate of $\vec{z}$.  However, this estimate is not accurate due to uncertainty in $\vec{\theta}$. Ideally, we would solve~\eqref{inv_prob_RS} for many different $\vec{\theta}$'s but because this is computationally prohibitive, we use HDSA to efficiently analyze the dependence of our estimated $\vec{z}$ on the auxiliary parameter $\vec{\theta}$.

\subsection{Hyper-differential sensitivity analysis}
\label{ssec:hdsa_intro}
Through a combination of tools from post-optimality sensitivity analysis, PDE-constrained optimization, and numerical linear algebra, HDSA has provided unique and valuable insights into the dependence of PDE-constrained inverse problems on high dimensional auxiliary parameters \cite{HDSA,sunseri_hdsa,saibaba_gsvd}.
This subsection provides essential background on HDSA. To facilitate our analysis, assume that the objective function in \eqref{inv_prob_RS}, $J:\mathbb R^m \times \mathbb R^n \to \mathbb R$, is twice continuously differentiable with respect to $(\vec{z},\vec{\theta})$. Let $\vec{z}^\star$ be a local minimum of~\eqref{inv_prob_RS} when the auxiliary parameters are fixed to $\overline{\vec{\theta}} \in \mathbb R^n$. A fundamental assumption is that $\vec{z}^\star$ satisfies the well known first and second order optimality conditions:
\begin{enumerate}[label=(A\textnormal{\arabic*})]
\item $\nabla_{\vec{z}} J(\vec{z}^\star;\overline{\vec{\theta}})=0$, \label{A1} 
\item $\nabla_{\vec{z},\vec{z}} J(\vec{z}^\star; \overline{\vec{\theta}})$ is positive definite, \label{A2}
\end{enumerate}
where $\nabla_{\vec{z}} J$ and $\nabla_{\vec{z},\vec{z}} J$ denote the gradient and Hessian of $J$ with respect to $\vec{z}$, respectively. 

Thanks to our assumption on differentiability of $J$, we may apply the Implicit Function Theorem~\cite{krantz_2013} to the first order optimality condition $\nabla_{\vec{z}} J(\vec{z}^\star;\overline{\vec{\theta}})=0$. This gives the existence of a continuously differentiable operator $\mathcal G:\mathcal N(\overline{\vec{\theta}}) \to \mathbb R^m$, defined on a neighborhood $\mathcal N(\overline{\vec{\theta}})$, such that $\nabla_{\vec{z}} J(\mathcal G(\vec{\theta}); \vec{\theta})=0$ for all $\vec{\theta} \in \mathcal N(\overline{\vec{\theta}})$. Assuming that $\nabla_{\vec{z},\vec{z}} J(\mathcal G(\vec{\theta}); \vec{\theta})$ is positive definite for all $\vec{\theta} \in \mathcal N(\overline{\vec{\theta}})$, we may interpret $\mathcal G$ as a map from auxiliary parameters $\vec{\theta}$ to the solution of the optimization problem~\eqref{inv_prob_RS}. 

Furthermore, it follows from the Implicit Function Theorem that the Jacobian of $\mathcal G$, evaluated at $\overline{\vec{\theta}}$, is given by
\begin{eqnarray}
\label{opt_sol_sensitivity}
\mathcal G'(\overline{\vec{\theta}}) = -\mathcal H^{-1} \mathcal B,
\end{eqnarray}
where $\mathcal B=\nabla_{\vec{z},\vec{\theta}} J(\vec{z}^\star,\overline{\vec{\theta}})$ denotes the Jacobian of $\nabla_{\vec{z}} J$ with respect to $\vec{\theta}$, and $\mathcal H=\nabla_{\vec{z},\vec{z}} J(\vec{z}^\star,\overline{\vec{\theta}})$ denotes the Hessian of $J$ with respect to $\vec{z}$, each evaluated at $\vec{z}=\vec{z}^\star$ and $\vec{\theta}=\overline{\vec{\theta}}$. Hence we may interpret $\mathcal G'(\overline{\vec{\theta}}) \vec{\theta}_0$ as the change in our estimate of $\vec{z}$ when $\overline{\vec{\theta}}$ is perturbed in the direction $\vec{\theta}_0$.

Note that HDSA may be formally developed for infinite dimensional problems in a full space optimization framework. The reader is directed to \cite{HDSA} for additional details. Also note that $\mathcal G'(\overline{\vec{\theta}})$ is a local sensitivity around $\overline{\vec{\theta}}$. Global sensitivities may be considered, see~\cite{HDSA}, but are beyond the scope of this article.

\section{Projection on the likelihood informed subspace}
\label{sec:likelihood_informed}
For ill-posed inverse problems, it is common that $\mathcal H$ has small eigenvalues, or equivalently, $\mathcal H^{-1}$ has large eigenvalues. If the span of the eigenvectors corresponding to these eigenvalues intersects the range of $\mathcal B$, then $\mathcal G'(\overline{\vec{\theta}})$ will be large in those directions. However, changes in the optimal solution along directions which are poorly informed by the data do not provide physical insight into the relationship between $\vec{z}$ and $\vec{\theta}$. Rather, such uninformed directions can be analyzed in the posterior distribution of $\vec{z}$. We focus on sensitivities which provide physical insight informed by the data. This motives us to define a projector onto the likelihood informed subspace (LIS)~\cite{cui_2014}. The LIS is defined by $r$ largest positive eigenvectors of the generalized eigenvalue problem 
\begin{eqnarray}
\label{eqn:likelihood_informed_gen_eig}
\mathcal H_M \vec{v}_j = \lambda_j \mathcal H_R \vec{v}_j, \qquad j=1,2,\dots,r,
\end{eqnarray} 
where
\begin{eqnarray*}
 \mathcal H_M=\nabla_{\vec{z},\vec{z}}^2 M(\vec{z}^\star,\overline{\vec{\theta}}) \qquad \text{and} \qquad \mathcal H_R = \nabla_{\vec{z},\vec{z}}^2 R(\vec{z}^\star)
 \end{eqnarray*} 
  are the misfit Hessian and regularization Hessian, respectively. These dominant eigenvectors coincide with the directions in parameter space which are informed by the misfit more than the regularization. This may be observed by multiplying \eqref{eqn:likelihood_informed_gen_eig} by $\vec{v}_j^T$ and solving for the Rayleigh quotient
\begin{eqnarray}
\label{eqn:eig_interpretation}
\lambda_j = \frac{\vec{v}_j^T\mathcal H_M \vec{v}_j}{\vec{v}_j^T \mathcal H_R \vec{v}_j}, \qquad j=1,2,\dots,r.
\end{eqnarray} 
The eigenvalues measure the ratio of contributions from the misfit and regularization in the directions of the eigenvectors. A ``global LIS" may be defined by computing the expected misfit Hessian with respect to $\vec{z}$. However, since HDSA is local about $\vec{z}^\star$ our LIS definition provides a useful subspace for what follows.
   
We define the projector 
\begin{eqnarray}
\label{eq:projector}
\mathcal P = \mathcal V \mathcal V^T \mathcal H_R
\end{eqnarray}
where the columns of $\mathcal V=[\vec{v}_1,\vec{v}_2,\dots,\vec{v}_r]$ are the eigenvectors of \eqref{eqn:likelihood_informed_gen_eig} normalized so that $\vec{v}_j^T \mathcal H_R \vec{v}_j=1$, $j=1,2,\dots,r$. This corresponds to computing hyper-differential sensitivities in the directions which are informed more by the misfit rather than the regularization. The truncation rank $r$ is chosen by the user by leveraging the interpretation of the eigenvalues as the ratio of misfit and regularization. For ill-posed inverse problems, $r$ is typically small as a result of data sparsity and dissipative physics. 
 
To associate sensitivity with individual parameters we define the hyper-differential sensitivity indices 
\begin{eqnarray}
\label{sensitivity_indices}
S_i = \vert \vert \mathcal P \mathcal G'(\overline{\vec{\theta}}) \vec{e}_i \vert \vert_{W_{\vec{z}}} = \vert \vert \mathcal P \mathcal H^{-1} \mathcal B \vec{e}_i \vert \vert_{W_{\vec{z}}} \qquad i=1,2,\dots,n,
\end{eqnarray}
where $\vec{e}_i \in \mathbb R^n$ is the $i^{th}$ canonical basis vector which has 1 in its $i^{th}$ entry and 0 in all others. The norm $\vert \vert \cdot \vert \vert_{W_{\vec{z}}}$ is defined by a symmetric positive definite weighting matrix $W_{\vec{z}} \in \mathbb R^{m \times m}$ for generality. This is important if $\vec{z}$ corresponds to a discretization of a function as $W_{\vec{z}}$ encodes the inner products from the infinite dimensional space. We interpret $S_i$ as the change in the LIS projection of the solution if the $i^{th}$ auxiliary parameter is perturbed.
 
 Computing $\mathcal P \mathcal H^{-1}$ as in~\eqref{sensitivity_indices} corresponds to spectral regularization of the Hessian to eliminate ill-conditioning due to the uninformed directions. Theorem~\ref{thm:sen_indices_likelihood_informed} provides an expression for the sensitivity indices which only depends on the $r$ leading generalized eigenvalues and eigenvectors in \eqref{eqn:likelihood_informed_gen_eig}. This is computationally adventageous for ill-posed inverse problems since the leading generalized eigenvalues and eigenvectors of \eqref{eqn:likelihood_informed_gen_eig} can be computed efficiently. The computational benefit is significant for large-scale applications where inverting $\mathcal H$ without preconditioning would take many conjugate gradient iterations to converge.
\begin{theorem}
\label{thm:sen_indices_likelihood_informed}
If $\mathcal H$ is positive definite then the LIS-hyper-differential sensitivities $S_i$, $i=1,2,\dots,n$, (where $\mathcal P$ is defined by~\eqref{eq:projector}) are given by
\begin{eqnarray}
S_i = \left\vert \left\vert \mathcal P \mathcal H^{-1} \mathcal B \vec{e}_i \right\vert \right\vert_{W_{\vec{z}}} = \sqrt{\sum\limits_{k=1}^r \sum\limits_{j=1}^r \left( \frac{\vec{v}_j^T \mathcal B \vec{e}_i }{1+\lambda_j} \right) \left( \frac{\vec{v}_k^T \mathcal B \vec{e}_i }{1+\lambda_k} \right) \vec{v}_k^T W_{\vec{z}}\vec{v}_j}. \label{eqn:sen_indices_formula}
\end{eqnarray}
\end{theorem}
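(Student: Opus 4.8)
The plan is to exploit the simultaneous diagonalization of the misfit and regularization Hessians furnished by the generalized eigenvalue problem~\eqref{eqn:likelihood_informed_gen_eig}, and then to show that the projector $\mathcal P$ collapses the spectral expansion of $\mathcal H^{-1}$ from the full parameter space onto the $r$ retained directions. First I would record the structural identity $\mathcal H = \mathcal H_M + \mathcal H_R$, which follows from $J = M + R$. Taking $\mathcal H_R$ to be symmetric positive definite, as is standard for a regularization Hessian, the problem $\mathcal H_M \vec{v} = \lambda \mathcal H_R \vec{v}$ is equivalent to the symmetric problem $\mathcal H_R^{-1/2}\mathcal H_M \mathcal H_R^{-1/2}\vec{w} = \lambda \vec{w}$, so there exists a complete set of generalized eigenvectors $\{\vec{v}_j\}_{j=1}^m$, normalized by $\vec{v}_k^T \mathcal H_R \vec{v}_j = \delta_{kj}$, that simultaneously diagonalize $\mathcal H_M$ and $\mathcal H_R$.

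The key observation I would then establish is that these same eigenvectors diagonalize $\mathcal H$: adding $\mathcal H_R \vec{v}_j$ to both sides of~\eqref{eqn:likelihood_informed_gen_eig} gives $\mathcal H \vec{v}_j = (1 + \lambda_j)\mathcal H_R \vec{v}_j$, whence $\vec{v}_k^T \mathcal H \vec{v}_j = (1+\lambda_j)\delta_{kj}$. This is precisely where the hypothesis that $\mathcal H$ is positive definite enters: it guarantees $1 + \lambda_j > 0$ for every $j$, so that $\mathcal H^{-1}$ exists and the denominators in~\eqref{eqn:sen_indices_formula} never vanish. Collecting the full eigenbasis into $V = [\vec{v}_1, \dots, \vec{v}_m]$, I obtain $V^T \mathcal H_R V = I$, hence $\mathcal H^{-1} = V(I+\Lambda)^{-1}V^T$ with $\Lambda = \diag(\lambda_1,\dots,\lambda_m)$.

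The crux of the argument is to show that $\mathcal P \mathcal H^{-1}$ reduces to a sum over only the $r$ retained modes. Writing $\mathcal V = V E_r$, where $E_r$ is the $m \times r$ matrix selecting the first $r$ columns, I would substitute the expressions above into $\mathcal P \mathcal H^{-1} = \mathcal V \mathcal V^T \mathcal H_R \mathcal H^{-1}$ and use the $\mathcal H_R$-orthonormality in the form $\mathcal V^T \mathcal H_R V = E_r^T$ to cancel both the off-diagonal and the unretained contributions. This collapses the identity to
\begin{eqnarray*}
\mathcal P \mathcal H^{-1} = \mathcal V (I + \Lambda_r)^{-1} \mathcal V^T = \sum_{j=1}^r \frac{1}{1+\lambda_j}\vec{v}_j \vec{v}_j^T,
\end{eqnarray*}
where $\Lambda_r = \diag(\lambda_1,\dots,\lambda_r)$. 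I expect this collapse to be the main obstacle, since it requires careful bookkeeping of the generalized eigenvectors: the \emph{full} basis must be invoked to represent $\mathcal H^{-1}$, while one must simultaneously show that $\mathcal P$ annihilates every direction outside $\mathrm{span}\{\vec{v}_1,\dots,\vec{v}_r\}$, leaving exactly the $(1+\lambda_j)^{-1}$ weights.

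Finally I would apply this operator to $\mathcal B \vec{e}_i$, obtaining $\mathcal P \mathcal H^{-1}\mathcal B \vec{e}_i = \sum_{j=1}^r \frac{\vec{v}_j^T \mathcal B \vec{e}_i}{1+\lambda_j}\vec{v}_j$, and compute its squared $W_{\vec{z}}$-norm by expanding the inner product into the double sum $\sum_{j=1}^r \sum_{k=1}^r \frac{(\vec{v}_j^T \mathcal B \vec{e}_i)(\vec{v}_k^T \mathcal B \vec{e}_i)}{(1+\lambda_j)(1+\lambda_k)}\vec{v}_j^T W_{\vec{z}}\vec{v}_k$. Taking the square root and relabeling indices then yields~\eqref{eqn:sen_indices_formula}, completing the proof.
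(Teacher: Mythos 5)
Your proposal is correct and follows essentially the same route as the paper's proof: expand in the $\mathcal H_R$-orthonormal generalized eigenbasis of \eqref{eqn:likelihood_informed_gen_eig}, observe that $\mathcal P \vec{v}_k = \vec{v}_k$ for $k \le r$ and $\mathcal P \vec{v}_k = 0$ for $k > r$ so that $\mathcal P \mathcal H^{-1}$ collapses to $\sum_{j=1}^r (1+\lambda_j)^{-1}\vec{v}_j\vec{v}_j^T$, and then expand the squared $W_{\vec{z}}$-norm of $\mathcal P\mathcal H^{-1}\mathcal B\vec{e}_i$ into the double sum. The only difference is the intermediate representation of $\mathcal H^{-1}$: the paper writes $\mathcal H = \mathcal H_R + \sum_{j}\lambda_j \mathcal H_R\vec{v}_j\vec{v}_j^T\mathcal H_R$ and invokes Sherman--Morrison--Woodbury to get $\mathcal H^{-1} = \mathcal H_R^{-1} - \sum_{j}\frac{\lambda_j}{1+\lambda_j}\vec{v}_j\vec{v}_j^T$, whereas you diagonalize $\mathcal H$ directly via $\mathcal H\vec{v}_j = (1+\lambda_j)\mathcal H_R\vec{v}_j$, giving $\mathcal H^{-1} = V(I+\Lambda)^{-1}V^T$ --- the two coincide since $\mathcal H_R^{-1} = VV^T$, and your variant has the modest merits of sidestepping the $\lambda_j = 0$ edge case implicit in the Woodbury inverse $\Lambda^{-1}$ and of making explicit where positive definiteness of $\mathcal H$ is used, namely $1+\lambda_j > 0$ (cf. Theorem~\ref{thm:eig_compare}).
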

For conciseness and clarity of the presentation, proofs for Theorem~\ref{thm:sen_indices_likelihood_informed} and all subsequent theorems are given in the appendix. Theorem~\ref{thm:sen_indices_likelihood_informed} is similar in concept to using a low rank approximation of the prior preconditioned misfit hessian~\cite{ghattas_infinite_dim_bayes_2}. 

\section{Enabling HDSA for ill-posed inverse problems} \label{sec:enabling_hdsa}
For ill-posed inverse problems, it is frequently the case that a user does not solve~\eqref{inv_prob_RS} to optimality, i.e. the solution may fail to satisfy the first and/or second order optimality conditions. This lack of optimality occurs for a variety of interrelated reasons:
\begin{enumerate}
\item[$\bullet$] The objective function $J$ is flat in directions which are not well informed by the data and lacks strong regularization to overcome this uncertainty.
\item[$\bullet$] The Hessian is ill-conditioned since the directions along which the objective function is flat corresponds to small eigenvalues. Hence, numerical optimization schemes exhibit slow convergence around the local minimum as a result of error in second order information (for instance, error in the linear solve of a Newton step because of ill-conditioning).  
\item[$\bullet$] Due to limited prior information, solving to optimality (which is computationally intensive) is unnecessary.
\end{enumerate}
A common observation in ill-posed problems is that the first few iterations of a numerical optimization routine is significantly informed by the data with a corresponding decrease in the objective function $J$, followed by potentially many time consuming iterations that produce small changes in the objective as the optimizers moves in the directions where $J$ is flat. Hence, it is advantageous for a user to terminate the optimization routine prematurely yielding a $\vec{z}^\star$ which fails to satisfy Assumption~\ref{A1}, and possibly Assumption~\ref{A2}. 

These observations pose significant theoretical and numerical challenges when applying HDSA to ill-posed inverse problems. The optimality conditions required to define and interpret the operator $\mathcal G'$ in~\eqref{opt_sol_sensitivity} are not satisfied. This calls into question the theoretical validity of HDSA in this context and begs the question how it may be generalized. In this section we propose ``a posteriori updates" to provide a theoretical foundation needed to justify HDSA when optimality conditions are not satisfied. In particular, we introduce first and second order updates when Assumptions~\ref{A1} and~\ref{A2} are not satisfied, and demonstrate that these updates introduce negligible computational overhead and the resulting inferences are robust to perturbations of the updates.

\subsection{First order a posteriori update}
\label{subsec:first_order_update}
Assume that an optimization routine has been used to solve~\eqref{inv_prob_RS} and was terminated before convergence yielding a solution $\vec{z}^\star$ for which $\nabla_{\vec{z}} J(\vec{z}^\star;\overline{\vec{\theta}}) \ne 0$, i.e. it fails to satisfy  Assumption~\ref{A1}. For this subsection we will assume that Assumption~\ref{A2} is satisfied. Subsection~\ref{subsec:second_order_update} considers the case when it is not. 

One may compute the expression for the hyper-differential sensitivity indices~\eqref{sensitivity_indices}; however, it lacks theoretical justification since the derivation of~\eqref{sensitivity_indices} is based on the assumption that $\vec{z}^\star$ satisfies the first order optimality condition, Assumption~\ref{A1}. To facilitate HDSA we propose to ``update" the objective function so that $\vec{z}^\star$ is a local minimum of the updated objective. This corresponds to solving a perturbed inverse problem which is justified if the change in the problem is only associated with directions which are poorly informed by the data. We will modify the regularization and show that the HDSA results are robust since we project onto the likelihood informed subspace. In other words, the lack of theoretical validity arising from failure to meet the first order optimality condition is alleviated  when sub-optimality occurs in uninformed directions.

We propose an additive update to the regularization (replacing $R$ with $R+\tilde{R}$). In other words, the perturbed inverse problem has the same misfit objective and an updated regularization. To facilitate our analysis, we require that the regularization update $\tilde{R}$ be quadratic, convex, and non-negative. To ensure that the perturbed inverse problem is ``close" to the original problem, we seek a minimum norm function $\tilde{R}$ for which $\vec{z}^\star$ is a stationary point of the updated objective $\tilde{J}:=J+\tilde{R}$. We solve
\begin{align}
\label{R_update_gen_opt}
& \min\limits_{\tilde{R} \in Q} \vert \vert \tilde{R} \vert \vert_{L^1(\mu)} \\
\text{s.t.} \ & \nabla_{\vec{z}} \tilde{R}(\vec{z}^\star) = -\nabla_{\vec{z}} J(\vec{z}^\star;\overline{\vec{\theta}}) \nonumber
\end{align}
where
\begin{eqnarray*}
Q = \{\tilde{R}:\mathbb R^m \to \mathbb R \vert \tilde{R} \ge 0, \text{ } \tilde{R} \text{ is quadratic, } \tilde{R} \text{ is convex} \}
\end{eqnarray*}
is the set of candidate updates and $\vert \vert \tilde{R} \vert \vert_{L^1(\mu)}$ is the $L^1$ norm with respect to a Gaussian measure $\mu$ with mean $\vec{z}^\star$ and covariance matrix $\alpha^2 I$. This choice of norm emphasizes the size of $\tilde{R}$ around the solution $\vec{z}^\star$ with weighting from the covariance matrix $\alpha^2 I$ having length scale $\alpha$ in all directions. We assume $\vec{z}$ has been scaled appropriately and define $\alpha$ as one half the distance between the minimum and maximum values in $\vec{z}^\star$. Taking small values of $\alpha$ results in only considering characteristics of $\tilde{R}$ in a small neighborhood of $\vec{z}^\star$ while taking very large values of $\alpha$ results in finding an update $\tilde{R}$ which is very flat (nearly a constant function). Later in the article (Theorem~\ref{thm:vary_alpha}) we will revisit the choice of $\alpha$ and show that our proposed framework is robust with respect to perturbations of $\alpha$. More general measures and norms on $\tilde{R}$ may be considered; however, this $L^1$ norm with a Gaussian measure is preferred because of its clear interpretation and amenability for analysis.

To enable computational efficiency and rigorous analysis, a closed form solution of~\eqref{R_update_gen_opt} is given by Theorem~\ref{thm:optimal_update_1}. The proof (given in the appendix) leverages our choice of the $L^1$ norm and properties of the Gaussian measure (closed form expression for the expectation of quadratic functions) to transform the function space optimization problem into a linear algebra problem which may be analyzed using the spectral decomposition. 
\begin{theorem}
\label{thm:optimal_update_1}
The global minimizer of~\eqref{R_update_gen_opt} is given by
\begin{eqnarray}
\label{opt_R}
\tilde{R}(\vec{z}) = \frac{\alpha}{2} \vert \vert \vec{g} \vert \vert_2 - (\vec{z}-\vec{z}^\star)^T\vec{g}  + \frac{1}{2} (\vec{z}-\vec{z}^\star)^T \frac{1}{\alpha \vert \vert \vec{g} \vert \vert_2 } \vec{g} \vec{g}^T(\vec{z}-\vec{z}^\star),
\end{eqnarray}
where $\vec{g}=\nabla_{\vec{z}} J(\vec{z}^\star;\overline{\vec{\theta}}) \ne 0$ is the gradient of $J$ evaluated at $\vec{z}^\star$ and $\overline{\vec{\theta}}$.
\end{theorem}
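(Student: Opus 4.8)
The plan is to reduce the function-space optimization over $Q$ to a finite-dimensional problem in the coefficients of a quadratic, exploiting that the non-negativity constraint turns the $L^1(\mu)$ norm into an ordinary expectation. First I would write a generic element of $Q$ as $\tilde{R}(\vec{z}) = c + \vec{b}^T(\vec{z}-\vec{z}^\star) + \frac{1}{2}(\vec{z}-\vec{z}^\star)^T A (\vec{z}-\vec{z}^\star)$ with $c \in \mathbb{R}$, $\vec{b} \in \mathbb{R}^m$, and $A \in \mathbb{R}^{m \times m}$ symmetric. Convexity of $\tilde{R}$ is then equivalent to $A \succeq 0$, and since $\nabla_{\vec{z}} \tilde{R}(\vec{z}) = \vec{b} + A(\vec{z}-\vec{z}^\star)$, the constraint $\nabla_{\vec{z}} \tilde{R}(\vec{z}^\star) = -\vec{g}$ forces $\vec{b} = -\vec{g}$, eliminating $\vec{b}$ from the problem.

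Next I would rewrite the objective. Because every feasible $\tilde{R}$ is non-negative, $\vert \tilde{R} \vert = \tilde{R}$ pointwise, so $\vert \vert \tilde{R} \vert \vert_{L^1(\mu)} = \int \tilde{R} \, d\mu = \mathbb{E}_\mu[\tilde{R}]$. Using the Gaussian moments $\mathbb{E}_\mu[\vec{z}-\vec{z}^\star]=\vec{0}$ and $\mathbb{E}_\mu[(\vec{z}-\vec{z}^\star)(\vec{z}-\vec{z}^\star)^T]=\alpha^2 I$, this collapses to
\[
\mathbb{E}_\mu[\tilde{R}] = c + \frac{\alpha^2}{2} \operatorname{tr}(A).
\]
I would then dispose of the pointwise non-negativity constraint. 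With $\vec{b}=-\vec{g}$ and $A \succeq 0$, the quadratic is bounded below only if $\vec{g} \in \operatorname{range}(A)$ (otherwise $\tilde{R}$ decreases without bound along a null direction of $A$ not orthogonal to $\vec{g}$), in which case its global minimum equals $c - \frac{1}{2} \vec{g}^T A^{+} \vec{g}$, where $A^{+}$ is the Moore--Penrose pseudoinverse. Hence $\tilde{R} \ge 0$ is equivalent to $c \ge \frac{1}{2} \vec{g}^T A^{+} \vec{g}$, and since the objective is increasing in $c$ the optimal choice is $c = \frac{1}{2} \vec{g}^T A^{+} \vec{g}$.

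The problem now reduces to the matrix program of minimizing $\frac{1}{2} \vec{g}^T A^{+} \vec{g} + \frac{\alpha^2}{2} \operatorname{tr}(A)$ over $A \succeq 0$ with $\vec{g} \in \operatorname{range}(A)$, which I expect to be the crux. I would lower bound it by chaining two elementary estimates: the variational identity $\vec{g}^T A^{+} \vec{g} = \max_{\vec{x} \in \operatorname{range}(A)} (\vec{x}^T \vec{g})^2 / (\vec{x}^T A \vec{x}) \ge \vert \vert \vec{g} \vert \vert_2^4 / (\vec{g}^T A \vec{g})$ (taking $\vec{x}=\vec{g}$), together with $\vec{g}^T A \vec{g} \le \lambda_{\max}(A) \vert \vert \vec{g} \vert \vert_2^2 \le \operatorname{tr}(A) \vert \vert \vec{g} \vert \vert_2^2$. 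Writing $t = \operatorname{tr}(A)$, this yields the scalar lower bound $\vert \vert \vec{g} \vert \vert_2^2/(2t) + \alpha^2 t / 2$, which is minimized at $t = \vert \vert \vec{g} \vert \vert_2 / \alpha$ with value $\alpha \vert \vert \vec{g} \vert \vert_2$.

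Finally I would exhibit the rank-one matrix $A = \frac{1}{\alpha \vert \vert \vec{g} \vert \vert_2} \vec{g} \vec{g}^T$ and verify it attains this bound: it satisfies $\vec{g} \in \operatorname{range}(A)$, has $\operatorname{tr}(A) = \vert \vert \vec{g} \vert \vert_2 / \alpha$, turns every inequality above into an equality (because $\vec{g}$ is then an eigenvector of $A$), and gives $c = \frac{1}{2} \vec{g}^T A^{+} \vec{g} = \frac{\alpha}{2} \vert \vert \vec{g} \vert \vert_2$. Substituting this $c$, $\vec{b}=-\vec{g}$, and $A$ into the quadratic form reproduces exactly the expression~\eqref{opt_R} claimed for $\tilde{R}$, and the matching lower and upper bounds certify global optimality. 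The main obstacle is the matrix minimization: establishing that the optimizer is rank-one and handling the singular case through the pseudoinverse carefully, rather than assuming $A$ is invertible, both in the range/non-negativity conditions and in the variational identity.
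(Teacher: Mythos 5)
Your proposal is correct, and it follows the paper's reduction phase exactly: parameterizing the quadratic, forcing $\vec{b}=-\vec{g}$ from the gradient constraint, using non-negativity to turn the $L^1(\mu)$ norm into the Gaussian expectation $c+\tfrac{\alpha^2}{2}\operatorname{tr}(A)$, noting that boundedness below requires $\vec{g}\in\operatorname{range}(A)$, and eliminating the constant via $c=\tfrac{1}{2}\vec{g}^T A^{+}\vec{g}$ --- this is precisely the paper's derivation of its matrix program (its $f(\mathcal{A})=\alpha^2\operatorname{Tr}(\mathcal{A})+\vec{g}^T\mathcal{A}^{\dagger}\vec{g}$ is twice your objective). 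Where you genuinely diverge is the crux, the matrix minimization. The paper attacks it by spectral decomposition through a chain of three lemmas that successively pin down the optimizer's eigenstructure ($\vec{g}$ lies in the leading eigenspace, the remaining eigenvalues vanish, the leading eigenvalue is simple with eigenvector $\pm\vec{g}/\|\vec{g}\|_2$), reducing to a scalar problem in the surviving eigenvalue. You instead produce a global lower bound by chaining the variational inequality $\vec{g}^T A^{+}\vec{g}\ge \|\vec{g}\|_2^4/(\vec{g}^T A\vec{g})$ (valid since $\vec{g}\in\operatorname{range}(A)$ with $A\succeq 0$ and $\vec{g}\ne 0$ forces $\vec{g}^T A\vec{g}>0$) with $\vec{g}^T A\vec{g}\le\lambda_{\max}(A)\|\vec{g}\|_2^2\le\operatorname{tr}(A)\|\vec{g}\|_2^2$, minimize the resulting scalar function of $t=\operatorname{tr}(A)$ to get the value $\alpha\|\vec{g}\|_2$, and then verify the rank-one candidate attains it; all steps check out, including $c=\tfrac{\alpha}{2}\|\vec{g}\|_2$ for that candidate. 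Your route is shorter and more elementary, and the matching upper and lower bounds give a clean certificate that the stated $\tilde{R}$ is a global minimizer, which is all the theorem literally asserts. What the paper's structural route buys is a characterization of the optimizer itself, so uniqueness comes essentially for free from the lemmas; your argument as written certifies optimality but not uniqueness, though this is a one-line addendum in your framework: equality in $\lambda_{\max}(A)\le\operatorname{tr}(A)$ for a nonzero PSD matrix forces $A$ to be rank one, equality in the Cauchy--Schwarz step forces $\operatorname{range}(A)=\operatorname{span}\{\vec{g}\}$, and the scalar minimizer $t=\|\vec{g}\|_2/\alpha$ is unique, recovering exactly the paper's matrix.
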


The form~\eqref{opt_R} of the optimal update $\tilde{R}$ is unsurprising in that its Hessian is proportional to the outer product of the nonzero gradient with itself, ensuring positive curvature in the gradient direction while having zero curvature in all other directions. The following observations:
\begin{enumerate}
\item[$\bullet$] $\tilde{R}(\vec{z}^\star)=\frac{\alpha}{2} \vert \vert \vec{g} \vert \vert_2$,
\item[$\bullet$] the maximum of $\tilde{R}$ in a size $\alpha$ ball around $\vec{z}^\star$ is $2 \alpha \vert \vert \vec{g} \vert \vert_2$,
\item[$\bullet$] the norm of $\tilde{R}$ is given by $\vert \vert \tilde{R} \vert \vert_{L^1(\mu)} = \alpha \vert \vert \vec{g} \vert \vert_2$,
\end{enumerate}
demonstrate that $\tilde{R}$ is well behaved as a function in the sense that it does not vary rapidly. Furthermore, its global minima are 
$$\vec{z}^\star+\alpha \frac{\vec{g}}{\vert \vert \vec{g} \vert \vert_2}+\vec{u}$$
 where $(\vec{u},\vec{g})_2=0$, hence, it is not overfitting to $\vec{z}^\star$ since the minima of $\tilde{R}$ may be far from $\vec{z}^\star$.

By adding $\tilde{R}$, we define the perturbed optimization problem 
\begin{align} 
\label{inv_prob_RS_pert}
& \min\limits_{\vec{z} \in \mathbb R^m} \tilde{J}(\vec{z};\vec{\theta}):=J(\vec{z};\vec{\theta})+ \tilde{R}(\vec{z}),
\end{align}
for which $\vec{z}^\star$ is a local minima when $\vec{\theta}=\overline{\vec{\theta}}$. Assuming that $\nabla_{\vec{z},\vec{z}} \tilde{J}(\vec{z}^\star;\overline{\vec{\theta}})$ is positive definite (subsection~\ref{subsec:second_order_update} considers when it is not), we may compute and interpret hyper-differential sensitivity indices for the perturbed optimization problem whose solution coincides with the solution of~\eqref{inv_prob_RS_pert}. 

It is possible to compute ``sensitivities" (which are not theoretically justified) using \eqref{sensitivity_indices} or \eqref{eqn:sen_indices_formula} for the original optimization problem~\eqref{inv_prob_RS} despite failures to meet optimality conditions. This raises the question: if sensitivity indices are computed for the original problem ignoring the fact that the gradient norm is nonzero, how much will they differ from the sensitivity indices computed for the perturbed problem~\eqref{inv_prob_RS_pert}, which are theoretically sound? Theorem~\ref{thm:compare_S_and_tilde_S} answers this question.

\begin{theorem}
\label{thm:compare_S_and_tilde_S}
Let $\mathcal H$ and $\tilde{\mathcal H}$ denote the Hessian of $J$ and $\tilde{J}$ with respect to $\vec{z}$, evaluated at $\vec{z}^\star$ and $\overline{\vec{\theta}}$, respectively, and $\mathcal B$ and $\tilde{\mathcal B}$ denote the Jacobian of $\nabla_{\vec{z}} J$ and $\nabla_{\vec{z}} \tilde{J}$, with respect to $\vec{\theta}$, evaluated at $\vec{z}^\star$ and $\overline{\vec{\theta}}$, respectively. Assuming that $\mathcal H$ is positive definite, the quantities
\begin{eqnarray*}
S_i = \left\vert \left\vert \mathcal P \mathcal H^{-1} \mathcal B \vec{e}_i \right\vert \right\vert_{W_{\vec{z}}} \qquad \text{ and } \qquad \tilde{S_i} = \left\vert \left\vert \mathcal P \tilde{\mathcal H}^{-1} \tilde{\mathcal B} \vec{e}_i \right\vert \right\vert_{W_{\vec{z}}} \qquad i=1,2,\dots,n,
\end{eqnarray*}
satisfy
\begin{eqnarray*}
\frac{\vert \tilde{S}_i - S_i \vert }{ \left\vert \left\vert \mathcal H^{-1} \mathcal B \vec{e}_i \right\vert \right\vert_{W_{\vec{z}}}} \le \frac{\vert \vert \mathcal P \vec{n} \vert \vert_{W_{\vec{z}}} }{\vec{s}^T \vec{n} + \alpha},
\end{eqnarray*}
where 
\begin{eqnarray*}
 \qquad \vec{s} = -\frac{\vec{g}}{\vert \vert \vec{g} \vert \vert_2} \qquad \text{and} \qquad \vec{n} = -\mathcal H^{-1} \vec{g}, \qquad \vec{g}=\nabla_{\vec{z}} J(\vec{z}^\star;\overline{\vec{\theta}}).
\end{eqnarray*}
\end{theorem}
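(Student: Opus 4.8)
The plan is to use the very explicit structure of the regularization update $\tilde{R}$ from Theorem~\ref{thm:optimal_update_1}, which is quadratic in $\vec{z}$ and independent of $\vec{\theta}$, so that passing from $J$ to $\tilde{J}=J+\tilde{R}$ perturbs the relevant operators in a tightly controlled, rank-one fashion. First I would differentiate $\tilde{J}$ twice. Since $\tilde{R}$ does not depend on $\vec{\theta}$, the mixed derivative $\nabla_{\vec{z},\vec{\theta}}\tilde{R}$ vanishes, giving $\tilde{\mathcal{B}}=\mathcal{B}$; and since the Hessian of $\tilde{R}$ is the constant matrix $\tfrac{1}{\alpha\|\vec{g}\|_2}\vec{g}\vec{g}^T$, the perturbed Hessian is the rank-one update $\tilde{\mathcal{H}}=\mathcal{H}+\tfrac{1}{\alpha\|\vec{g}\|_2}\vec{g}\vec{g}^T$. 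Because this update is positive semidefinite, $\tilde{\mathcal{H}}$ inherits positive definiteness from $\mathcal{H}$, so no separate second order assumption is needed and $\tilde{S}_i$ is well defined.

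The heart of the argument is to invert the rank-one update by the Sherman--Morrison formula. Writing $\mathcal{H}^{-1}\vec{g}=-\vec{n}$ and recognizing that $\vec{s}^T\vec{n}=\vec{g}^T\mathcal{H}^{-1}\vec{g}/\|\vec{g}\|_2$, the Sherman--Morrison denominator collapses to $(\alpha+\vec{s}^T\vec{n})/\alpha$ and the correction term simplifies so that
\begin{eqnarray*}
\tilde{\mathcal{H}}^{-1}=\mathcal{H}^{-1}-\frac{\vec{n}\vec{n}^T}{\|\vec{g}\|_2(\alpha+\vec{s}^T\vec{n})}.
\end{eqnarray*}
Applying $\mathcal{P}(\cdot)\mathcal{B}\vec{e}_i$ and using $\tilde{\mathcal{B}}=\mathcal{B}$, the two sensitivity vectors differ only by a scalar multiple of the single vector $\mathcal{P}\vec{n}$, namely $\mathcal{P}\tilde{\mathcal{H}}^{-1}\mathcal{B}\vec{e}_i-\mathcal{P}\mathcal{H}^{-1}\mathcal{B}\vec{e}_i=-\tfrac{\vec{n}^T\mathcal{B}\vec{e}_i}{\|\vec{g}\|_2(\alpha+\vec{s}^T\vec{n})}\mathcal{P}\vec{n}$.

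From here I would control $|\tilde{S}_i-S_i|$ with the reverse triangle inequality for the $\|\cdot\|_{W_{\vec{z}}}$ norm, $|\tilde{S}_i-S_i|\le\|\mathcal{P}(\tilde{\mathcal{H}}^{-1}-\mathcal{H}^{-1})\mathcal{B}\vec{e}_i\|_{W_{\vec{z}}}$, whose right-hand side factors exactly as $\tfrac{|\vec{n}^T\mathcal{B}\vec{e}_i|}{\|\vec{g}\|_2(\alpha+\vec{s}^T\vec{n})}\|\mathcal{P}\vec{n}\|_{W_{\vec{z}}}$. Dividing through by $\|\mathcal{H}^{-1}\mathcal{B}\vec{e}_i\|_{W_{\vec{z}}}$, the claimed estimate reduces to showing that the dimensionless factor $|\vec{n}^T\mathcal{B}\vec{e}_i|/\bigl(\|\vec{g}\|_2\,\|\mathcal{H}^{-1}\mathcal{B}\vec{e}_i\|_{W_{\vec{z}}}\bigr)$ is bounded by one; the factor $\|\mathcal{P}\vec{n}\|_{W_{\vec{z}}}/(\vec{s}^T\vec{n}+\alpha)$ then remains, and, crucially, it carries no dependence on $i$, which is what makes the relative bound uniform over all auxiliary parameters.

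The main obstacle is precisely this last reduction, because it is a Cauchy--Schwarz estimate in which the norms must be matched with care. Using $\vec{n}=-\mathcal{H}^{-1}\vec{g}$ and symmetry of $\mathcal{H}$, I would rewrite $\vec{n}^T\mathcal{B}\vec{e}_i=\vec{g}^T(\mathcal{H}^{-1}\mathcal{B}\vec{e}_i)$ and pair $\vec{g}$ against $\mathcal{H}^{-1}\mathcal{B}\vec{e}_i$ so that $\|\mathcal{H}^{-1}\mathcal{B}\vec{e}_i\|_{W_{\vec{z}}}$ appears on the right and cancels the denominator, leaving only the $\|\vec{g}\|_2$ factor that matches the Euclidean normalization built into $\vec{s}$. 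The delicate point is reconciling that Euclidean normalization of $\vec{s}$ with the weighted norm $\|\cdot\|_{W_{\vec{z}}}$ in the denominator, i.e.\ ensuring the projection of $\mathcal{H}^{-1}\mathcal{B}\vec{e}_i$ along $\vec{g}$ is dominated by its $W_{\vec{z}}$-norm, and I would verify this step carefully (including the consistency of the weighting) before concluding that the bound holds uniformly in $i$.
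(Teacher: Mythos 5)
Your proposal is correct and follows essentially the same route as the paper: observe $\tilde{\mathcal B}=\mathcal B$, apply the reverse triangle inequality, and invert the rank-one Hessian update via Sherman--Morrison. Your execution is in fact slightly cleaner: where the paper bounds $\Vert \mathcal P \mathcal H^{-1}\mathcal{H}_{\tilde{R}}\mathcal H^{-1}\mathcal B\vec{e}_i\Vert_{W_{\vec{z}}}$ by a chain of operator-norm inequalities, you keep the exact rank-one identity
\begin{equation*}
\mathcal P\tilde{\mathcal H}^{-1}\mathcal B\vec{e}_i-\mathcal P\mathcal H^{-1}\mathcal B\vec{e}_i=-\frac{\vec{s}^T\mathcal H^{-1}\mathcal B\vec{e}_i}{\alpha+\vec{s}^T\vec{n}}\,\mathcal P\vec{n},
\end{equation*}
so that a single Cauchy--Schwarz step remains. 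The ``delicate point'' you flag at the end is genuine, but it is not a gap relative to the paper: Cauchy--Schwarz with $\Vert\vec{s}\Vert_2=1$ yields $\vert\vec{s}^T\mathcal H^{-1}\mathcal B\vec{e}_i\vert\le\Vert\mathcal H^{-1}\mathcal B\vec{e}_i\Vert_2$, a Euclidean norm, whereas the stated bound divides by $\Vert\mathcal H^{-1}\mathcal B\vec{e}_i\Vert_{W_{\vec{z}}}$; these match only under a normalization such as $W_{\vec{z}}\succeq I$ (Cauchy--Schwarz in the $W_{\vec{z}}$ inner product instead produces the extra factor $\sqrt{\vec{s}^T W_{\vec{z}}^{-1}\vec{s}}$, which need not be bounded by one for arbitrary symmetric positive definite $W_{\vec{z}}$). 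The paper's own proof silently elides exactly this point: its submultiplicative step treats $\Vert\mathcal P\mathcal H^{-1}\mathcal{H}_{\tilde{R}}\Vert_{W_{\vec{z}}}$ as a $W_{\vec{z}}$-to-$W_{\vec{z}}$ operator norm so that $\Vert\mathcal H^{-1}\mathcal B\vec{e}_i\Vert_{W_{\vec{z}}}$ appears on the right, but its subsequent identification $\Vert\mathcal P\mathcal H^{-1}\vec{s}\vec{s}^T\Vert_{W_{\vec{z}}}=\Vert\mathcal P\mathcal H^{-1}\vec{s}\Vert_{W_{\vec{z}}}$ invokes $\Vert\vec{s}\Vert_2=1$, i.e.\ a Euclidean-to-$W_{\vec{z}}$ norm, mixing the two normalizations in precisely the way you worried about. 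So your plan reproduces the theorem under the same implicit assumption on $W_{\vec{z}}$ that the published proof makes, and your instinct to verify that final step is aimed at exactly the thinnest point of the published argument.
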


Conceptually, Theorem~\ref{thm:compare_S_and_tilde_S} indicates that the relative difference between the $i^{th}$ sensitivity index for perturbed and original optimization problems is given by the size of the projection of the Newton step $\mathcal P \vec{n}$ in the next optimization iterate, divided by the portion of the Newton step pointing in the negative gradient direction plus the length scale parameter $\alpha$. Hence if $\vec{z}^\star$ is optimized in the subspace defined by the range of $\mathcal P$ (the likelihood informed subspace), then $\vert \vert \mathcal P \vec{n} \vert \vert_{W_{\vec{z}}}$ will be small implying that the difference between the theoretically justified sensitivity indices for the perturbed optimization problem and the ill-defined ``sensitivity indices" for the original optimization problem will be small. We may compute $\vert \vert \mathcal P \vec{n} \vert \vert_{W_{\vec{z}}}$ using the eigenvalues and eigenvectors defining the LIS to certify its magnitude in practice. For the permeability inversion problem in Section~\ref{sec:numerics} we have $\vert \vert \mathcal P \vec{n} \vert \vert_{W_{\vec{z}}} = \mathcal O(10^{-4})$. 

Prior to developing the first order update we had no justification for applying HDSA to~\eqref{inv_prob_RS} with sub-optimal solutions. Our formulation of $\tilde{R}$ and Theorem~\ref{thm:compare_S_and_tilde_S} provides the theoretical insights needed to properly define and interpret hyper-differential sensitivity indices for suboptimal solutions. For many practical applications, early termination is common due to the extensive compute time needed to drive the optimizer to convergence. Thanks to this theoretical analysis, HDSA may be used to understand properties of the inverse problem at a modest computational expense.

The length scale parameter $\alpha$ defining $\tilde{R}$ appears throughout our analysis.  Theorem~\ref{thm:vary_alpha} below establishes our previous assertion that the sensitivity indices are robust with respect to changes in $\alpha$. 

\begin{theorem}
\label{thm:vary_alpha}
Let $\tilde{\mathcal H}(\alpha)$ denote the Hessian of $\tilde{J}$ with respect to $\vec{z}$ and $\tilde{\mathcal B}(\alpha)$ denote the Jacobian of $\nabla_{\vec{z}} \tilde{J}$ with respect to $\vec{\theta}$, each evaluated at $\vec{z}^\star$ and $\overline{\vec{\theta}}$, and considered as functions of $\alpha$. Letting $\tilde{S}_i(\alpha)$ be the sensitivity index defined by~\eqref{eqn:sen_indices_formula}, as a function of $\alpha$, and assuming that $\mathcal H$ is positive definite we have 
\begin{eqnarray*}
\frac{\vert \tilde{S}_i(\alpha + \alpha \beta) - \tilde{S}_i(\alpha) \vert }{\left\vert \left\vert \mathcal H^{-1} \mathcal B \vec{e}_i \right\vert \right\vert_{W_{\vec{z}}}} < \vert \beta \vert  \cdot \frac{\vert \vert \mathcal P \vec{n} \vert \vert_{W_{\vec{z}}}}{\vec{s}^T\vec{n}+\alpha(1+\beta)} \qquad \text{ for } -1 < \beta < 1
\end{eqnarray*}
where 
\begin{eqnarray*}
 \qquad \vec{s} = -\frac{\vec{g}}{\vert \vert \vec{g} \vert \vert_2} \qquad \text{and} \qquad \vec{n} = -\mathcal H^{-1} \vec{g}, \qquad \vec{g}=\nabla_{\vec{z}} J(\vec{z}^\star;\overline{\vec{\theta}}).
\end{eqnarray*}
\end{theorem}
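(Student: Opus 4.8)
The plan is to turn the $\alpha$-dependence into that of a single scalar multiplying a fixed vector, and then estimate. First I would record the structural effect of the update $\tilde{R}$ from Theorem~\ref{thm:optimal_update_1}. Because $\tilde{R}$ in~\eqref{opt_R} depends on $\vec{z}$ alone (the gradient $\vec{g}$ entering it is a constant vector), its mixed $\vec{z},\vec{\theta}$-derivatives vanish, so $\tilde{\mathcal{B}}(\alpha)=\mathcal{B}$ for all $\alpha$, while the Hessian picks up the rank-one term
\begin{eqnarray*}
\tilde{\mathcal{H}}(\alpha)=\mathcal{H}+\frac{1}{\alpha\|\vec{g}\|_2}\vec{g}\vec{g}^T.
\end{eqnarray*}
I would then invert this by the Sherman--Morrison formula and rewrite the result using $\vec{n}=-\mathcal{H}^{-1}\vec{g}$, $\vec{s}=-\vec{g}/\|\vec{g}\|_2$, and the identity $\vec{g}^T\mathcal{H}^{-1}\vec{g}=\|\vec{g}\|_2\,\vec{s}^T\vec{n}$, which gives
\begin{eqnarray*}
\tilde{\mathcal{H}}(\alpha)^{-1}\mathcal{B}\vec{e}_i=\mathcal{H}^{-1}\mathcal{B}\vec{e}_i-\frac{\vec{n}^T\mathcal{B}\vec{e}_i}{\|\vec{g}\|_2\,(\vec{s}^T\vec{n}+\alpha)}\,\vec{n}.
\end{eqnarray*}

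Applying the fixed likelihood-informed projector $\mathcal{P}$ (the same one appearing in Theorem~\ref{thm:compare_S_and_tilde_S}), the perturbed index becomes $\tilde{S}_i(\alpha)=\|\vec{p}-c(\alpha)\,\mathcal{P}\vec{n}\|_{W_{\vec{z}}}$, where $\vec{p}=\mathcal{P}\mathcal{H}^{-1}\mathcal{B}\vec{e}_i$ is independent of $\alpha$ and $c(\alpha)=\vec{n}^T\mathcal{B}\vec{e}_i/(\|\vec{g}\|_2(\vec{s}^T\vec{n}+\alpha))$. All $\alpha$-dependence now sits in the scalar $c(\alpha)$, so the reverse triangle inequality evaluated at $\alpha$ and $\alpha(1+\beta)$ cancels the common $\vec{p}$ and leaves
\begin{eqnarray*}
|\tilde{S}_i(\alpha(1+\beta))-\tilde{S}_i(\alpha)|\le|c(\alpha(1+\beta))-c(\alpha)|\;\|\mathcal{P}\vec{n}\|_{W_{\vec{z}}}.
\end{eqnarray*}
Subtracting the two rational expressions produces the explicit factor
\begin{eqnarray*}
c(\alpha(1+\beta))-c(\alpha)=\frac{\vec{n}^T\mathcal{B}\vec{e}_i}{\|\vec{g}\|_2}\cdot\frac{-\alpha\beta}{(\vec{s}^T\vec{n}+\alpha(1+\beta))(\vec{s}^T\vec{n}+\alpha)},
\end{eqnarray*}
which already displays the $|\beta|$ and $\vec{s}^T\vec{n}+\alpha(1+\beta)$ structure of the claimed bound; the restriction $-1<\beta<1$ keeps $\alpha(1+\beta)>0$ so all denominators stay positive.

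It remains to divide by $\|\mathcal{H}^{-1}\mathcal{B}\vec{e}_i\|_{W_{\vec{z}}}$ and show that the leftover factor is strictly below one, which I expect to be the only real obstacle. After cancelling one copy of $\vec{s}^T\vec{n}+\alpha(1+\beta)$ against the target denominator (and noting $|\beta|$ and $\|\mathcal{P}\vec{n}\|_{W_{\vec{z}}}$ cancel), the quantity to control is
\begin{eqnarray*}
\frac{|\vec{n}^T\mathcal{B}\vec{e}_i|}{\|\vec{g}\|_2\,\|\mathcal{H}^{-1}\mathcal{B}\vec{e}_i\|_{W_{\vec{z}}}}\cdot\frac{\alpha}{\vec{s}^T\vec{n}+\alpha}.
\end{eqnarray*}
Positive definiteness of $\mathcal{H}$ enters decisively: it forces $\vec{s}^T\vec{n}=\vec{g}^T\mathcal{H}^{-1}\vec{g}/\|\vec{g}\|_2>0$, so the second factor is strictly less than one and produces the strict inequality. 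For the first factor I would use $|\vec{n}^T\mathcal{B}\vec{e}_i|=\|\vec{g}\|_2\,|\vec{s}^T(\mathcal{H}^{-1}\mathcal{B}\vec{e}_i)|$ together with the Cauchy--Schwarz estimate $|\vec{s}^T(\mathcal{H}^{-1}\mathcal{B}\vec{e}_i)|\le\|\mathcal{H}^{-1}\mathcal{B}\vec{e}_i\|_2\le\|\mathcal{H}^{-1}\mathcal{B}\vec{e}_i\|_{W_{\vec{z}}}$, where $\vec{s}$ is a Euclidean unit vector and the final step invokes the scaling of $\vec{z}$ under which $\|\cdot\|_2\le\|\cdot\|_{W_{\vec{z}}}$ (equivalently, $W_{\vec{z}}-I$ is positive semidefinite). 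This bounds the first factor by one and closes the argument. This last estimate is exactly the one underlying Theorem~\ref{thm:compare_S_and_tilde_S}, so the present result is essentially its two-parameter analogue, with the base Hessian $\mathcal{H}$ replaced by the family $\tilde{\mathcal{H}}(\alpha)$, and I would expect to reuse that estimate essentially verbatim.
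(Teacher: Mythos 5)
Your proposal is correct and follows essentially the same route as the paper's proof: observe $\tilde{\mathcal B}(\alpha)=\mathcal B$, use Sherman--Morrison to push all $\alpha$-dependence into the scalar $1/(\vec{s}^T\vec{n}+\alpha)$ multiplying the fixed vector $\vec{n}$, apply the reverse triangle inequality, and extract strictness from $\vec{s}^T\vec{n}>0$ via positive definiteness of $\mathcal H$. Your concluding Cauchy--Schwarz step with the comparison $\Vert\cdot\Vert_2\le\Vert\cdot\Vert_{W_{\vec{z}}}$ is the explicit form of what the paper compresses into ``properties of norms'' together with the rank-one identity $\Vert\mathcal P\mathcal H^{-1}\vec{s}\vec{s}^T\Vert_{W_{\vec{z}}}=\Vert\mathcal P\mathcal H^{-1}\vec{s}\Vert_{W_{\vec{z}}}$ (carried over from the proof of Theorem~\ref{thm:compare_S_and_tilde_S}), so you are merely making explicit the scaling hypothesis on $W_{\vec{z}}$ that the paper leaves implicit.
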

Theorem~\ref{thm:vary_alpha} states that for perturbations of $\alpha$ in the form $\alpha (1+\beta)$ for $\beta \in (-1,1)$, changes in the hyper-differential sensitivities are proportional to $\vert \beta \vert$ times $\vert \vert \mathcal P \vec{n} \vert \vert$, which is generally small since $\mathcal P$ projects on to the likelihood informed subspace. This ensures that inferences drawn from HDSA are robust with respect to the length scale parameter $\alpha$. Or in other words, our assumption on length scales defining the norm in~\eqref{R_update_gen_opt} does not have a significant influence on the resulting analysis.

\subsection{Second order a posteriori update}
\label{subsec:second_order_update}

The first order a posteriori update $\tilde{R}$ developed in Subsection~\ref{subsec:first_order_update} addresses the theoretical challenge posed by failing to meet the first order optimality condition $\nabla_{\vec{z}} J(\vec{z}^\star;\overline{\vec{\theta}})=0$. In this subsection, we assume that first order optimality has been satisfied by either the optimizer or the first order update. We address the theoretical challenge posed by failing to meet the second order optimality condition (Assumption~\ref{A2}), that $\mathcal H= \nabla_{\vec{z},\vec{z}} J(\vec{z}^\star;\overline{\vec{\theta}})$ is positive definite. This may occur in nonlinear inverse problems where $\mathcal H$ has eigenvalues which are negative but small in magnitude, for instance, as a result of having limited prior information to define the regularization and terminating the optimization routine prematurely because of ill-conditioning and slow convergence. We propose to overcome this theoretical limitation with another regularization perturbation which we call the second order update. 

Rather than posing an optimization problem to determine the second order update (as was done in~\eqref{R_update_gen_opt} for the first order update), consider the expression~\eqref{eqn:sen_indices_formula} for the LIS-hyper-differential sensitivity indices in Theorem~\ref{thm:sen_indices_likelihood_informed}. We define an update that ensures the Hessian is positive definite while not effecting the terms in~\eqref{eqn:sen_indices_formula}, i.e. the update does not effect the likelihood informed subspace. This will provide a theoretical basis to justify the sensitivity indices but will not require any additional computation.

Theorem~\ref{thm:eig_compare} characterizes the Hessian's positive definiteness in terms of the generalized eigenvalues which define the likelihood informed subspace.
\begin{theorem}
\label{thm:eig_compare}
Let $\mathcal H_M,\mathcal H_R \in \mathbb R^{m \times m}$ be symmetric matrices and $\mathcal H_R$ be positive definite. Then $\mathcal H_M+\mathcal H_R$ is positive definite if and only if the generalized eigenvalues of $(\mathcal H_M,\mathcal H_R)$ are greater than -1, i.e. $\mathcal H_M \vec{v}_j = \lambda_j \mathcal H_R \vec{v}_j$ with $\lambda_j > -1 \hspace{1 mm} \forall j$.
\end{theorem}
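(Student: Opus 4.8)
The plan is to reduce both the generalized eigenvalue problem and the definiteness question to a single ordinary symmetric eigenvalue problem by congruence with the square root of $\mathcal H_R$. Since $\mathcal H_R$ is symmetric positive definite, it admits a unique symmetric positive definite square root $\mathcal H_R^{1/2}$ with symmetric inverse $\mathcal H_R^{-1/2}$. First I would substitute $\vec{w}_j = \mathcal H_R^{1/2} \vec{v}_j$ into the generalized eigenvalue equation $\mathcal H_M \vec{v}_j = \lambda_j \mathcal H_R \vec{v}_j$ and left-multiply by $\mathcal H_R^{-1/2}$, turning it into $A \vec{w}_j = \lambda_j \vec{w}_j$, where $A := \mathcal H_R^{-1/2} \mathcal H_M \mathcal H_R^{-1/2}$. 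Because $\mathcal H_M$ and $\mathcal H_R^{-1/2}$ are symmetric, $A$ is symmetric, so its eigenvalues are real and it is orthogonally diagonalizable. The upshot of this step is that the generalized eigenvalues $\lambda_j$ of $(\mathcal H_M, \mathcal H_R)$ are precisely the ordinary eigenvalues of the symmetric matrix $A$.

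Next I would connect the definiteness of $\mathcal H_M + \mathcal H_R$ to $A$. Congruence by the invertible matrix $\mathcal H_R^{-1/2}$ preserves inertia (Sylvester's law of inertia), so $\mathcal H_M + \mathcal H_R$ is positive definite if and only if $\mathcal H_R^{-1/2} (\mathcal H_M + \mathcal H_R) \mathcal H_R^{-1/2}$ is. A direct computation gives $\mathcal H_R^{-1/2} (\mathcal H_M + \mathcal H_R) \mathcal H_R^{-1/2} = A + I$, since the $\mathcal H_R$ term collapses to the identity. Thus the question becomes whether $A + I$ is positive definite.

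Finally, since $A$ is symmetric with eigenvalues $\{\lambda_j\}$, the matrix $A + I$ is symmetric with eigenvalues $\{\lambda_j + 1\}$, and a symmetric matrix is positive definite exactly when all its eigenvalues are positive. Therefore $A + I$ (equivalently $\mathcal H_M + \mathcal H_R$) is positive definite if and only if $\lambda_j + 1 > 0$, that is $\lambda_j > -1$, for every $j$, which is the claim. I do not anticipate a substantive obstacle: the argument is essentially the simultaneous reduction of the symmetric pencil $(\mathcal H_M, \mathcal H_R)$ by $\mathcal H_R^{1/2}$. The only points requiring care are justifying the existence and symmetry of $\mathcal H_R^{-1/2}$ (immediate from positive definiteness) and invoking that congruence preserves definiteness rather than merely similarity; both are standard, so the hard part is really just assembling these facts in the right order rather than any delicate estimate.
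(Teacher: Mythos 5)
Your proof is correct, and it takes a genuinely different route from the paper's. The paper argues directly with the generalized eigenvectors: for the forward direction it evaluates the quadratic form on $\mathcal H_R$-normalized eigenvectors, getting $\vec{v}_j^T(\mathcal H_M+\mathcal H_R)\vec{v}_j = \lambda_j+1 > 0$, and for the reverse direction it expands an arbitrary $\vec{x}$ in the $\mathcal H_R$-orthonormal eigenvector basis to obtain $\vec{x}^T(\mathcal H_M+\mathcal H_R)\vec{x} = \sum_{j=1}^m (\vec{x}^T\mathcal H_R\vec{v}_j)^2(\lambda_j+1) > 0$. You instead perform the standard simultaneous reduction of the pencil by $\mathcal H_R^{\pm 1/2}$, converting both the generalized eigenvalue problem and the definiteness question into statements about the single symmetric matrix $A = \mathcal H_R^{-1/2}\mathcal H_M\mathcal H_R^{-1/2}$ and its shift $A+I$, with Sylvester's law of inertia carrying definiteness across the congruence. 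The two arguments are close cousins---the paper's $\mathcal H_R$-orthonormal basis is exactly the image under $\mathcal H_R^{-1/2}$ of an orthonormal eigenbasis of $A$---but yours is arguably more self-contained: the paper tacitly invokes the existence of a complete $\mathcal H_R$-orthonormal set of generalized eigenvectors (a fact whose usual justification is precisely your square-root reduction plus the spectral theorem), whereas you derive it along the way. What the paper's version buys in exchange is that it never introduces $\mathcal H_R^{1/2}$ or inertia, working only with quadratic forms in the weighted inner products used throughout the article. Both proofs are complete; there is no gap in yours.
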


If the second order optimality condition is not satisfied, then it follows from Theorem~\ref{thm:eig_compare} that there exist generalized eigenvalue less than or equal to $-1$. Let
$$\lambda_1 \ge \lambda_2 \ge \dots \ge \lambda_K > -1 \ge \lambda_{K+1} \ge \dots \ge \lambda_m,$$
be the generalized eigenvalues of $(\mathcal H_M,\mathcal H_R)$. We ensure positive definiteness of the Hessian by adding an update in the directions of the generalized eigenvectors whose corresponding eigenvalues are less than or equal to $-1$. This update is characterized in Theorem~\ref{thm:gen_reg_update}.
\begin{theorem}
\label{thm:gen_reg_update}
Let $\mathcal H_M,\mathcal H_R \in \mathbb R^{m \times m}$ be symmetric matrices and $\mathcal H_R$ be positive definite. Assume that 
\begin{eqnarray*}
\mathcal H_M \vec{v}_j = \lambda_j \mathcal H_R \vec{v}_j \qquad \text{and} \qquad \vec{v}_j^T\mathcal H_R\vec{v}_j=1, j=1,2,\dots,m.
\end{eqnarray*}
 For some $\delta>0$, define the updates 
 \begin{eqnarray*}
 \mathcal U_i(\delta)=\delta \mathcal H_R \vec{v}_i \vec{v}_i^T \mathcal H_R \qquad i=K+1,K+2,\dots,m.
 \end{eqnarray*}
Then $(\mathcal H_M+\mathcal U_i(\delta))\vec{v}_j=\lambda_j \mathcal H_R \vec{v}_j$, $j\ne i$ and $(\mathcal H_M+\mathcal U_i(\delta))\vec{v}_i = (\lambda_i + \delta) \mathcal H_R\vec{v}_i$.
\end{theorem}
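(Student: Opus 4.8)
The plan is to reduce the entire statement to a direct computation that exploits the rank-one structure of $\mathcal U_i(\delta)$ together with the $\mathcal H_R$-orthonormality of the generalized eigenvectors. The crucial preliminary fact is that the eigenvectors $\vec{v}_1,\dots,\vec{v}_m$ may be taken to satisfy $\vec{v}_i^T \mathcal H_R \vec{v}_j = \delta_{ij}$ (the Kronecker delta). I would first establish this orthonormality: since $\mathcal H_M$ and $\mathcal H_R$ are symmetric with $\mathcal H_R$ positive definite, the generalized eigenvalue problem is equivalent, via the substitution $\vec{w}_j = \mathcal H_R^{1/2} \vec{v}_j$, to the standard symmetric problem $\mathcal H_R^{-1/2} \mathcal H_M \mathcal H_R^{-1/2} \vec{w}_j = \lambda_j \vec{w}_j$. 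The matrix $\mathcal H_R^{-1/2} \mathcal H_M \mathcal H_R^{-1/2}$ is symmetric, so its eigenvectors $\vec{w}_j$ can be chosen orthonormal in the Euclidean inner product; translating back gives $\vec{v}_i^T \mathcal H_R \vec{v}_j = \vec{w}_i^T \vec{w}_j = \delta_{ij}$, which is consistent with the stated normalization $\vec{v}_j^T \mathcal H_R \vec{v}_j = 1$ and supplies the off-diagonal orthogonality at no extra cost.

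With orthonormality in hand, the second step is to compute the action of the update on an arbitrary eigenvector. Because $\mathcal U_i(\delta) = \delta \mathcal H_R \vec{v}_i \vec{v}_i^T \mathcal H_R$ is a rank-one matrix, applying it to $\vec{v}_j$ yields
\begin{eqnarray*}
\mathcal U_i(\delta) \vec{v}_j = \delta \mathcal H_R \vec{v}_i (\vec{v}_i^T \mathcal H_R \vec{v}_j) = \delta \, \delta_{ij} \, \mathcal H_R \vec{v}_i,
\end{eqnarray*}
where the scalar $\vec{v}_i^T \mathcal H_R \vec{v}_j$ is collapsed using the orthonormality established above. This is the key observation: the update annihilates every eigenvector except $\vec{v}_i$, on which it acts (in the $\mathcal H_R$-weighted sense) as multiplication by $\delta$.

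The final step is to add the eigenvalue relation $\mathcal H_M \vec{v}_j = \lambda_j \mathcal H_R \vec{v}_j$ and split into cases. For $j \ne i$ the update term vanishes, giving $(\mathcal H_M + \mathcal U_i(\delta))\vec{v}_j = \lambda_j \mathcal H_R \vec{v}_j$; for $j = i$ the update contributes $\delta \mathcal H_R \vec{v}_i$, giving $(\mathcal H_M + \mathcal U_i(\delta))\vec{v}_i = (\lambda_i + \delta)\mathcal H_R \vec{v}_i$. Both assertions follow immediately. I expect no genuine obstacle in this argument; the only point requiring care is the justification of the cross-orthogonality $\vec{v}_i^T \mathcal H_R \vec{v}_j = 0$ for $i \ne j$, which is automatic for eigenvectors belonging to distinct eigenvalues and can be arranged by an $\mathcal H_R$-orthogonal choice of basis within each eigenspace when eigenvalues repeat. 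Once that is in place, the remainder is a one-line verification.
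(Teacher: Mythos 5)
Your proof is correct and follows essentially the same route as the paper's: exploit the rank-one structure of $\mathcal U_i(\delta)$ and the $\mathcal H_R$-orthogonality of the generalized eigenvectors so that $\mathcal U_i(\delta)\vec{v}_j = \delta\,\delta_{ij}\,\mathcal H_R\vec{v}_i$, then add the eigenvalue relation and split into the cases $j\ne i$ and $j=i$. The only difference is one of exposition: where the paper simply cites $\mathcal H_R$-orthogonality as ``a consequence of the spectral theorem for symmetric matrices,'' you spell it out via the whitening $\vec{w}_j = \mathcal H_R^{1/2}\vec{v}_j$ and correctly flag the repeated-eigenvalue case, which is a slightly more careful treatment of the same idea.
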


Theorems~\ref{thm:eig_compare} and~\ref{thm:gen_reg_update} suggest that we define the second order update 
\begin{eqnarray*}
\tilde{\tilde{R}}(\vec{z}) = \frac{1}{2} \sum\limits_{i=K+1}^m (\vec{z}-\vec{z}^\star)^T \mathcal U_i\left(-1 - \lambda_i + \epsilon \right) (\vec{z}-\vec{z}^\star) 
\end{eqnarray*}
for an arbitrarily small $\epsilon>0$. Adding $\tilde{\tilde{R}}(\vec{z})$ to $\tilde{J}$ preserves the first order optimality condition since $\nabla_{\vec{z}} \tilde{\tilde{R}}(\vec{z}^\star)=\vec{0}$ and ensures that the second order optimality condition is satisfied at $\vec{z}^\star$.

The generalized eigenvalues less than $-1$ correspond to directions where the misfit has negative curvature and the regularization (which is convex) has positive curvature which is smaller in magnitude. This occurs in directions that are not informed by data and hence are not in the LIS. As Theorem~\ref{thm:gen_reg_update} demonstrates, an update in these directions does not change the leading generalized eigenvectors nor the sensitivity indices, hence it is not necessary to explicitly compute them. This theoretical development serves to justify HDSA when the second order optimality condition is not satisfied, but does not require additional computation. In fact, we do not need to compute the negative generalized eigenvalues, but rather the theory guarantees that ignoring them is not detrimental to our sensitivity analysis. 

\section{Algorithmic overview} \label{sec:alg_overview}
The developments of Sections~\ref{sec:likelihood_informed} and~\ref{sec:enabling_hdsa} are encapsulated in Algorithm~\ref{alg:overview} to provide a concise perspective on the computational components of our proposed analysis which we will refer to as LIS-HDSA. Line~1 solves the PDE-constrained optimization problem. Lines~2 and~3 are the computational steps required for analysis, and Line~4 is a simple postprocessing of the data from Lines~2 and~3 to compute the sensitivities. The subsections below expand on Lines~2 and~3 of Algorithm~\ref{alg:overview}.

The computational cost is dominated by the PDE solves required to compute matrix-vector products with $\mathcal H_M$ in Line~2 and with $\mathcal B$ in Line~3. The cost of the first order update is negligible and the second order update is a strictly theoretical tool and does not require computation. 
Note that Line~2 uses the first order update regularization $\mathcal H_R+\mathcal H_{\tilde{R}}$ rather than $\mathcal H_R$ which was presented in~\eqref{eqn:likelihood_informed_gen_eig}. This is because the LIS projection was presented before the a posteriori updates for clarity of the exposition, but it is theoretically appropriate to analyze the perturbed optimization problem. However, since adding the first order update increases the regularization in the gradient direction, which we assume is uninformed, the LIS for the perturbed problem will be close to (or even equal to) the LIS for the original problem. 
\begin{algorithm}
\caption{Computation of LIS-hyper-differential sensitivity indices}
\label{alg:overview}
\begin{algorithmic}[1]
\STATE Solve the optimization problem (possibly with early termination) $$\min\limits_{\vec{z} \in \mathbb R^m} J(\vec{z},\overline{\vec{\theta}}):=M(\vec{z},\overline{\vec{\theta}}) + R(\vec{z})$$ \vspace{-3 mm}
\STATE Compute the leading positive eigenpairs $\mathcal H_M \vec{v}_j = \lambda_j (\mathcal H_R + \mathcal H_{\tilde{R}}) \vec{v}_j,$ $j=1,\dots,r$
\STATE Compute $\mathcal B e_i = \nabla_{\vec{z},\vec{\theta}} J(\vec{z}^\star; \overline{\vec{\theta}}) e_i$, $i=1,2,\dots,n$
\STATE Compute the hyper-differential sensitivity indices using~\eqref{eqn:sen_indices_formula}
\end{algorithmic}
\end{algorithm}

\subsection{Hessian generalized eigenvalue problem} \label{ssec:hgev}
In general, the misfit Hessian is only accessible via matrix-vector products so the Hessian generalized eigenvalue problem (GEVP), Line~2 of Algorithm~\ref{alg:overview}, may be solved with either iterative methods (such as Krylov solvers) or randomized methods. Each matrix-vector product with the misfit Hessian requires two PDE solves (assuming that adjoints are used \cite{sunseri_hdsa}). Randomized methods afford greater parallelism than iterative methods when adequate computational resources are available \cite{HDSA,saibaba_gsvd}. Algorithm~\ref{alg:GHEP} adapts Algorithm~6 in \cite{arvind} by iterating on the number of desired eigenvalues. It terminates on a user specified minimum eigenvalue, which may be chosen using the eigenvalues interpretation as the ratio of misfit and regularization~\eqref{eqn:eig_interpretation}. 

\begin{algorithm}
\caption{Randomized Generalized Hermitian Eigenvalue Algorithm}
\label{alg:GHEP}
\begin{algorithmic}[1]
\STATE \textbf{Input: } oversampling factor $p \in \mathbb N$, initial target rank $r_0 \in \mathbb N$, rank increment $\Delta r \in \mathbb N$, minimum eigenvalue threshold $\lambda_{min} \in \mathbb R$
\STATE Set $\lambda_{iter} = \infty$
\STATE Set $r=r_0$
\STATE Generate a random matrix $\Omega \in \mathbb R^{n \times (r_0+p)}$
\WHILE{$\lambda_{iter} > \lambda_{min}$}
\IF{Number of columns of $\Omega < r+p$}
\STATE $\Omega = [\Omega, \Omega_{\Delta r}]$ for a randomly generated $\Omega_{\Delta r} \in \mathbb R^{n \times \Delta r}$
\STATE Set $r = r + \Delta r$
\ENDIF
\STATE Compute $Y =( \mathcal H_R + \mathcal H_{\tilde{R}})^{-1} \mathcal H_M \Omega$
\STATE Compute $Q$, whose columns span the range of $Y$, with $Q^T ( \mathcal H_R + \mathcal H_{\tilde{R}}) Q=I$
\STATE Compute $T=Q^T \mathcal H_M Q$
\STATE Compute the eigen decomposition $T=S \Lambda S^T$ (with decreasing eigenvalues on the diagonal of $\Lambda$)
\STATE Set $\lambda_{iter}=(\Lambda)_{r,r}$ (the $r^{th}$ eigenvalue in $\Lambda$)
\ENDWHILE
\STATE Compute $\vec{v}_j = Q \vec{s}_j$, $j=1,2,\dots,r$, where $\vec{s}_i$ is the $j^{th}$ column of $S$ (normalized so that $\vec{v}_j^T ( \mathcal H_R + \mathcal H_{\tilde{R}}) \vec{v}_j=1$)
\STATE \textbf{Return: } Estimated generalized eigenvalues and eigenvectors $\{\lambda_j,\vec{v}_j \}_{j=1}^r$
\end{algorithmic}
\end{algorithm}

The randomized GEVP algorithm sketches the range space of $( \mathcal H_R + \mathcal H_{\tilde{R}})^{-1} \mathcal H_M \in \mathbb R^{m \times m}$ by applying it to a collection of independent random vectors. This defines a low dimensional subspace from which the generalized eigenvalues and eigenvectors may be estimated through direct methods. The accuracy of randomized solvers for a given number of matrix-vector products is slightly poorer (though in many cases not by much) than a Krylov method with a comparable number of matrix-vector products; however, the randomized algorithm allows asynchronous computation which may reduce wall clock time.

The input $p$ (oversampling factor) ensures that the subspace defined by the leading $r$ eigenvectors is well approximated by the range of $( \mathcal H_R + \mathcal H_{\tilde{R}})^{-1} \mathcal H_M \Omega$. The oversampling factor is well understood and is typically taken to be on the order of 10-20 \cite{randomized_la_review,arvind} to give an accurate approximation of the leading eigenvectors with high probability. Since the generalized eigenvalues appear in the denominator in \eqref{eqn:sen_indices_formula}, and the smallest generalized eigenvalues/vectors are the hardest to estimate, we suggest taking $p=20$ to ensure reliable computation. Assuming $L$ processors are available to execute parallel PDE solves, we set $r_0=L-p$ and $\Delta r= L$ to execute all matrix-vector products simultaneously. 

The while loop terminates on a minimum eigenvalue threshold $\lambda_{min}$ which the user chooses a priori using the interpretation of the eigenvalues \eqref{eqn:eig_interpretation}. Setting $\lambda_{min}=1$ projects on the subspace where the data contributes more than the regularization. Since there is some ambiguity in the choice of $\lambda_{min}$ we propose an approach to assess the robustness of the sensitivity indices with respect to changes in $\lambda_{min}$. Specifically, the user may execute Algorithm~\ref{alg:GHEP} with $\lambda_{min}$ less than their desired threshold. The LIS-hyper-differential sensitivities may be computed via \eqref{eqn:sen_indices_formula} for different choices of $r$ (corresponding to different eigenvalue thresholds) at a negligible computational cost. We demonstrate this in Section~\ref{sec:numerics}. 

The computational cost of the Algorithm~\ref{alg:GHEP} is dominated by the products involving $\mathcal H_M$ in Lines~10 and~12. These lines may reuse matrix-vector products from previous iterations of the while loop so the total number of matrix-vector products involving $\mathcal H_M$ is $2(r+p)$, where $r$ is the target rank at the termination of the while loop. Lines~10 and~12 are embarrassingly parallel due to the randomization decoupling the vectors, hence the cost is mitigated when parallelization is enabled.

The random matrix generation in Lines~4 and~7 may be done in a variety of ways but most commonly have entries sampled independently from a standard normal distribution. The orthogonalization in Line~11 may be done with a combination of Cholesky and QR matrix decompositions, see Algorithms~4 and~5 in \cite{arvind} for details. The eigen decomposition in Line~13 is on a small dense matrix and may be done using classical dense linear algebra kernels.

\subsection{Action of $\mathcal B$}
If $\{\mathcal B e_i\}_{i=1}^n$ are computed by taking $n$ matrix vector products, the computational cost for Line~3 of Algorithm~\ref{alg:overview} is $2n$ PDE solves \cite{sunseri_hdsa} ($n$ is the dimension of the auxiliary parameters $\theta$). For moderate $n$, this cost is easily mitigated by parallelism of the matrix-vector products (it is embarrassingly parallel for up to $n$ processors). For large values of $n$, which is common when auxiliary parameters are spatially or temporally distributed, Line~3 becomes computationally intensive. However, when $n$ is large there is frequently low rank structure in the auxiliary parameter space which may be exploited by computing its Singular Value Decomposition. This has the potential to significantly reduce the number of matrix-vector products. 

\section{Numerical results} \label{sec:numerics}
This section demonstrates the use of LIS-HDSA for a prototypical permeability inversion using a tracer test. Such problems are critical in ground water and petroleum reservoir management which are characterized by spatial heterogeneity, data sparsity, and complex nonlinearities. Our prototypical inverse problem emulates such features that are common in more complex material property estimation and source identification problems.

\subsection{Inverse problem formulation} \label{ssec:model}
We consider inverting for a log permeability field $\kappa$ via a tracer test. Specifically, we use observations of the tracer concentration $c$ and fluid pressure $p$ which are modeled (in two dimensions) by the advection diffusion equation and Darcy's equation, respectively. We consider the optimization problem
 \begin{align}
\label{eqn:perm_inv}
 \min_{\kappa} \sum\limits_{i=1}^{N} \frac{2}{w_c^2} \vert \vert \mathcal T_c c(\kappa,t_i) - \vec{d}_c(t_i )\vert \vert_2^2 + \frac{1}{w_p^2} \vert \vert \mathcal T_p p(\kappa) - \vec{d}_p\vert \vert_2^2+ \gamma_1 \vert \vert \nabla \kappa \vert \vert^2 +  \gamma_2 \vert \vert \kappa \vert \vert^2 
 \end{align}
where $\mathcal T_c$ and $\mathcal T_p$ denote observation operators, $\vec{d}_c(t_i)$ and $\vec{d}_p$ denotes the observed data for concentration (at time nodes $t_i$, $i=1,2,\dots,N$) and pressure, and $(c(\kappa,t),p(\kappa))$ satisfy the system of PDEs 
\begin{align*}
& -\nabla\cdot(e^\kappa \nabla p) = 0 \quad &
\text{ in } D \\
& \frac{\partial c}{\partial t} - \nabla\cdot\Big( \eta(\vec{\theta}) \nabla c \Big) + \nabla\cdot\big( -e^\kappa \nabla p c\big)
= g(\vec{\theta}) \quad & \text{ in } [0,T]\times D \\
& p = p_1(\vec{\theta}) \quad & \text{on } \Gamma_1 \\
& p = p_2(\vec{\theta}) \quad & \text{on } \Gamma_3 \\
& e^\kappa \nabla p \cdot n = 0 \quad & \text{ on } \Gamma_0 \cup \Gamma_2 \\
& \nabla c \cdot n = 0 \quad  & \text{ on } [0,T]\times\{\Gamma_0\cup\Gamma_1\cup\Gamma_2\cup\Gamma_3\} \\
& c(0,x) = 0 \quad & \text{ in } D
\end{align*}
on the domain $D=(0,1)^2$ with $\Gamma_i$, $i=0,1,2,3$, denoting the four sides of the square (depicted in Figure~\ref{fig:domain_sensors}).

The PDE system has uncertainty in its pressure Dirichlet conditions $p_1$ and $p_2$, its tracer source $g$, and its diffusion coefficient $\eta$. Each are parameterized by $\vec{\theta}$ so that $\overline{\vec{\theta}}=\vec{0}$ corresponds to a nominal estimate of the parameter. 

The pressure Dirichlet conditions are spatially varying and their uncertainty is represented using linear finite element basis functions defined on each boundary. The tracer source consists of 16 injection locations with its uncertainty parameterized by taking linear finite element basis functions defined on 3x3 square meshes around each injection point. The diffusion coefficient is a scalar and hence its uncertainty is parameterized by a scalar. Parameterizations are given in Table~\ref{tab:pde_params}.

	\begin{table}[!ht]
	    \centering
	    \begin{tabular}{c|c}
	    \hline
	    Dirichlet Condition & $p_1(\vec{\theta}) = \left(15 + \cos(2 \pi y ) + .5\cos(4 \pi y)\right) \delta_{p_1}(\vec{\theta})$ \\
	    Dirichlet Condition & $p_2(\vec{\theta}) = \left(10 + 2\cos(2 \pi y)\right) \delta_{p_2}(\vec{\theta})$ \\
	    Dirichlet Perturbation & $\delta_{p_1}(\vec{\theta}) = 1 + 0.1 \sum_{j=1}^{21} \theta_{j} \phi_j$ \\
	    Dirichlet Perturbation & $\delta_{p_2}(\vec{\theta}) = 1 + 0.1 \sum_{j=1}^{21} \theta_{21+j} \phi_j$ \\
	    Source Term & $g(\vec{\theta}) = \sum_{k=1}^{16} 10e^{-100((x-v_k)^2 + (y-w_k)^2)} \delta_{s_k}(\vec{\theta})$ \\
	    Source Perturbations & $\delta_{s_k}(\vec{\theta}) = 1 + 0.1 \sum_{j=1}^9 \theta_{(42+(k-1)9+j)} \psi_j(v_k,w_k)$ \\
	    Diffusion Coefficient & $\eta(\vec{\theta}) = .025 (1+0.1 \theta_{187})$ \\
	     \hline
	     \end{tabular}
	    \caption{Expressions for the PDE parameters with uncertainty modeled by $\vec{\theta}$. The functions $\{\phi_j\}_{j=1}^{21}$ are linear finite element basis functions defined on a uniform mesh of $[0,1]$. The functions $\{\psi_j(v_k,w_k)\}_{j=1}^9$ are linear finite element basis functions defined on a $3x3$ square mesh centered at $(v_k,w_k)$. The set of points $\{(v_k,w_k)\}_{k=1}^{16}$ correspond to the injection locations depicted by the diamonds in Figure~\ref{fig:domain_sensors}.}
	    \label{tab:pde_params}
	\end{table}

To avoid the inverse crime, we generate data by solving the forward problem on a fine mesh, add zero-mean Gaussian white noise to the data, and sparsify in space (the observation locations are depicted in Figure~\ref{fig:domain_sensors}). The objective function weights $w_c$ and $w_p$ nondimensionalize the misfit so that pressure and concentration data is comparable, and the regularization coefficients $\gamma_1$ and $\gamma_2$ are chosen to avoid over smoothing the solution. The concentration data is multiplied by $2$ to put greater weight on fitting it. This improves the inversion by emphasizing the advective information in the data. Problem parameter details are in Table~\ref{tab:prob_params}.

	\begin{table}[!ht]
	    \centering
	    \begin{tabular}{c|c}
	    \hline
	    Spatial mesh for data generation & $128 \times 128$ rectangular mesh\\
	    Spatial mesh for inverse problem & $64 \times 64$ rectangular mesh \\
            Time steps for data generation & $151$ \\
            Time steps for inverse problem & $76$ \\
            Noise standard deviation & $1\%$ of the node data value \\
            Final time & $T=0.25$ \\
            Discretized dimensions & $\vec{u}  \in \mathbb R^{325325}$, $\vec{z} \in \mathbb R^{4225}$, $\vec{\theta} \in \mathbb R^{187}$ \\
            Observation locations & Depicted in Figure~\ref{fig:domain_sensors} \\
             Regularization coefficients & $\gamma_1=10^{-5}$ and $\gamma_2=10^{-7}$ \\
            Misfit weights $w_c$ and $w_p$ & Average concentration and pressure data \\
	     \hline
	     \end{tabular}
	    \caption{Discretization, data generation, and objective function parameters defining the inverse problem.}
	    \label{tab:prob_params}
	\end{table}

\begin{figure}[h]
\centering
  \includegraphics[width=0.65\textwidth]{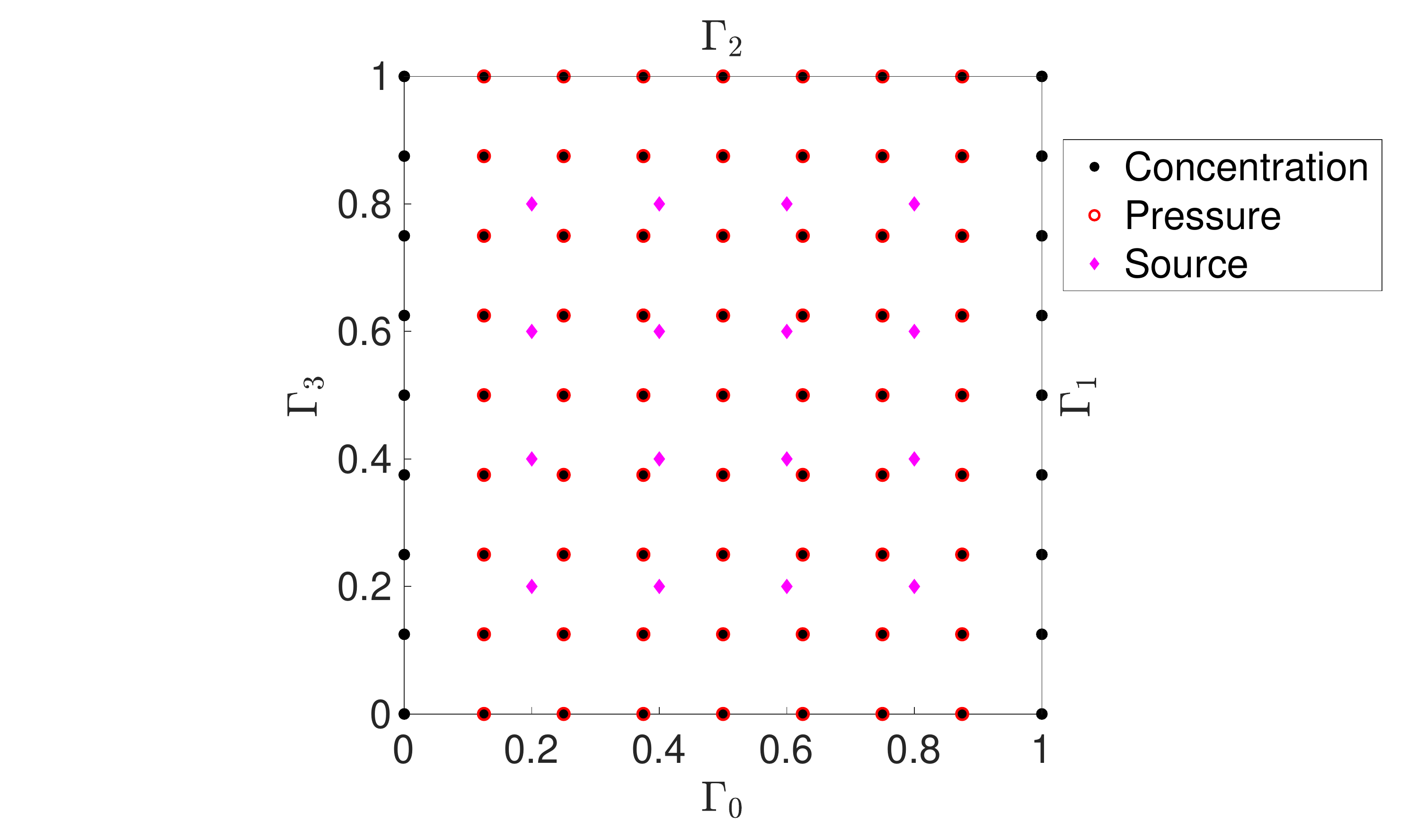}
  \caption{Depiction of the computational domain with locations of concentration sensors (dots), pressure sensors (circles), and source injection sites (diamonds).}
  \label{fig:domain_sensors}
\end{figure}

\subsection{Permeability field estimate} \label{ssec:solution}
The optimization problem is solved with $\vec{\theta}=\vec{0}$ using a truncated conjugate gradient trust region algorithm. Table~\ref{tab:iteration_history} displays the iteration history. The slow convergence observed in this example is typical for large-scale nonlinear inverse problems. Comparing the solutions after 75 iterations and 125 iterations shows that the estimate is not noticeably different, despite the fact that it took significantly more computation time to run these 50 iterations. Furthermore, based on the trends in Table~\ref{tab:iteration_history}, it may take many additional iterations to convergence (significant wall clock time) without improving the quality of the solution. This challenge exemplifies our motivation to apply HDSA to suboptimal solutions rather than requiring convergence to justify the analysis.

	\begin{table}[!ht]
	    \centering
	    \begin{tabular}{c|c|c|c}
	    Iteration & Objective & Gradient Norm & Step Size \\ \hline
	    0 & 17.2 & 1.33 &  N/A  \\
	    4 & 9.59 & .697 &  15.6  \\
	    10 & 3.29 & .676 &  2.38  \\
	     41 & .897 & .113 &  2.02  \\
	     65 & .578 & .331 &  .115  \\
	     75 & .571 & .102 &  .109  \\
	     125 & .529 & .081 &  .034  \\
	     \end{tabular}
	    \caption{Iteration history for the optimization problem.}
	    \label{tab:iteration_history}
	\end{table}

The initial iterate, estimated log permeability field, and the ``true" log permeability field which generated the data are given in Figure~\ref{fig:perm_fields}. We observe that the high and low permeability regions in the middle of the domain are apparent in the solution, while many of the fine scale features are not resolved. This is unsurprising given the diffusivity and nonlinearity of the physics. 

\begin{figure}[h]
\centering
  \includegraphics[width=0.3\textwidth]{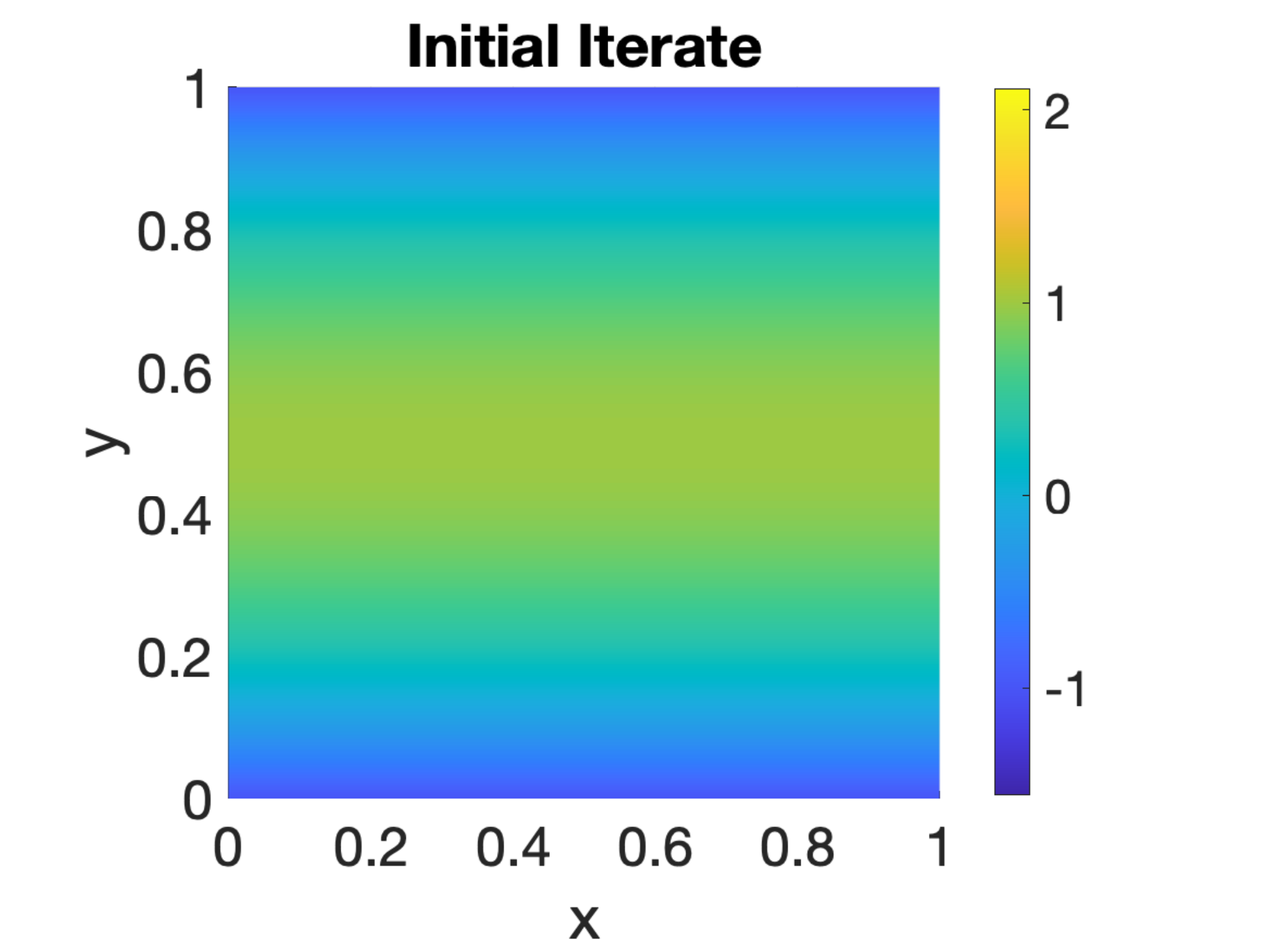}
    \includegraphics[width=0.3\textwidth]{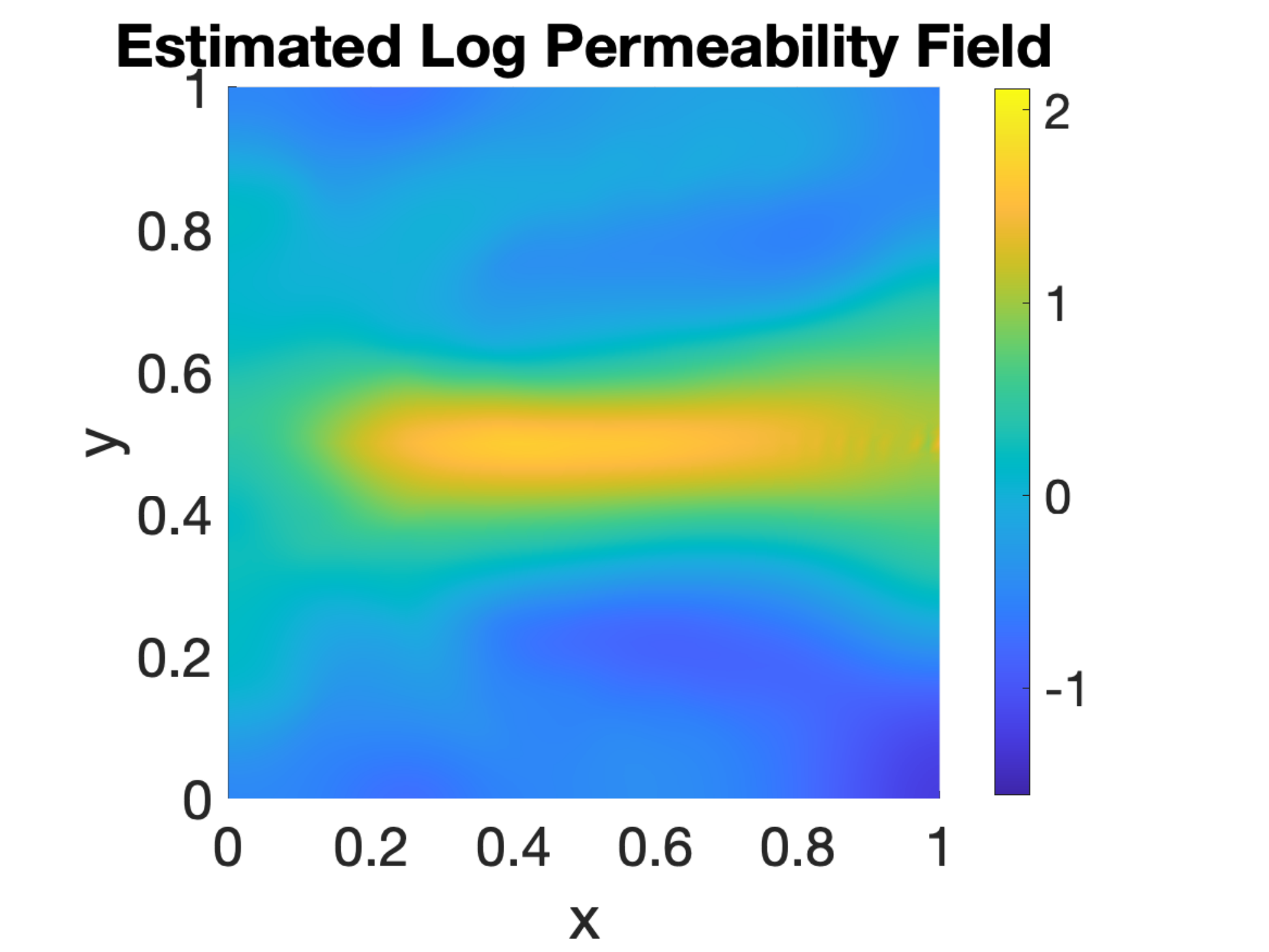}
      \includegraphics[width=0.3\textwidth]{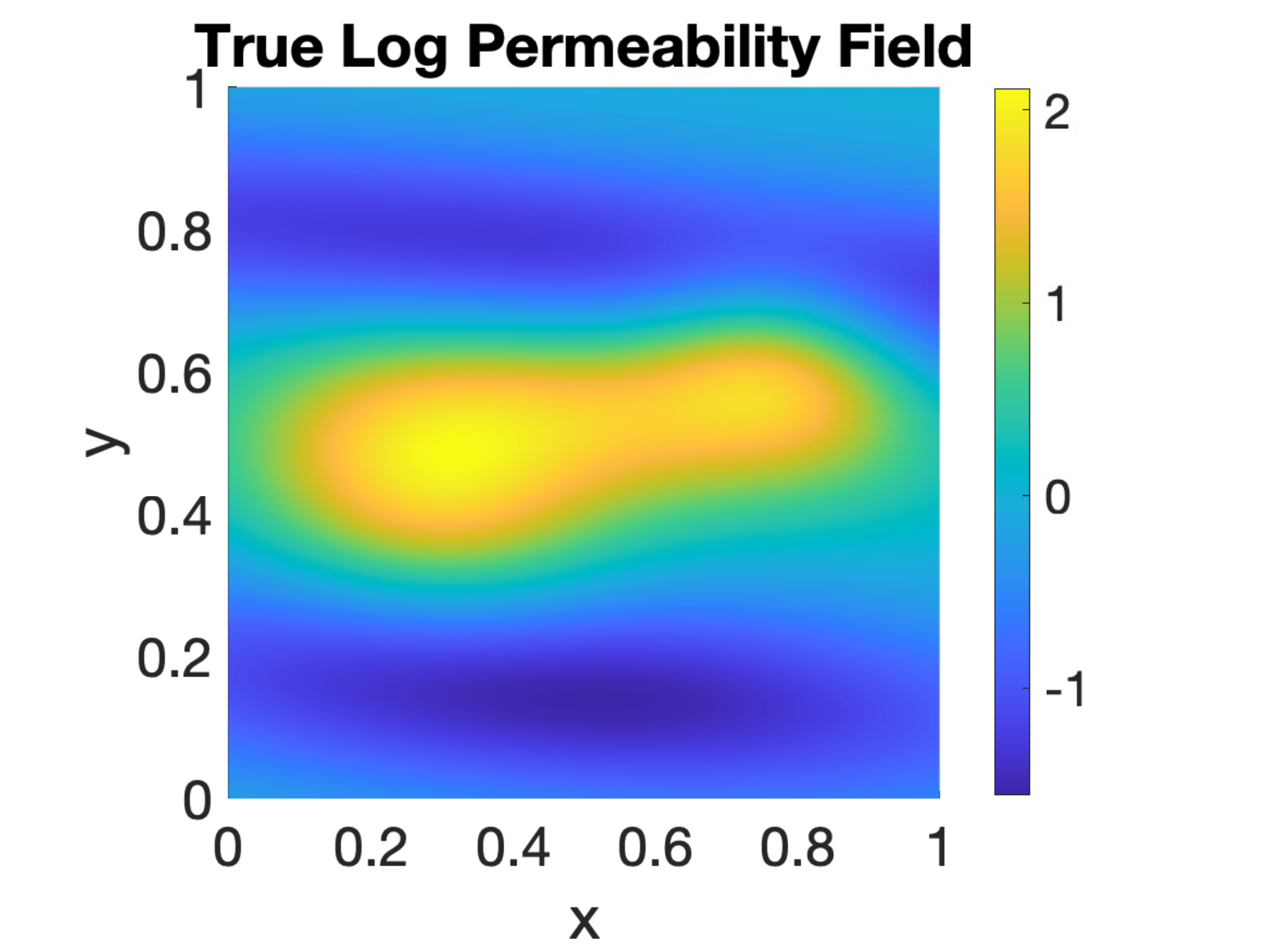}
  \caption{Left: initial iterate for the inverse problem; center: estimated log permeability field (after 75 iterations); right: true log permeability field used to generate the data.}
  \label{fig:perm_fields}
\end{figure}

Figure~\ref{fig:states} displays (top row) the pressure and time snapshots of the concentration computed using the estimated log permeability field alongside (bottom row) the corresponding data, $\{\vec{d}_c(t_i),\vec{d}_p\}$, plotted as a field rather than sparse data. The pressure Dirichlet conditions induce a flow from right to left. As a result of the high permeability region around $y=0.5$, the tracer, which enters the domains through sixteen injection sites, is advected toward two outflow regions around $(0,0.4)$ and $(0,0.6)$. The regions of greatest estimation error in Figure~\ref{fig:perm_fields} are unsurprising given the characteristics of the flow in Figure~\ref{fig:states}. Comparing the good fit of the state data in Figure~\ref{fig:states} with the log permeability field error in Figure~\ref{fig:perm_fields} highlights the ill-posedness of the problem, a characteristic commonly observed in practice.

\begin{figure}[h]
\centering
  \includegraphics[width=0.24\textwidth]{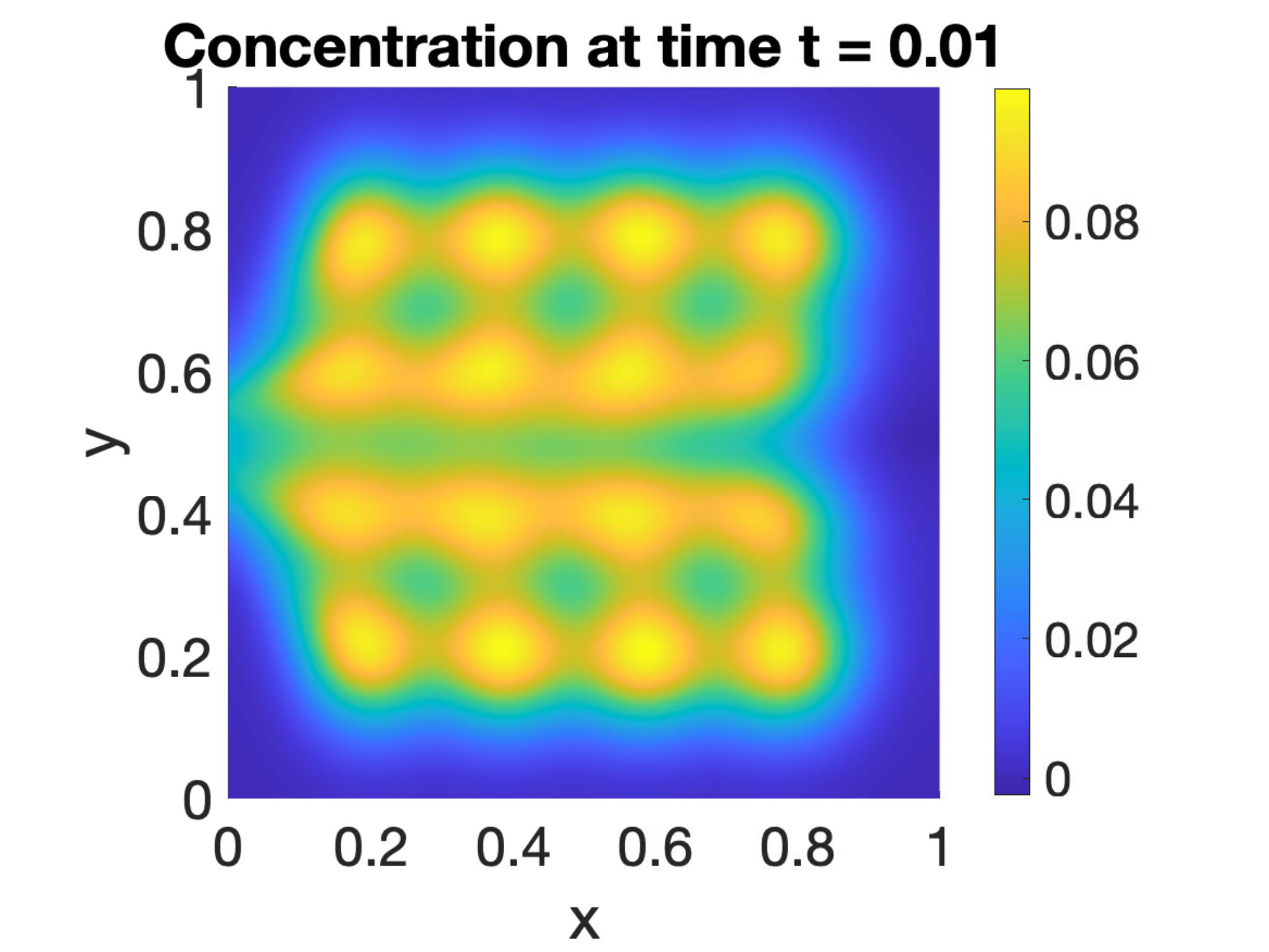}
    \includegraphics[width=0.24\textwidth]{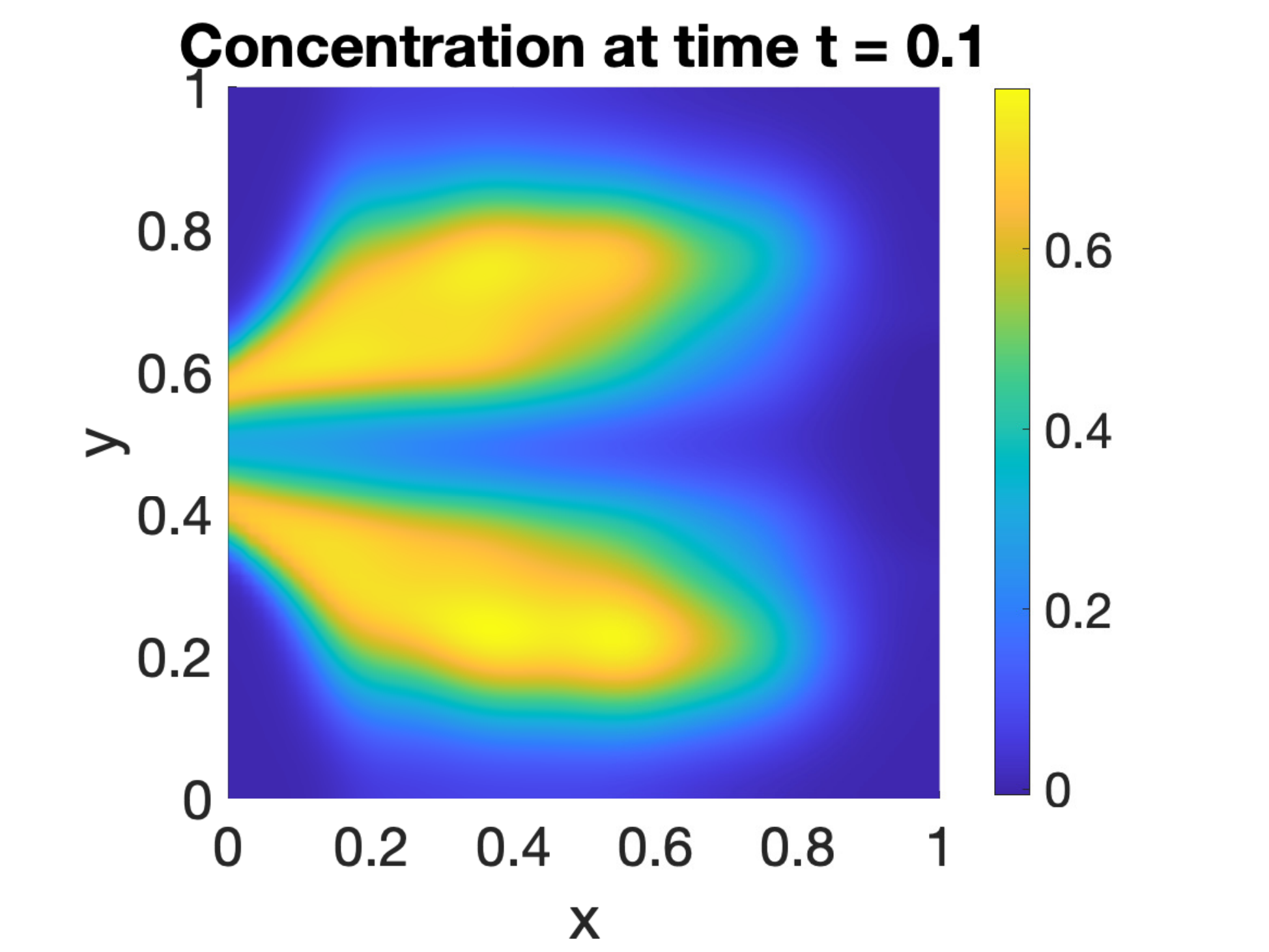}
      \includegraphics[width=0.24\textwidth]{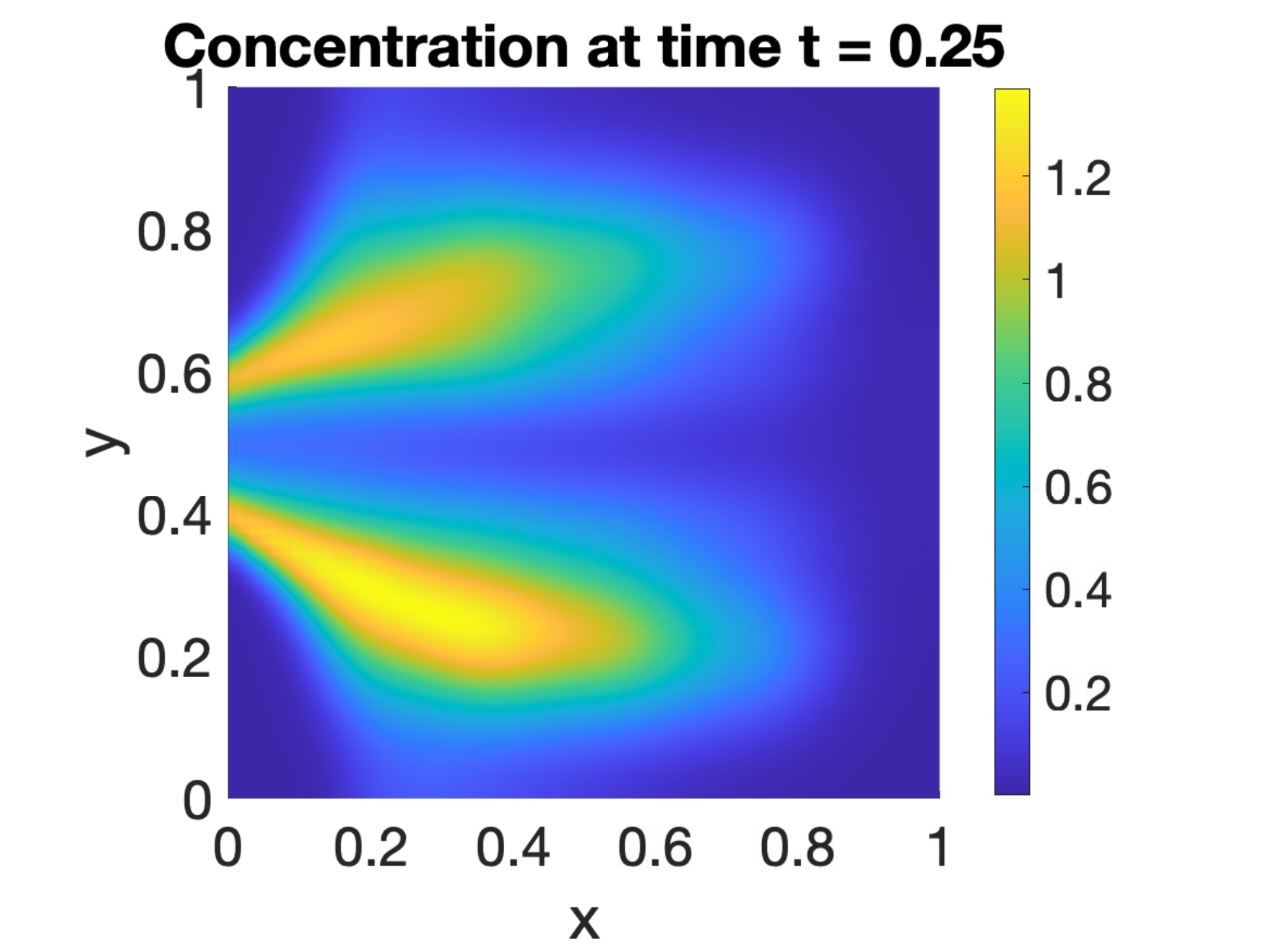}
       \includegraphics[width=0.24\textwidth]{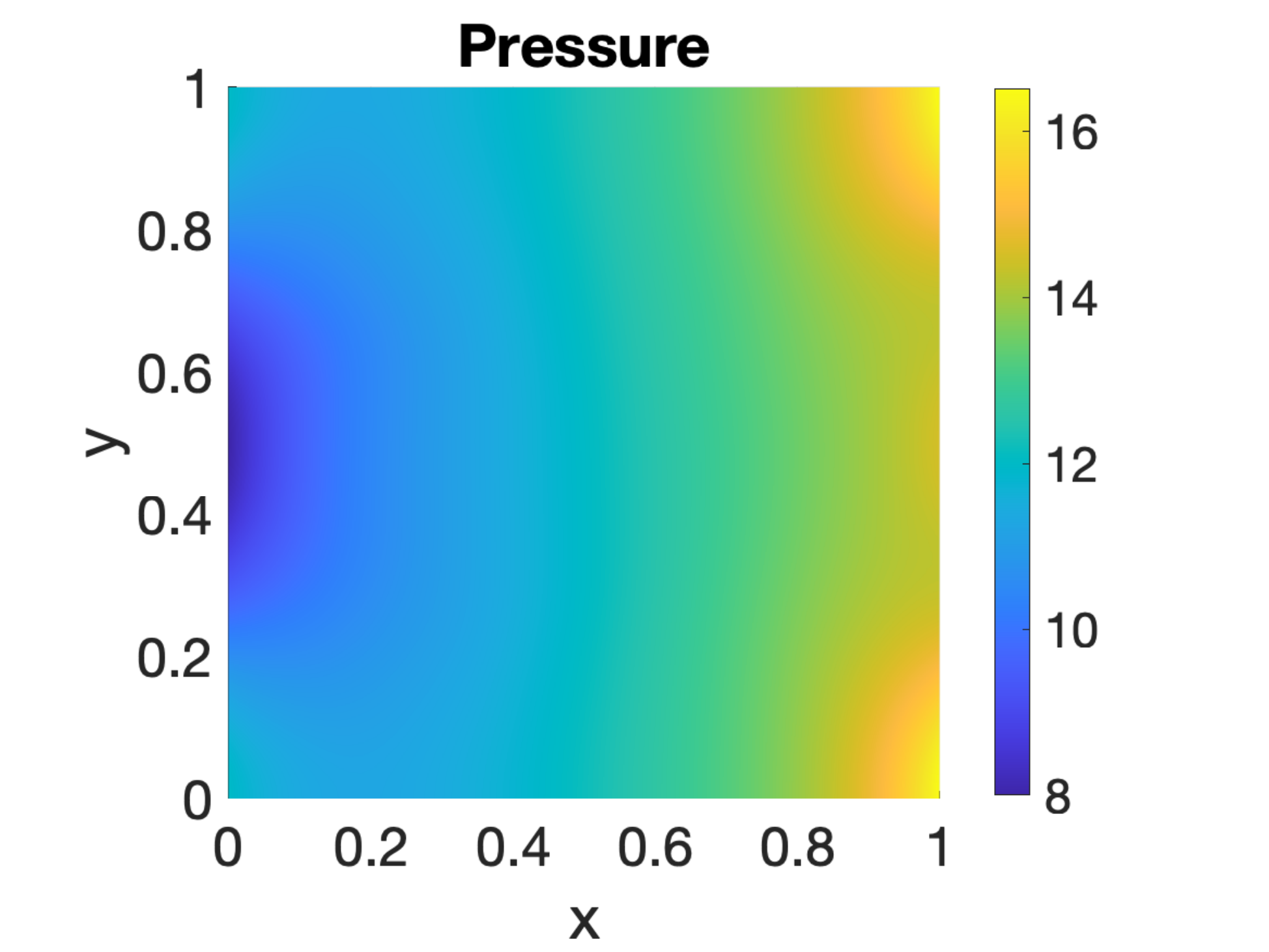}\\
         \includegraphics[width=0.24\textwidth]{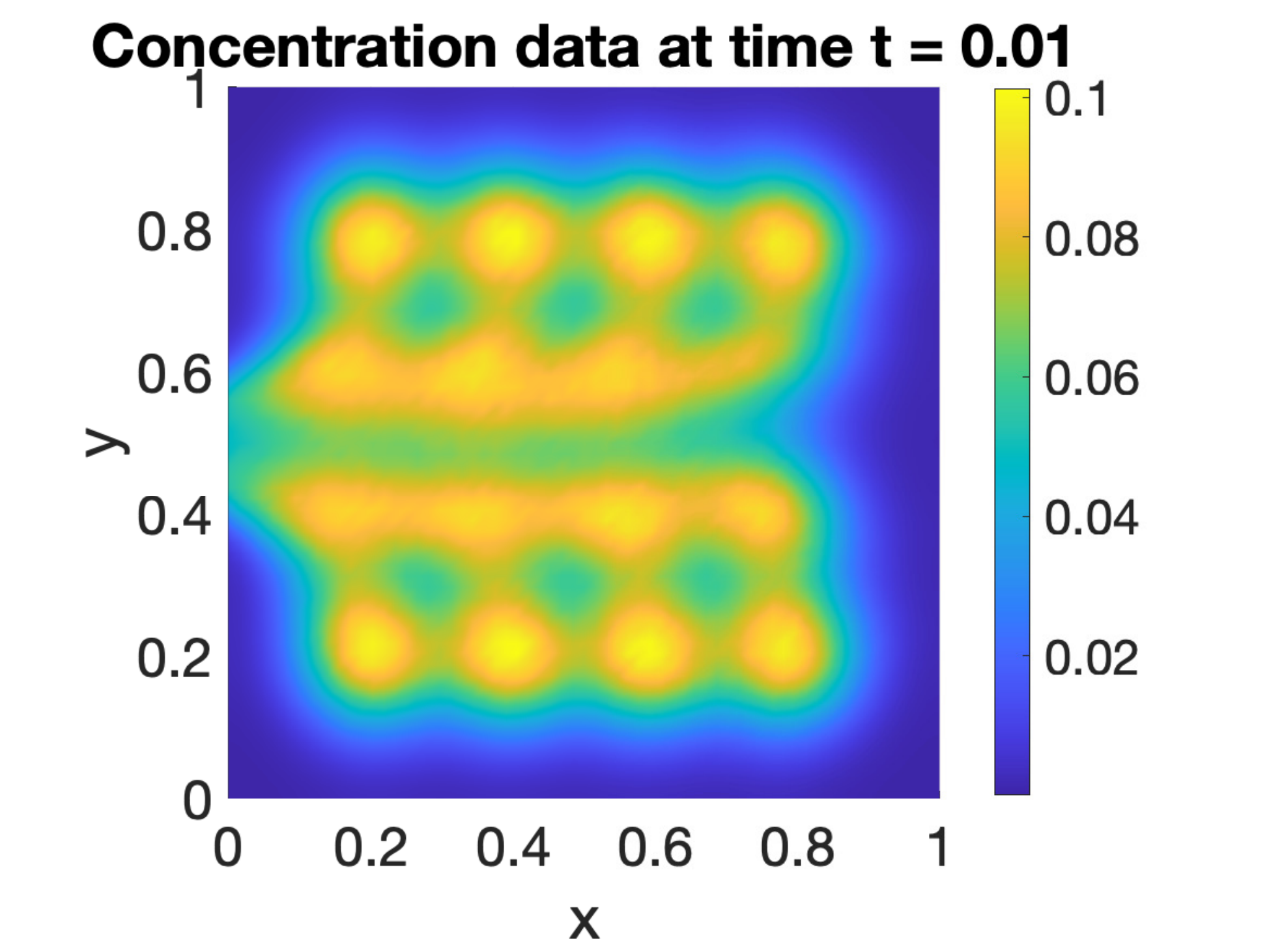}
    \includegraphics[width=0.24\textwidth]{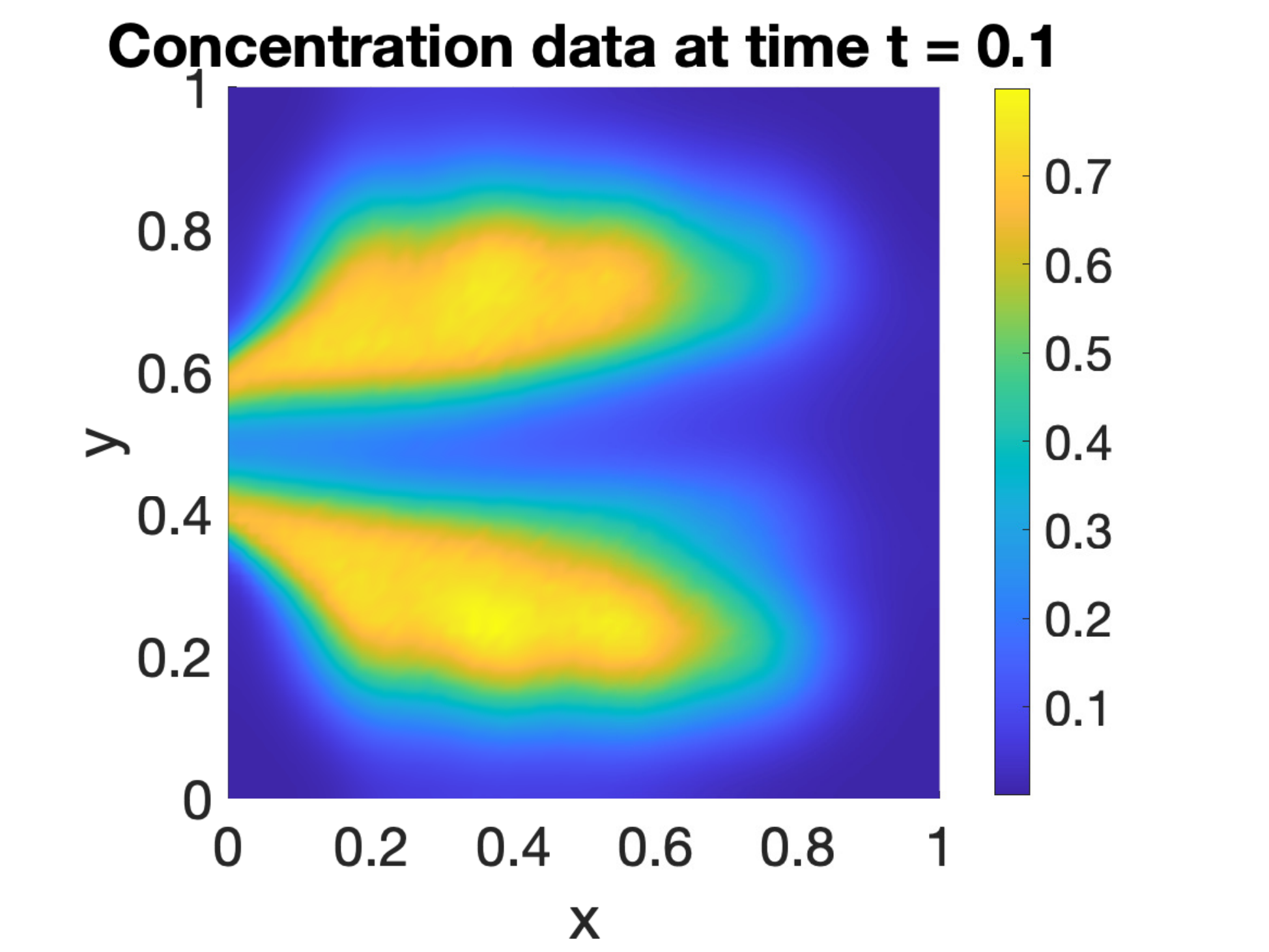}
      \includegraphics[width=0.24\textwidth]{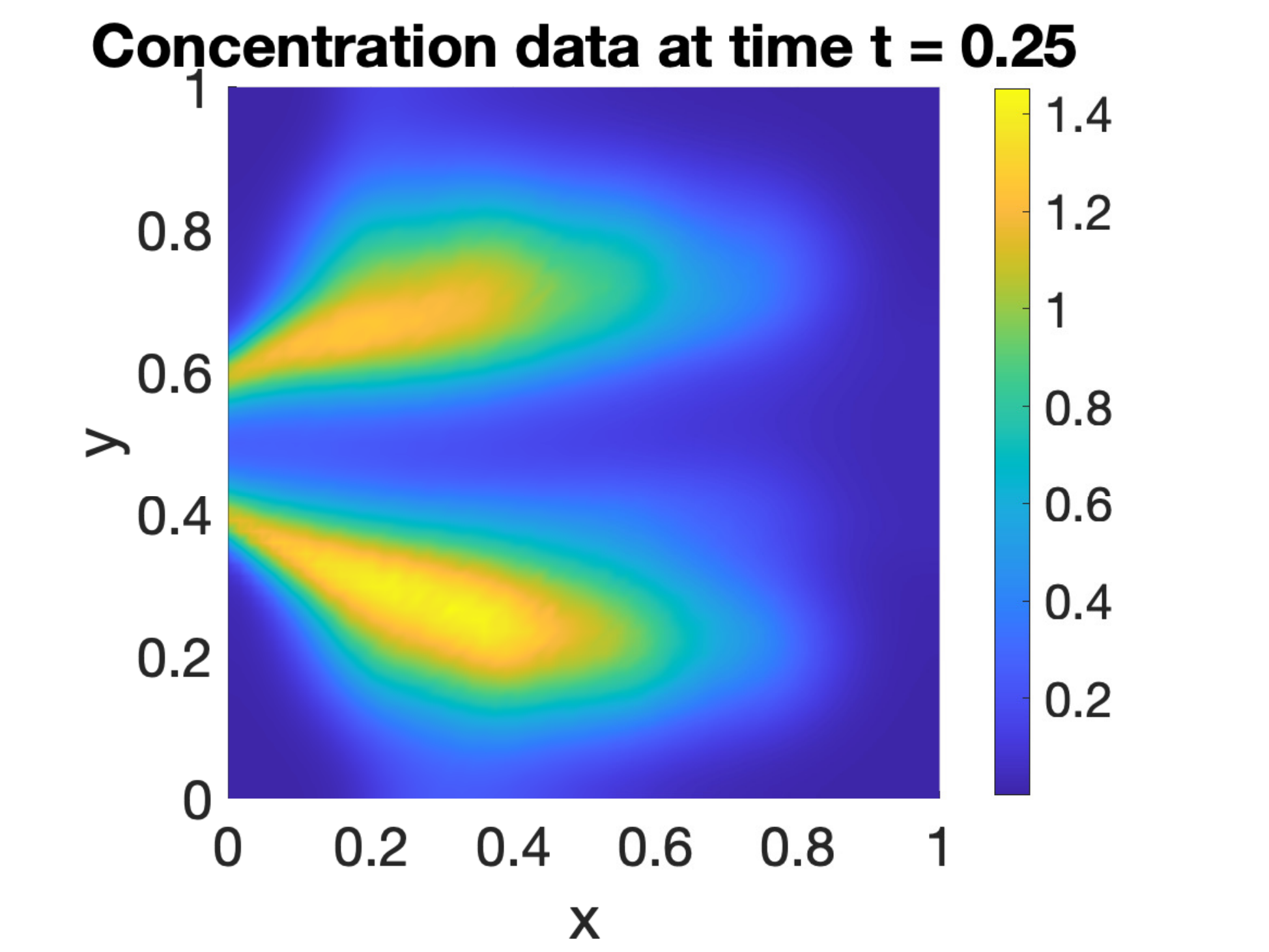}
       \includegraphics[width=0.24\textwidth]{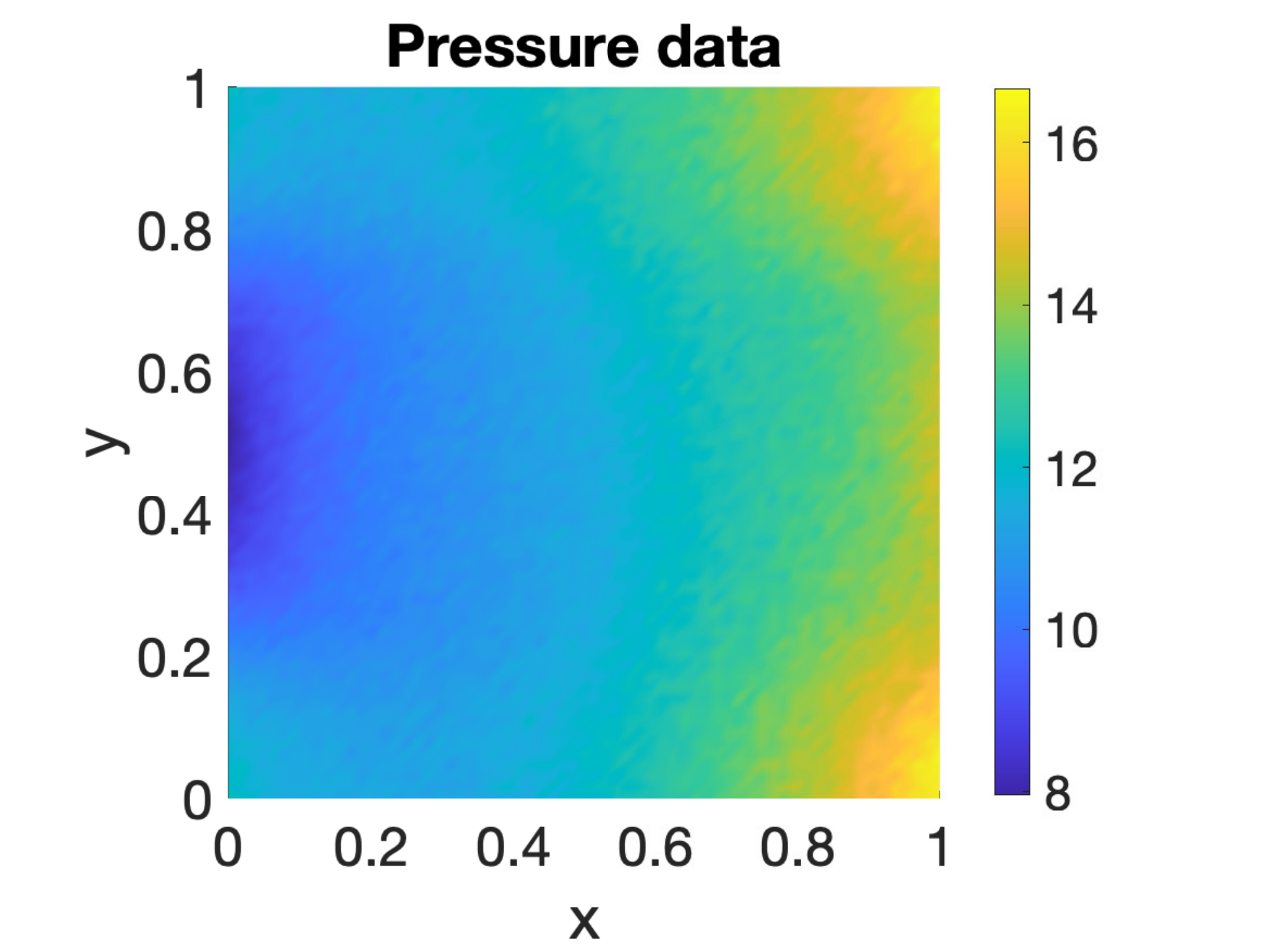}
  \caption{Pressure and time snapshots of concentration. The top row is computed by using the estimated log permeability field (center panel of Figure~\ref{fig:perm_fields}) while the bottom row is computed by using the true log permeability field (right panel of Figure~\ref{fig:perm_fields}) and adding noise. From left to right displays the time evolution of the concentration with the pressure in the rightmost panel.}
    \label{fig:states}
\end{figure}

\subsection{HDSA results} \label{ssec:hdsa}
There is no theoretical justification to use HDSA on the solution of this inverse problem without the developments of Section~\ref{sec:enabling_hdsa}. The suboptimal solution only reduces the gradient norm by two orders of magnitude despite significant computational efforts, and the Hessian is indefinite due to insufficient information in the regularization. This serves as an illustrative and motivating application for the contributions of this article.

The length scale parameter is set to $\alpha=0.5(\text{max}(\vec{z}^\star)-\text{min}(\vec{z}^\star))=1.4657$ as discussed in Subsection~\ref{subsec:first_order_update}.  We also repeated all computation (but omitted results for conciseness) with a larger value of $\alpha$ to confirm that the resulting analysis is robust with respect to $\alpha$.

We compute hyper-differential sensitivity indices using Algorithms~\ref{alg:overview} and~\ref{alg:GHEP}. To explore the choice of eigenvalue threshold, Algorithm~\ref{alg:GHEP} is executed with $p=20$, $r_0=4$, $\Delta r =8$ (the number of processors used), and $\lambda_{min}=0.1$. This choice of $\lambda_{min}$ allows us to study the robustness of the sensitivities with respect to the eigenvalue threshold. 

\subsubsection*{Likelihood informed subspace and eigenvalue threshold}
Figure~\ref{fig:evals} displays the generalized eigenvalues (on log scale). As mentioned in Section~\ref{sec:alg_overview}, the user must specify an eigenvalue threshold $\lambda_{min}$ to define the LIS. In order to explore possible thresholds, we compute all eigenvalues greater than $0.1$ (which corresponds to the misfit magnitude being $10\%$ of the regularization magnitude in the eigenvector direction), and consider different thresholds. The vertical lines in Figure~\ref{fig:evals} indicate three possible thresholds (with the horizontal axis indicating the value of $r$ in \eqref{eqn:sen_indices_formula}). The left panel of Figure~\ref{fig:sensitivities_robust} shows the LIS-hyper-differential sensitivity indices (computed using \eqref{eqn:sen_indices_formula}) on the vertical axis as a function of the number of generalized eigenvalues (the informed subspace dimension) on the horizontal axis. We observe that the magnitude of the largest sensitivity indices varies depending on the threshold but generally the conclusions we draw from the analysis (which parameters are most/least influential) are unchanged. Zooming in on the smaller sensitivity indices in the right panel of Figure~\ref{fig:sensitivities_robust} reveals a similar conclusion for the smallest sensitivity indices. There is some variability but the conclusions of the analysis remain valid. 

\begin{figure}[h]
\centering
  \includegraphics[width=0.45\textwidth]{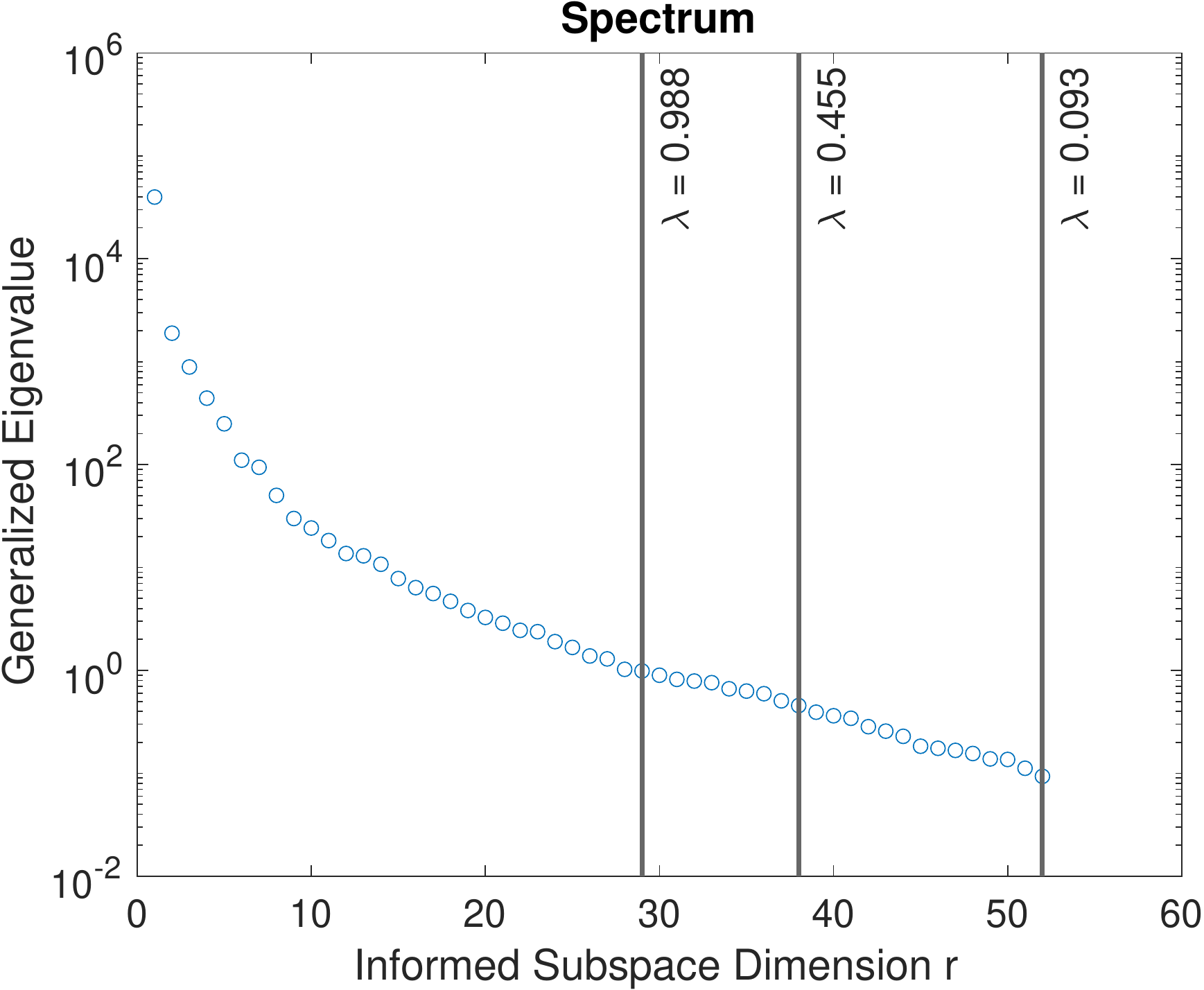}
  \caption{Log scale plot of the generalized eigenvalues. The vertical lines indicate potential eigenvalue thresholds as a frame of reference for assessing robustness.}
  \label{fig:evals}
\end{figure}

\begin{figure}[h]
\centering
    \includegraphics[width=0.45\textwidth]{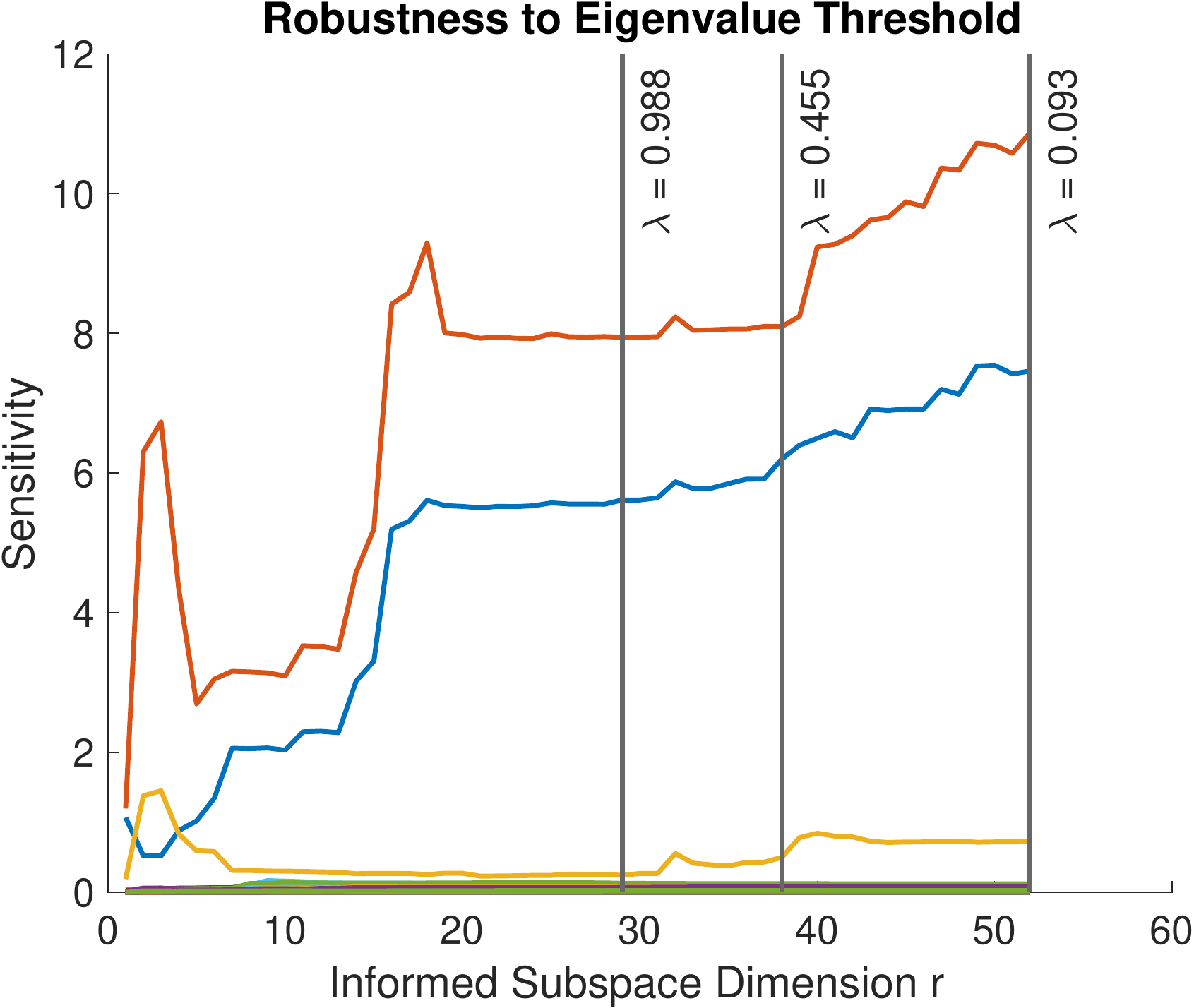}
     \includegraphics[width=0.45\textwidth]{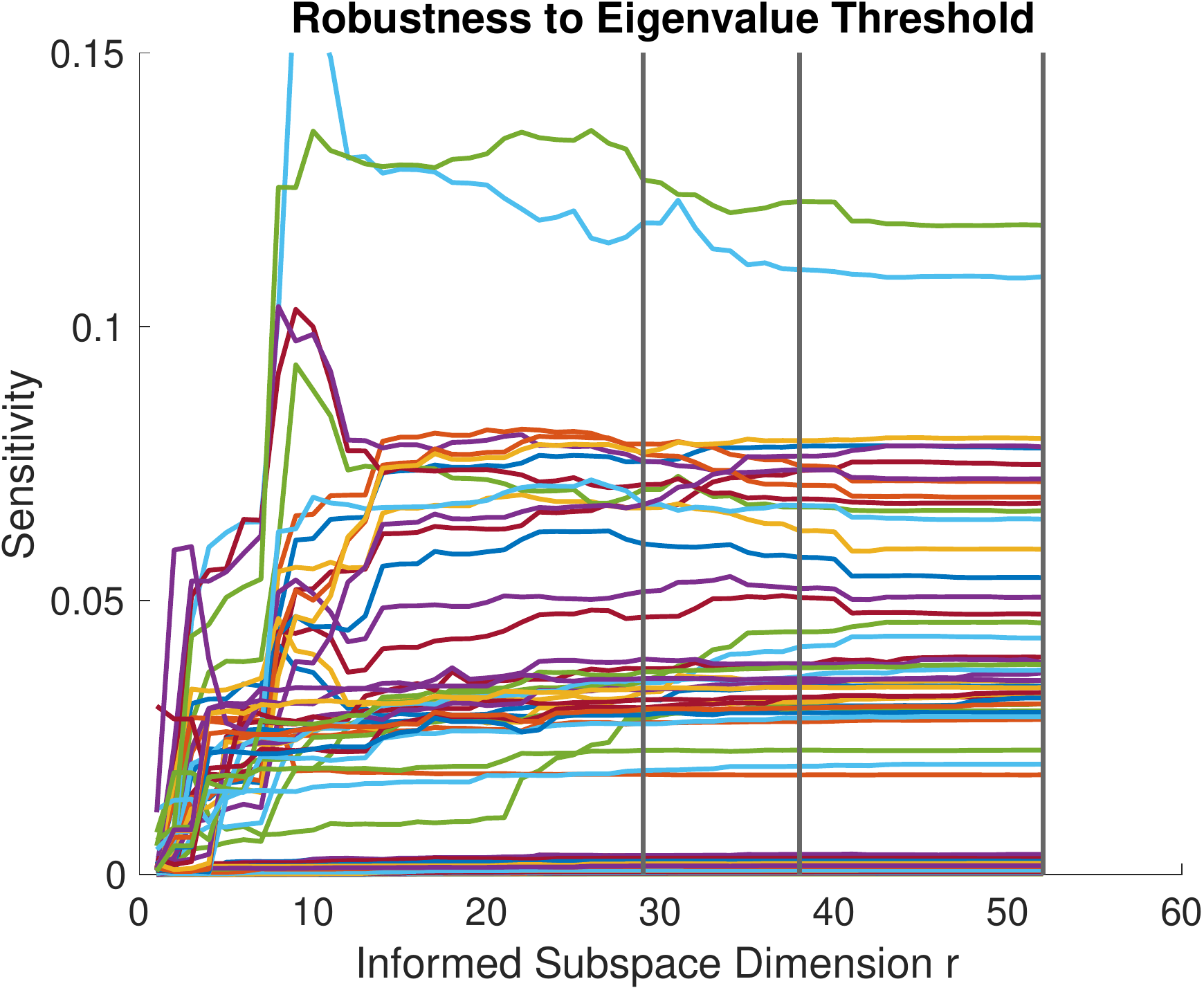}
  \caption{Left: dependence of the hyper-differential sensitivity indices (vertical axis) on the informed subspace dimension $r$ (horizontal axis); right: same plot as the left but zoomed in on the smaller sensitivity indices. The vertical lines indicate potential eigenvalue thresholds as a frame of reference for assessing robustness. The curves (which are different colors to aid visualization) correspond to different sensitivity indices.}
  \label{fig:sensitivities_robust}
\end{figure}

The spectral characteristics reveal low rank structure which is common in inverse problems. In particular, the misfit Hessian in $\mathbb R^{4225 \times 4225}$ is well characterized by a likelihood informed subspace of dimension $\mathcal O(50)$. In this example, $\vert \vert \mathcal P \vec{n} \vert \vert_{W_{\vec{z}}} = \mathcal O(10^{-4})$ and hence Theorems~\ref{thm:compare_S_and_tilde_S} and~\ref{thm:vary_alpha} ensure that the sensitivities computed with this LIS are not strongly influenced by the first order update.

\subsubsection*{Interpretation of sensitivity indices}

We take $\lambda_{min}=1$ due to its interpretation as the threshold when the misfit and regularization contributions are equal. The hyper-differential sensitivity indices are plotted in Figure~\ref{fig:sensitivities} (without the scalar sensitivity index for the diffusion coefficient which equals $0.0283$). Four different parameters (two pressure boundary conditions, a concentration source term, and diffusion coefficient) are easily compared in the HDSA framework. As discussed in subsection~\ref{ssec:solution}, the optimization problem is difficult even with these parameters fixed. Performing joint inversion on all parameters is extremely challenging and thus underscores the utility of HDSA to provide quantitative insights for complex systems with many sources of uncertainty. 

There are two particularly large sensitivity indices near $y=0.5$ on the $x=1$ pressure Dirichlet boundary. Inspection of $\mathcal B$ acting on the basis functions corresponding to these indices identifies this sensitivity as a large change in the estimated log permeability field in the $(1,0.5)$ region. This result highlights the fact that the log permeability field is not well informed by the tracer in this region (the tracer is flowing away from it) and as a result the estimated log permeability field is highly dependent on the pressure data which is strongly influence by the boundary condition near $x=1$. On the $x=0$ pressure Dirichlet boundary, we observe greater sensitivity corresponding to the lower permeability regions as the tracer outflow is determined by the combination of the permeability and low pressure regions on the boundary. The tracer injection sensitivities are generally small, indicating that errors in the tracer injection will not have a significant effect on the log permeability field estimate.
 
\begin{figure}[h]
\centering
  \includegraphics[width=0.3\textwidth]{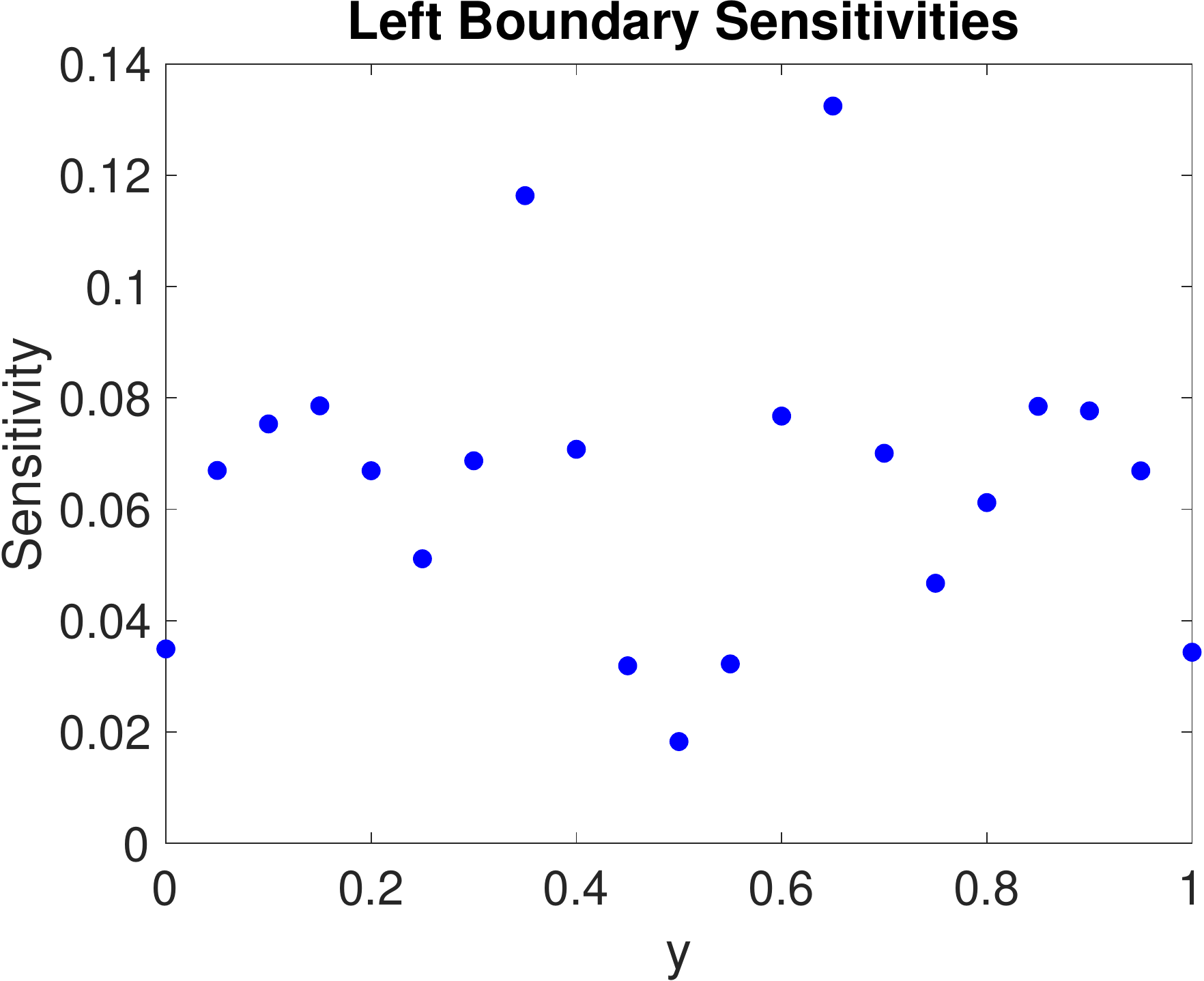}
    \includegraphics[width=0.285\textwidth]{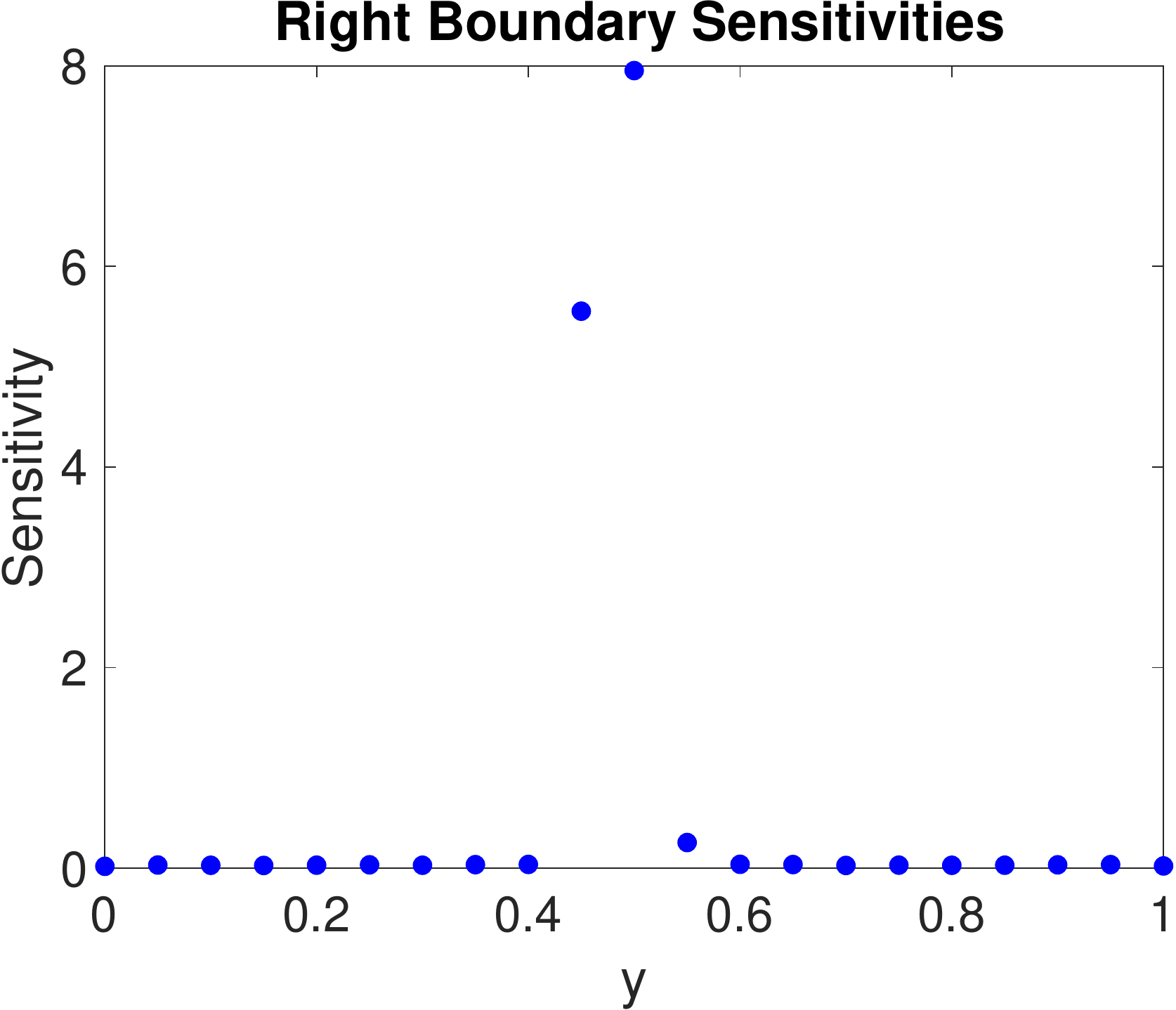}
      \includegraphics[width=0.315\textwidth]{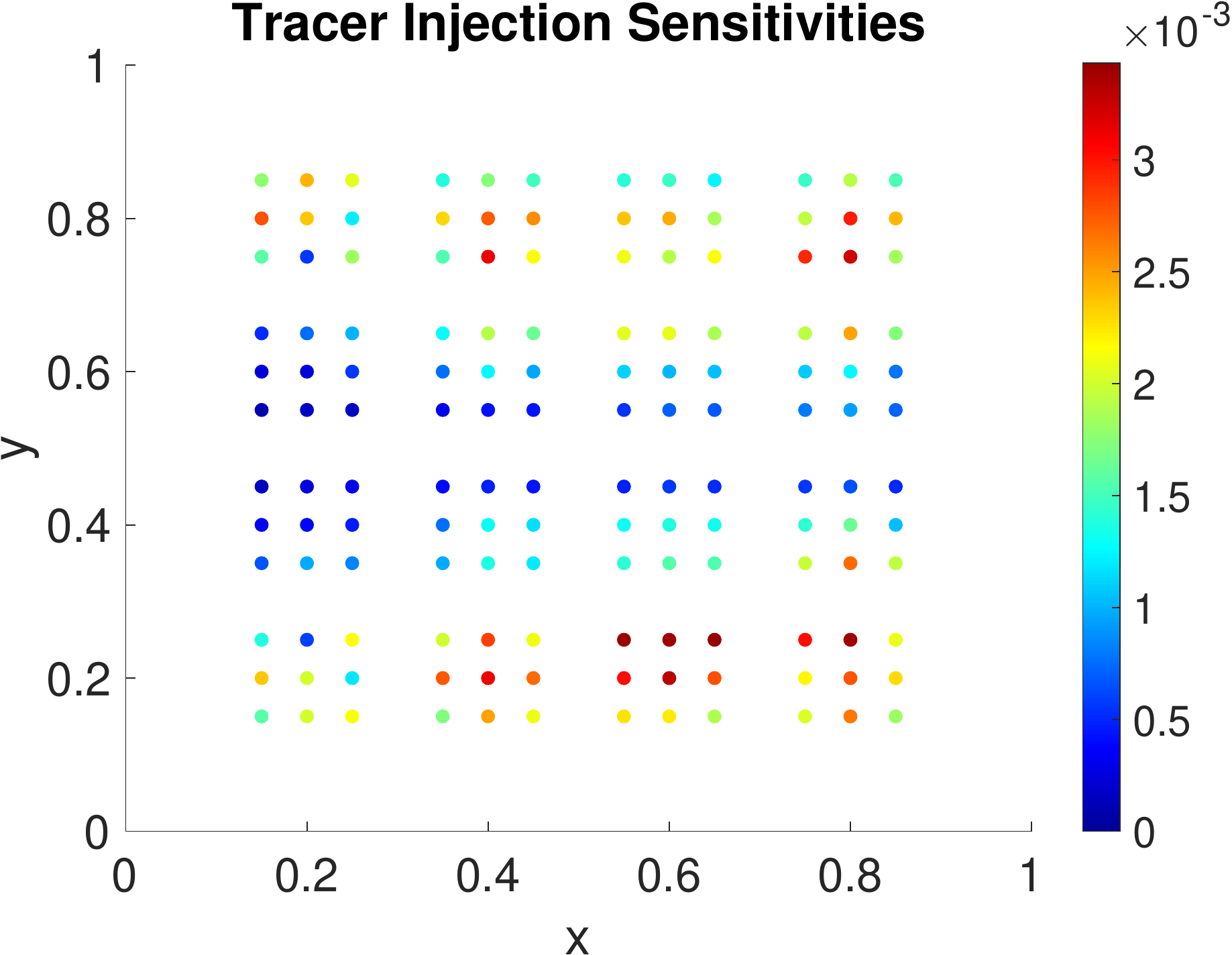}
  \caption{Hyper-differential sensitivity indices with $\lambda_{min}=1$. Left: $x=0$ pressure Dirichlet boundary condition sensitivities; center: $x=1$ pressure Dirichlet boundary condition sensitivities; right: tracer injection sensitivities.}
  \label{fig:sensitivities}
\end{figure}

The results in Figure~\ref{fig:sensitivities} may be applied for model development, experimental design, and uncertainty quantification. The influence of the pressure Dirichlet boundary conditions indicates the importance of the pressure difference driving the flow and how the log permeability field reconstruction is dependent on the features of the velocity field. The high sensitivity around $x=1$ and $y=0.5$ highlights the importance of characterizing the interaction between this region of the boundary and the log permeability field. Observing such sensitivities in practice alerts the analyst of potential concerns which may be alleviated by collecting data and/or improving modeling around the boundary. 

\section{Conclusion} \label{sec:conclusion}
Large-scale ill-posed inverse problems are challenging in many respects. Uncertainty in model parameters that are fixed (out of computational necessity) introduce uncertainty in the solution of the inverse problem which is not easily quantified in many applications. Through a likelihood informed subspace projection and development of a posteriori updates, we have enabled the use of HDSA for ill-posed inverse problems for which optimizer convergence is challenging.

Our developments provides a pragmatic approach to analyze the dependence of the optimal solution on the auxiliary model parameters. For challenging inverse problems plagued by high dimensional parameters and complex nonlinearities, HDSA provides a computationally efficient and quantitative approach to understand the complex interactions between parameters. By extending the framework to optimization problem with suboptimal solutions, we have laid a theoretical basis to use HDSA on many inverse problems arising in practice where convergence issues prohibited its use previously. Our analysis supports modeling, experimental design, and characterization of uncertainty. In particular, it may aid in understanding parameter dependences to facilitate modeling and experimental design to better characterize them. Moreover, understanding parameter dependencies is crucial to enable forward uncertainty quantification and system design under uncertainty since these dependences define a low dimensional manifold in the parameter space which must be explored in such analysis.

This article focused on optimization-based approaches to inverse problems; however, a Bayesian approach is attractive since it provides a wholistic framework for uncertainty quantification. Work is ongoing to develop and interpret HDSA for Bayesian inverse problems through its application to both the maximum a posteriori probability point and measures of posterior uncertainty~\cite{Sunseri_2022}. A key computational component of HDSA is efficient Hessian vector products. Such Hessian vector products are crucial for other aspects of large-scale inverse problems \cite{ghattas_infinite_dim_bayes_2,hierarchical_hessian_approx} and are topics of future research. There are opportunities to couple the algorithmic framework for LIS-HDSA with existing approaches for Hessian based exploration of the Bayesian posterior. 

\appendix
\section{Theorem Proofs}
\section*{Theorem~\ref{thm:sen_indices_likelihood_informed}} 
\begin{proof} 
Let $\{\lambda_j,\vec{v}_j\}_{j=1}^m$ denote the eigenvalues and eigenvectors (which are $\mathcal H_R$ orthonormal) of~\eqref{eqn:likelihood_informed_gen_eig}, i.e.\\ $\mathcal H_M \vec{v}_j = \lambda_j \mathcal H_R \vec{v}_j$. Then using the spectral representation of $ \mathcal H_M$ we have
\begin{eqnarray*}
\mathcal H = \sum\limits_{j=1}^m \lambda_j \mathcal H_R \vec{v}_j \vec{v}_j^T \mathcal H_R + \mathcal H_R
\end{eqnarray*}
and by the Sherman-Morrison-Woodbury formula we have
\begin{eqnarray*}
\mathcal H^{-1} = \mathcal H_R^{-1} - \sum\limits_{j=1}^m \frac{\lambda_j}{1+\lambda_j} \vec{v}_j \vec{v}_j^T.
\end{eqnarray*}
Noting that $\mathcal P \vec{v}_k = \vec{v}_k$ for $k=1,2,\dots,r$ and $\mathcal P \vec{v}_k =0$ for $k=r+1,r+2,\dots,m$, we have
\begin{eqnarray*}
S_i = \left\vert \left\vert \mathcal P \left( \mathcal H_R^{-1} - \sum\limits_{i=1}^m \frac{\lambda_i}{1+\lambda_i} \vec{v}_i \vec{v}_i^T \right) \mathcal B \vec{e}_i \right\vert \right\vert_{W_{\vec{z}}} =  \left\vert \left\vert  \sum\limits_{j=1}^r \frac{1}{1+\lambda_j} \vec{v}_j \left(\vec{v}_j^T \mathcal B \vec{e}_i \right)\right\vert \right\vert_{W_{\vec{z}}}.
\end{eqnarray*}
Expressing the $W_{\vec{z}}$ norm as the square root of the $W_{\vec{z}}$ inner product, i.e. $\vert \vert \cdot \vert \vert_{W_{\vec{z}}} = \sqrt{ (\cdot ,\cdot)_{W_{\vec{z}}}}$, and manipulating algebra completes the proof.
\end{proof}

\section*{Theorem~\ref{thm:optimal_update_1}}\\
We derive a closed form solution to the optimization problem (from~\eqref{R_update_gen_opt})
\begin{align}
\label{R_update_quad_opt}
& \min\limits_{\tilde{R} \in Q} \vert \vert \tilde{R} \vert \vert_{L^1(\mu)} \\
\text{s.t.} \ & \nabla_{\vec{z}} \tilde{R}(\vec{z}^\star) = -\vec{g} \nonumber
\end{align}
where $Q$ is the set of nonnegative convex quadratic functions from $\mathbb R^m$ to $\mathbb R$ and $\vec{g} = \nabla_{\vec{z}} J(\vec{z}^\star;\overline{\vec{\theta}})$. 

A general expression for a function belonging to $Q$ and satisfying $\nabla_{\vec{z}} \tilde{R}(\vec{z}^\star) = - \vec{g}$ is given by 
\begin{eqnarray}
\label{quad_rep}
\tilde{R}(\vec{z}) = \frac{1}{2}(\vec{z}-\vec{z}^\star)^T \mathcal{A} (\vec{z}-\vec{z}^\star) - \vec{z}^T \vec{g} + C
\end{eqnarray}
where $\mathcal{A} \in \mathbb R^{m \times m}$ is symmetric (since it is a Hessian) positive semidefinite (to ensure non-negativity) with $\vec{g}$ in the range of $\mathcal{A}$, and $C \in \mathbb R$ is chosen to ensure that $\tilde{R}$ is non-negative. The condition that $\vec{g}$ is in the range of $\mathcal{A}$ ensures that 
\begin{eqnarray}
\label{finding_C}
\frac{1}{2}(\vec{z}-\vec{z}^\star)^T \mathcal{A} (\vec{z}-\vec{z}^\star) - \vec{z}^T \vec{g}
\end{eqnarray}
is bounded below and hence a $C$ exists that enforces $\tilde{R} \ge 0$.  

To solve \eqref{R_update_quad_opt} we need to find a matrix $\mathcal{A}$ and a scalar $C$. Since we seek to minimize the norm of $\tilde{R}$ while enforcing $\tilde{R} \ge0$, we express $C$ as a function of $\mathcal{A}$ by setting it equal to the minimum value of \eqref{finding_C}. Setting the gradient of \eqref{finding_C} equal to zero yields the system of equations
\begin{eqnarray*}
\mathcal{A}(\vec{z}_0 - \vec{z}^\star) = \vec{g}
\end{eqnarray*}
which is guaranteed to admit a solution because $ \vec{g}$ is in the range of $\mathcal{A}$. Since, in general, $\mathcal{A}$ is positive semidefinite, the set of critical points is given by $\vec{z}^\star + \mathcal{A}^\dagger \vec{g} + N(\mathcal{A})$, where $\mathcal{A}^\dagger$ denotes the pseudo-inverse of $\mathcal{A}$ and $N(\mathcal{A})$ denotes the null space of $\mathcal{A}$. Plugging these critical points into \eqref{finding_C} yields that the smallest $C$, as a function of $\mathcal{A}$, such that $\tilde{R} \ge 0$, is given by 
\begin{eqnarray}
\label{parameterized_C}
C = \frac{1}{2} \vec{g}^T \mathcal{A}^{\dagger} \vec{g} +{\vec{z}^\star}^T \vec{g}.
\end{eqnarray}

Having written $C$ as a function of $\mathcal{A}$, we may find the optimal quadratic update by minimizing over $\mathcal{A}$. By exploiting non-negativity of $\tilde{R}$ to drop the absolute value in the $L^1$ norm, properties of the Gaussian probability measure, and ignoring terms which do not depend on $\mathcal{A}$, we arrive at the optimization problem
\begin{align} 
\label{quad_rep_opt_prob}
&\min\limits_{\mathcal{A} \in \mathcal S_m^+}  f(\mathcal{A}):=\alpha^2 Tr(\mathcal{A}) + \vec{g}^T \mathcal{A}^\dagger \vec{g} \\
& s.t. \text{ } \vec{g} \in R(\mathcal{A}) \nonumber
\end{align}
where $\mathcal S_m^+$ denotes the set of $m \times m$ positive semidefinite matrices with real entries and $Tr(\mathcal{A})$ denotes the trace of $\mathcal{A}$. 

To derive a closed form expression for~\eqref{quad_rep_opt_prob} (and equivalently \eqref{R_update_quad_opt}), we prove a sequence of lemmas to arrive at a unique solution. Using the spectral theorem for symmetric matrices we may, without loss of generality, assume that the optimal solution of \eqref{quad_rep_opt_prob} is a matrix $\mathcal{H}_{\tilde{R}}$ of the form
\begin{eqnarray}
\label{spec_rep}
\mathcal{H}_{\tilde{R}}  = \sum\limits_{j=1}^m \lambda_j \vec{v}_j \vec{v}_j^T
\end{eqnarray}
where $\lambda_1 \ge \lambda_2 \ge \cdots \ge \lambda_m \ge 0$ are the non-negative eigenvalues and $\{\vec{v}_j\}_{j=1}^m$ are the orthonormal eigenvectors of $\mathcal H_{\tilde{R}}$.

Let $r \in \{1,2,\dots,m\}$ be the rank of $\mathcal{H}_{\tilde{R}}$. Plugging \eqref{spec_rep} into \eqref{quad_rep_opt_prob}, writing the trace as the sum of eigenvalues, and using the spectral decomposition to express the pseudo-inverse, we find that
\begin{eqnarray*}
f(\mathcal{H}_{\tilde{R}})=\alpha^2 \sum\limits_{j=1}^r \lambda_j + \sum\limits_{j=1}^r \frac{1}{\lambda_j} (\vec{g}^T\vec{v}_j)^2 . 
\end{eqnarray*}
The subsequent lemmas characterize the eigenvalues $\{\lambda_j\}_{j=1}^r$ and eigenvectors $\{\vec{v}_j\}_{j=1}^r$ by utilizing the fact that $f(\mathcal{H}_{\tilde{R}}) \le f(\mathcal{A})$ for all $\mathcal{A} \in \mathcal S_m^+$ which satisfy $\vec{g} \in R(\mathcal{A})$.

Let $\lambda_1$ have algebraic multiplicity $\ell$, i.e. $\lambda_1=\lambda_2=\cdots=\lambda_\ell > \lambda_{\ell+1}$ (or $\lambda_j=\lambda_1$, $j=1,2,\dots,m$ if $\ell=m$). Then we have the following.
\begin{lemma}
\label{lemma:leading_eval}
\begin{eqnarray*}
\vec{g} \in \text{span}\{\vec{v}_1,\vec{v}_2,\dots,\vec{v}_\ell\}.
\end{eqnarray*}
\end{lemma}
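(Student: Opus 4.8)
The plan is to argue by contradiction, exploiting the fact that the trace term $\alpha^2 \sum_{j=1}^r \lambda_j$ in the objective value $f(\mathcal{H}_{\tilde R})$ depends only on the spectrum of the optimal matrix and \emph{not} on the orientation of its eigenvectors relative to $\vec g$. Consequently, if I hold the eigenvalues fixed and merely reassign the eigenvectors, the only term that can change is $\sum_{j=1}^r (\vec g^T \vec v_j)^2 / \lambda_j$. I intend to show that this term is strictly larger than its minimal possible value unless $\vec g$ lies entirely in the top eigenspace $\mathrm{span}\{\vec v_1,\dots,\vec v_\ell\}$, which would contradict the optimality of $\mathcal{H}_{\tilde R}$ for~\eqref{quad_rep_opt_prob}.

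First I would record the key scalar inequality. Writing $c_j := \vec g^T \vec v_j$ and using that the constraint $\vec g \in R(\mathcal H_{\tilde R})$ forces $c_j = 0$ for $j > r$ (so that $\|\vec g\|_2^2 = \sum_{j=1}^r c_j^2$), the pseudo-inverse term obeys
\[
\sum_{j=1}^r \frac{c_j^2}{\lambda_j} \;\ge\; \frac{1}{\lambda_1}\sum_{j=1}^r c_j^2 \;=\; \frac{\|\vec g\|_2^2}{\lambda_1},
\]
since $\lambda_1$ is the largest eigenvalue and hence $1/\lambda_j \ge 1/\lambda_1$ for every $j \le r$. The crucial point is the equality characterization: equality holds if and only if $c_j = 0$ for every index $j$ with $\lambda_j < \lambda_1$, i.e.\ for every $j > \ell$, which is exactly the condition $\vec g \in \mathrm{span}\{\vec v_1,\dots,\vec v_\ell\}$.

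Next I would build a feasible competitor that attains the lower bound. Because $\vec g \ne \vec 0$ (the update is constructed precisely because the gradient is nonzero, so $r \ge 1$ and $\lambda_1 > 0$), I can extend $\vec g / \|\vec g\|_2$ to an orthonormal basis $\{\vec w_1,\dots,\vec w_m\}$ of $\mathbb R^m$ with $\vec w_1 = \vec g / \|\vec g\|_2$, and define $\mathcal A := \sum_{j=1}^m \lambda_j \vec w_j \vec w_j^T$, reusing the \emph{same} eigenvalues in the same order. Then $\mathcal A \in \mathcal S_m^+$, $Tr(\mathcal A) = Tr(\mathcal H_{\tilde R})$, and $\vec g = \|\vec g\|_2\,\vec w_1 \in R(\mathcal A)$ since $\lambda_1 > 0$, so $\mathcal A$ is feasible for~\eqref{quad_rep_opt_prob}; a direct computation gives $\vec g^T \mathcal A^\dagger \vec g = \|\vec g\|_2^2/\lambda_1$.

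Finally I would close the contradiction. Suppose $\vec g \notin \mathrm{span}\{\vec v_1,\dots,\vec v_\ell\}$; then the inequality above is \emph{strict}, so
\[
f(\mathcal A) = \alpha^2 \sum_{j=1}^r \lambda_j + \frac{\|\vec g\|_2^2}{\lambda_1} \;<\; \alpha^2 \sum_{j=1}^r \lambda_j + \sum_{j=1}^r \frac{c_j^2}{\lambda_j} = f(\mathcal H_{\tilde R}),
\]
contradicting the optimality of $\mathcal H_{\tilde R}$. Hence $\vec g \in \mathrm{span}\{\vec v_1,\dots,\vec v_\ell\}$, as claimed. I expect the only delicate points to be the equality/strictness characterization in the key inequality and the verification that the rearranged matrix $\mathcal A$ is genuinely feasible (positive semidefinite with $\vec g$ in its range); both become routine once the spectrum is held fixed, which is the organizing idea of the whole argument.
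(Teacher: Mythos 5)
Your proof is correct and takes essentially the same route as the paper's: both arguments rest on the fact that the trace term depends only on the spectrum, so writing $\gamma_j = (\vec{g}^T\vec{v}_j)^2/\Vert\vec{g}\Vert_2^2$ with $\sum_{j=1}^r \gamma_j = 1$, the weighted sum $\sum_{j=1}^r \gamma_j/\lambda_j$ is bounded below by $1/\lambda_1$ with equality exactly when the mass of $\vec{g}$ lies in the top eigenspace. Your explicit rearranged competitor $\mathcal{A}$ and the strictness characterization simply make rigorous the step the paper states informally (``$f(\mathcal{H}_{\tilde{R}})$ is minimized when $\gamma_i=0$ for $i>\ell$''), so this is the same argument, executed slightly more carefully.
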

\begin{proof}
We may rewrite the objective function as
\begin{eqnarray*}
\label{eq:prop1}
f(\mathcal{H}_{\tilde{R}})=\alpha^2 \sum\limits_{j=1}^r \lambda_j + \vert \vert \vec{g} \vert \vert_2^2 \sum\limits_{j=1}^r \gamma_j \frac{1}{\lambda_j}
\end{eqnarray*}
where $\gamma_j= (\vec{g}^T\vec{v}_j)^2/\vert \vert \vec{g} \vert \vert_2^2 \in [0,1]$ and $\sum_{i=1}^r \gamma_i=1$ by Parseval's identity. Because of the ordering of the eigenvalues, $f(\mathcal{H}_{\tilde{R}})$ is minimized when $\gamma_i=0$ for $i>\ell$. Hence, $(\vec{g}^T\vec{v}_j)=0$ for $j>\ell$. Since $\{\vec{v}_j\}_{j=1}^m$ is an orthonormal basis for $\mathbb R^m$, 
\begin{eqnarray*}
\vec{g} = \sum\limits_{j=1}^m (\vec{g}^T\vec{v}_j)\vec{v}_j = \sum\limits_{j=1}^\ell (\vec{g}^T\vec{v}_j)\vec{v}_j .
\end{eqnarray*}
\hspace{1 mm}
\end{proof}

Applying Lemma~\ref{lemma:leading_eval}, we can simplify $f(\mathcal{H}_{\tilde{R}})$ to get
\begin{eqnarray}
\label{obj_reduced_1}
f(\mathcal{H}_{\tilde{R}})=\alpha^2 \sum\limits_{j=1}^r \lambda_j + \vert \vert \vec{g} \vert \vert_2^2 \frac{1}{\lambda_1} . 
\end{eqnarray}
\begin{lemma}
\label{lemma:leading_eval2}
$\lambda_j=0$ for $j>\ell$.
\end{lemma}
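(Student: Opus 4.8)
The plan is to exploit the optimality of $\mathcal{H}_{\tilde{R}}$ directly: I will show that any eigenvalue $\lambda_j$ with $j>\ell$ that is strictly positive can be zeroed out to produce a strictly smaller, still feasible objective, which is impossible at a minimizer. Since the first $\ell$ eigenvalues coincide with $\lambda_1>0$, we automatically have $\ell \le r$, so it suffices to prove $r=\ell$.

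First I would record the structure already uncovered. By Lemma~\ref{lemma:leading_eval}, $\vec{g}$ lies in $\text{span}\{\vec{v}_1,\dots,\vec{v}_\ell\}$, which is precisely why the objective collapsed to the form~\eqref{obj_reduced_1}, namely $f(\mathcal{H}_{\tilde{R}}) = \alpha^2 \sum_{j=1}^r \lambda_j + \vert \vert \vec{g} \vert \vert_2^2/\lambda_1$. The crucial observation is that the eigenvalues $\lambda_{\ell+1},\dots,\lambda_r$ appear only in the trace term and contribute nothing to the second term, since $\vec{g}^T\vec{v}_j=0$ for $j>\ell$.

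Next I would construct the competitor $\mathcal{A}' = \lambda_1 \sum_{j=1}^\ell \vec{v}_j \vec{v}_j^T$, obtained from $\mathcal{H}_{\tilde{R}}$ by discarding every eigenpair beyond the $\ell$-fold leading eigenvalue. This matrix is symmetric positive semidefinite, and its range equals $\text{span}\{\vec{v}_1,\dots,\vec{v}_\ell\}$, which still contains $\vec{g}$; hence $\mathcal{A}'$ is feasible for~\eqref{quad_rep_opt_prob}. Evaluating the objective gives $f(\mathcal{A}') = \alpha^2 \sum_{j=1}^\ell \lambda_j + \vert \vert \vec{g} \vert \vert_2^2/\lambda_1$, so that $f(\mathcal{H}_{\tilde{R}}) - f(\mathcal{A}') = \alpha^2 \sum_{j=\ell+1}^r \lambda_j$.

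Finally I would close the argument. Since $\alpha>0$ and each $\lambda_j \ge 0$, this difference is nonnegative, and it is strictly positive unless $\lambda_j=0$ for all $j>\ell$. Optimality of $\mathcal{H}_{\tilde{R}}$ forbids a strictly smaller feasible value, forcing $\lambda_{\ell+1}=\cdots=\lambda_r=0$; combined with $\ell \le r$ this yields $r=\ell$ and hence $\lambda_j=0$ for $j>\ell$. I do not anticipate a genuine obstacle here — the only points requiring care are verifying that dropping the trailing eigenpairs preserves feasibility (it does, since $\vec{g}$ never had components along those directions) and that the inequality is strict (guaranteed by $\alpha^2>0$).
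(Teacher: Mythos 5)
Your proof is correct and takes essentially the same approach as the paper: the paper's one-line argument --- that every quantity in \eqref{obj_reduced_1} is non-negative and the eigenvalues $\lambda_{\ell+1},\dots,\lambda_r$ appear only in the trace term, so the minimum forces them to vanish --- is precisely the competitor comparison you spell out. Your version merely makes explicit the feasibility check (that truncating the trailing eigenpairs preserves $\vec{g} \in R(\mathcal{A}')$, via Lemma~\ref{lemma:leading_eval}) which the paper leaves implicit.
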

\begin{proof}
The result follows by observing that all quantities in~\eqref{obj_reduced_1} are non-negative and it is minimized when there are a minimum number of terms in the sum of eigenvalues.
\end{proof}

We may now rewrite $f(\mathcal{H}_{\tilde{R}})$ as
\begin{eqnarray}
\label{obj_reduced_2}
f(\mathcal{H}_{\tilde{R}})=\alpha^2 \ell \lambda_1 + \vert \vert \vec{g} \vert \vert_2^2 \frac{1}{\lambda_1}.
\end{eqnarray}

\begin{lemma}
\label{lemma:leading_eval3}
$\ell=1$ and $\vec{v}_1 = \pm \frac{\vec{g}}{ \vert \vert \vec{g} \vert \vert_2}$.
\end{lemma}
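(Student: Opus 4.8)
The plan is to exploit the reduced objective \eqref{obj_reduced_2}, namely $f(\mathcal{H}_{\tilde{R}})=\alpha^2 \ell \lambda_1 + \vert \vert \vec{g} \vert \vert_2^2 / \lambda_1$, which by Lemmas~\ref{lemma:leading_eval} and~\ref{lemma:leading_eval2} now depends only on the leading eigenvalue $\lambda_1 > 0$ and its algebraic multiplicity $\ell \in \{1,2,\dots,m\}$. I would carry out a nested minimization: first fix $\ell$ and minimize over the continuous variable $\lambda_1$, then minimize the resulting value over the integer $\ell$.

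For fixed $\ell$, the map $\lambda_1 \mapsto \alpha^2 \ell \lambda_1 + \vert \vert \vec{g} \vert \vert_2^2/\lambda_1$ is strictly convex on $(0,\infty)$, so setting its derivative to zero (equivalently, applying the AM--GM inequality) yields the unique minimizer $\lambda_1^\star(\ell) = \vert \vert \vec{g} \vert \vert_2/(\alpha \sqrt{\ell})$ and the corresponding minimum value $2\alpha \vert \vert \vec{g} \vert \vert_2 \sqrt{\ell}$. Since this value is strictly increasing in $\ell$, the minimum over $\ell \in \{1,\dots,m\}$ is attained uniquely at $\ell=1$, which forces $\ell=1$. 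With $\ell=1$, Lemma~\ref{lemma:leading_eval} gives $\vec{g} \in \text{span}\{\vec{v}_1\}$, so $\vec{g} = c\,\vec{v}_1$ for some scalar $c$; because $\vec{v}_1$ is a unit vector, $\vert c \vert = \vert \vert \vec{g} \vert \vert_2$, and hence $\vec{v}_1 = \pm \vec{g}/\vert \vert \vec{g} \vert \vert_2$.

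The main thing to be careful about is that $\ell$ is a discrete parameter, so after the inner continuous minimization I cannot differentiate in $\ell$ but must instead argue monotonicity of $2\alpha \vert \vert \vec{g} \vert \vert_2 \sqrt{\ell}$ directly. I would also note that each candidate value of $\ell$ is admissible, since for any $\ell$ one can construct a feasible $\mathcal{A} \in \mathcal{S}_m^+$ of rank $\ell$ whose range contains $\vec{g}$; this guarantees that the nested minimization ranges over exactly the feasible set and that the infimum is genuinely realized at $\ell=1$. No further estimates are needed, so the remaining work is the routine algebra already sketched above.
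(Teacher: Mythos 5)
Your proposal is correct and takes essentially the same approach as the paper: both minimize the reduced objective \eqref{obj_reduced_2} over $(\ell,\lambda_1)$, conclude $\ell=1$ because the objective grows with $\ell$, and then recover $\vec{v}_1 = \pm \vec{g}/\vert\vert\vec{g}\vert\vert_2$ from the requirement $\vec{g}\in R(\mathcal{H}_{\tilde{R}})$. Your nested minimization---explicitly solving the inner problem to get $\lambda_1^\star(\ell)=\vert\vert\vec{g}\vert\vert_2/(\alpha\sqrt{\ell})$ with value $2\alpha\vert\vert\vec{g}\vert\vert_2\sqrt{\ell}$---is just a more explicit rendering of the paper's one-line monotonicity argument (and it anticipates the paper's subsequent optimization of \eqref{obj_reduced_3}), so no substantive difference remains.
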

\begin{proof}
Since all quantities in \eqref{obj_reduced_2} are nonnegative and $\ell \in \{1,2,\dots,m\}$, then \eqref{obj_reduced_2} will be minimized when $\ell$ is minimized. Taking $\ell=1$ clearly minimizes it and also implies that $\vec{v}_1 = \pm \frac{\vec{g}}{ \vert \vert \vec{g} \vert \vert_2}$ to ensure that $\vec{g} \in R(\mathcal{H}_{\tilde{R}})$.
\end{proof}

Hence, we have established that $\mathcal{H}_{\tilde{R}}$ is given by the rank one matrix
\begin{eqnarray}
\label{opt_A_1}
\mathcal{H}_{\tilde{R}} = \frac{\lambda}{\vert \vert \vec{g} \vert \vert_2^2} \vec{g} \vec{g}^T
\end{eqnarray}
where $\lambda$ is the optimal eigenvalue which minimizes
\begin{eqnarray}
\label{obj_reduced_3}
\alpha^2 \lambda + \vert \vert \vec{g} \vert \vert_2^2 \frac{1}{\lambda}.
\end{eqnarray}
By setting the derivative of \eqref{obj_reduced_3} with respect to $\lambda$ equal to zero and solving, we find that $\mathcal{H}_{\tilde{R}}$ is given by
\begin{eqnarray}
\label{opt_A_2}
\mathcal{H}_{\tilde{R}} = \frac{1}{\alpha \vert \vert \vec{g} \vert \vert_2  } \vec{g} \vec{g}^T.
\end{eqnarray}
Hence, the solution of \eqref{R_update_quad_opt} is \eqref{quad_rep} with $\mathcal{A}$ given by \eqref{opt_A_2} and $C$ given by \eqref{parameterized_C}.

\section*{Theorem~\ref{thm:compare_S_and_tilde_S}}
\begin{proof}
First observe that $\tilde{\mathcal B}=\mathcal B$ since $\tilde{J}=J+\tilde{R}$ and $\tilde{R}$ does not depend on $\vec{\theta}$. Hence, 
\begin{eqnarray*}
\tilde{S}_i = \vert \vert \mathcal P (\mathcal H+\mathcal{H}_{\tilde{R}})^{-1} \mathcal B \vec{e}_i \vert \vert_{W_{\vec{z}}}  \qquad \text{ and } \qquad S_i =  \vert \vert \mathcal P \mathcal H^{-1} \mathcal B \vec{e}_i \vert \vert_{W_{\vec{z}}},
\end{eqnarray*}
where $\mathcal{H}_{\tilde{R}} = \vec{g} \vec{g}^T / (\alpha \vert \vert \vec{g} \vert \vert_2)$ is the Hessian of $\tilde{R}$.

Applying the reverse triangle inequality yields
\begin{align*}
\vert \tilde{S}_i - S_i \vert \le \vert \vert \mathcal P \left( (\mathcal H+\mathcal{H}_{\tilde{R}})^{-1} \mathcal B \vec{e}_i -\mathcal H^{-1} \mathcal B \vec{e}_i \right) \vert \vert_{W_{\vec{z}}} .
\end{align*}
Using the Sherman-Morrison formula for rank one updates and properties of norms we have
\begin{align*}
\vert \tilde{S}_i - S_i \vert & \le \left\vert \left\vert \mathcal P  \frac{1}{1+\frac{\vert \vert \vec{g}\vert \vert_2}{\alpha} \vec{s}^T \mathcal H^{-1} \vec{s} } \mathcal H^{-1}\mathcal{H}_{\tilde{R}}\mathcal H^{-1} \mathcal B \vec{e}_i \right\vert \right\vert_{W_{\vec{z}}} \\
& \le    \frac{1}{1+\frac{\vert \vert \vec{g} \vert \vert_2}{\alpha}\vec{s}^T \mathcal H^{-1} \vec{s} }  \cdot \left\vert \left\vert \mathcal P  \mathcal H^{-1}\mathcal{H}_{\tilde{R}} \right\vert \right\vert_{W_{\vec{z}}} \cdot \left\vert \left\vert  \mathcal H^{-1} \mathcal B \vec{e}_i \right\vert \right\vert_{W_{\vec{z}}}
\end{align*}
Dividing by $\left\vert \left\vert \mathcal H^{-1} \mathcal B \vec{e}_i \right\vert \right\vert_{W_{\vec{z}}}$, plugging in $\mathcal{H}_{\tilde{R}} = \vec{g} \vec{g}^T / (\alpha \vert \vert \vec{g} \vert \vert_2)$, and manipulating constants yields
\begin{align}
\label{bound_1}
\frac{\vert \tilde{S}_i - S_i \vert}{\left\vert \left\vert \mathcal H^{-1} \mathcal B \vec{e}_i \right\vert \right\vert_{W_{\vec{z}}}} & \le \frac{ \vert \vert \vec{g} \vert \vert_2 }{ \alpha + \vec{s}^T \vec{n}} \vert \vert \mathcal P  \mathcal H^{-1} \vec{s} \vec{s}^T \vert \vert_{W_{\vec{z}}} .
\end{align}
Notice that $\vert \vert \mathcal P \mathcal H^{-1} \vec{s} \vec{s}^T \vert \vert_{W_{\vec{z}}} = \vert \vert \mathcal P \mathcal H^{-1} \vec{s} \vert \vert_{W_{\vec{z}}}$ since it is a rank one matrix given by the outer product of $\mathcal P \mathcal H^{-1} \vec{s}$ and $\vec{s}$ with $\vert \vert \vec{s} \vert \vert_2 =1$. Plugging this into \eqref{bound_1} and manipulating constants completes the proof.
\end{proof}

\section*{Theorem~\ref{thm:vary_alpha}}
\begin{proof}
Noting that $\tilde{\mathcal B}(\alpha) \vec{e}_i = \tilde{\mathcal B}(\alpha+\alpha \beta)\vec{e}_i = \mathcal B \vec{e}_i$ for any $\beta \in (-1,1)$, applying the reverse triangle inequality, and writing the Hessian of $\tilde{R}$ as $\mathcal{H}_{\tilde{R}}(\alpha)$ yields
\begin{align*}
\vert \tilde{S}_i(\alpha+\alpha \beta) - \tilde{S}_i(\alpha) \vert \le \left\vert \left\vert \mathcal P \left( \mathcal H + \mathcal{H}_{\tilde{R}}(\alpha (1+\beta)) \right)^{-1} \mathcal B \vec{e}_i - \mathcal P \left( \mathcal H + \mathcal{H}_{\tilde{R}}(\alpha) \right)^{-1} \mathcal B \vec{e}_i \right\vert \right\vert_{W_{\vec{z}}} .
\end{align*}
Following the same arguments as in the proof of Theorem~\ref{thm:compare_S_and_tilde_S} with the Sherman-Morrison formula for rank one updates and properties of norms we have
\begin{align*}
\frac{\vert \tilde{S}_i(\alpha+\alpha \beta) - \tilde{S}_i(\alpha) \vert}{\left\vert \left\vert \mathcal H^{-1} \mathcal B \vec{e}_i \right\vert \right\vert_{W_{\vec{z}}}} & \le \left\vert \frac{1}{\alpha+ \vec{s}^T\vec{n}} - \frac{1}{\alpha(1+\beta) + \vec{s}^T\vec{n}} \right\vert \cdot \frac{1}{\vert \vert \vec{g} \vert \vert_2} \cdot \vert \vert \mathcal P \mathcal H^{-1} \vec{g} \vec{g}^T \vert \vert_{W_{\vec{z}}}
\end{align*}
Then $\vert \vert \mathcal P \mathcal H^{-1} \vec{g} \vec{g}^T \vert \vert_{W_{\vec{z}}} = \vert \vert \vec{g} \vert \vert_2 \cdot \vert \vert \mathcal P \vec{n} \vert \vert_{W_{\vec{z}}}$ since $\vert \vert  \mathcal P \mathcal H^{-1} \vec{s} \vec{s}^T \vert \vert_{W_{\vec{z}}} = \vert \vert \mathcal P  \mathcal H^{-1} \vec{s} \vert \vert_{W_{\vec{z}}}$. 

Hence,
\begin{align*}
\frac{\vert \tilde{S}_i(\alpha+\alpha \beta) - \tilde{S}_i(\alpha) \vert}{\left\vert \left\vert \mathcal H^{-1} \mathcal B \vec{e}_i \right\vert \right\vert_{W_{\vec{z}}}} & \le \left\vert \frac{1}{\alpha+ \vec{s}^T\vec{n}} - \frac{1}{\alpha(1+\beta) + \vec{s}^T\vec{n}} \right\vert \vert \vert \mathcal P \vec{n} \vert \vert_{W_{\vec{z}}} .
\end{align*}
Algebraic manipulations give
\begin{align*}
\frac{\vert \tilde{S}_i(\alpha+\alpha \beta) - \tilde{S}_i(\alpha) \vert}{\left\vert \left\vert \mathcal H^{-1} \mathcal B \vec{e}_i \right\vert \right\vert_{W_{\vec{z}}}} & \le \vert \beta \vert \cdot \frac{\alpha \vert \vert \mathcal P \vec{n} \vert \vert_{W_{\vec{z}}}}{(\alpha+\vec{s}^T\vec{n)}(\alpha(1+\beta)+\vec{s}^T\vec{n})}.
\end{align*}
Noting that $\vec{s}^T\vec{n}>0$ so $\alpha / (\alpha + \vec{s}^T\vec{n}) <1$ completes the proof.
\end{proof}

\section*{Theorem~\ref{thm:eig_compare}}
\begin{proof}
$(\rightarrow)$ Assume that $\mathcal H_M+\mathcal H_R$ is positive definite and let $\mathcal H_M \vec{v}_j = \lambda_j \mathcal H_R \vec{v}_j$, $j=1,2,\dots,m$. Without loss of generality, assume that $\vec{v}_j^T \mathcal H_R \vec{v}_j=1$, $j=1,2,\dots,m$. Then
$$0 < \vec{v}_j^T (\mathcal H_M+\mathcal H_R)\vec{v}_j = \vec{v}_j^T(\lambda_j \mathcal H_R\vec{v}_j + \mathcal H_R\vec{v}_j) = (\lambda_j+1)\vec{v}_j^T \mathcal H_R \vec{v}_j = \lambda_j+1.$$ 
$(\leftarrow)$ Assume that $\mathcal H_M \vec{v}_j = \lambda_j \mathcal H_R \vec{v}_j$ with $\lambda_j > -1$, $j=1,2,\dots,m$. Let $\vec{x} \in \mathbb R^m, \vec{x} \ne 0$. It is sufficient to show that $\vec{x}^T(\mathcal H_M+\mathcal H_R)\vec{x} > 0$. Since $\{\vec{v}_j\}_{j=1}^m$ forms a $\mathcal H_R$ orthonormal basis for $\mathbb R^m$ we can write $\vec{x} = \sum_{j=1}^m (\vec{x}^T \mathcal H_R \vec{v}_j) \vec{v}_j.$
Then we have
\begin{align*}
\vec{x}^T(\mathcal H_M+\mathcal H_R)\vec{x} & = \left( \sum\limits_{j=1}^m (\vec{x}^T \mathcal H_R \vec{v}_j) \vec{v}_j \right)^T (\mathcal H_M+\mathcal H_R) \left(\sum\limits_{k=1}^m (\vec{x}^T \mathcal H_R \vec{v}_k) \vec{v}_k \right) \\
& =  \left( \sum\limits_{j=1}^m (\vec{x}^T \mathcal H_R \vec{v}_j) \vec{v}_j \right)^T  \left( \sum\limits_{k=1}^m (\vec{x}^T \mathcal H_R \vec{v}_k) (\mathcal H_M\vec{v}_k + \mathcal H_R\vec{v}_k) \right) \\
& =  \left( \sum\limits_{j=1}^m (\vec{x}^T \mathcal H_R \vec{v}_j) \vec{v}_j \right)^T  \left( \sum\limits_{k=1}^m (\vec{x}^T \mathcal H_R \vec{v}_k) (\lambda_k+1)\mathcal H_R\vec{v}_k \right) \\
& =  \sum\limits_{j=1}^m (\vec{x}^T \mathcal H_R \vec{v}_j)^2 (\lambda_j+1) \\
& > 0
\end{align*}
since $\lambda_j+1 > 0$ for all $j=1,2,\dots,m$.
\end{proof}

\section*{Theorem~\ref{thm:gen_reg_update}}
\begin{proof}
For $j\ne i$ we have,
\begin{eqnarray*}
(\mathcal H_M+\mathcal U_i(\delta))\vec{v}_j = \mathcal H_M\vec{v}_j + \delta \mathcal H_R \vec{v}_i \vec{v}_i^T \mathcal H_R \vec{v}_j = \lambda_j \mathcal H_R \vec{v}_j + \delta \mathcal H_R \vec{v}_i (0) =  \lambda_j \mathcal H_R \vec{v}_j
\end{eqnarray*}
since $\{\vec{v}_j\}_{j=1}^m$ are $\mathcal H_R$ orthogonal (a consequence of the spectral theorem for symmetric matrices). As for $\vec{v}_i$, consider 
\begin{eqnarray*}
(\mathcal H_M+\mathcal U_i(\delta))\vec{v}_i = \mathcal H_M\vec{v}_i + \delta \mathcal H_R \vec{v}_i \vec{v}_i^T \mathcal H_R \vec{v}_i = \lambda_i \mathcal H_R \vec{v}_i + \delta \mathcal H_R \vec{v}_i (1) =  (\lambda_m+\delta) \mathcal H_R \vec{v}_i.
\end{eqnarray*}
\hspace{1 mm}
\end{proof}

\bibliographystyle{siam}
\bibliography{dasco}

\end{document}